\documentclass[12pt,reqno]{amsart}
\usepackage{latexsym, amsmath, amssymb,amsthm,amsopn,amsfonts}
\usepackage{version}
\usepackage{epsfig,graphics,color,graphicx,graphpap}
\usepackage{amssymb}
\usepackage{xcolor}
\usepackage[hidelinks]{hyperref}
\hypersetup{
    colorlinks,
    linkcolor={blue!80!black},
    citecolor={blue!80!black},
    urlcolor={blue!80!black}
}
\ifx\pdfoutput\undefined
  \DeclareGraphicsExtensions{.eps}
\else
  \ifx\pdfoutput\relax
    \DeclareGraphicsExtensions{.eps}
  \else
    \ifnum\pdfoutput>0
      \DeclareGraphicsExtensions{.pdf}
    \else
      \DeclareGraphicsExtensions{.eps}
    \fi
  \fi
\fi

\setlength{\textheight}{8in} \setlength{\oddsidemargin}{0.0in}
\setlength{\evensidemargin}{0.0in} \setlength{\textwidth}{6.4in}
\setlength{\topmargin}{0.18in} \setlength{\headheight}{0.18in}
\setlength{\marginparwidth}{1.0in}
\setlength{\abovedisplayskip}{0.2in}

\setlength{\belowdisplayskip}{0.2in}

\setlength{\parskip}{0.05in}

\pagestyle{headings}

\newcommand{\RR}{{\mathbb R}}

\renewcommand{\Re}{\mathop{\rm Re}\nolimits}
\renewcommand{\Im}{\mathop{\rm Im}\nolimits}

\newcommand{\norm}[1]{\left\Vert#1\right\Vert}
\newcommand{\abs}[1]{\left\lvert#1\right\rvert}

\theoremstyle{plain}

\newtheorem{theorem}{Theorem}
\newtheorem{proposition}{Proposition}[section]
\newtheorem{cor}[proposition]{Corollary}
\newtheorem{lemma}[proposition]{Lemma}
\newtheorem{rmk}[proposition]{Remark}

\theoremstyle{definition}

\numberwithin{equation}{section}
\newcommand{\thmref}[1]{Theorem~\ref{#1}}
\newcommand{\thmsref}[2]{Theorems~\ref{#1} and \ref{#2}}
\newcommand{\secref}[1]{Section~\ref{#1}}
\newcommand{\lemref}[1]{Lemma~\ref{#1}}

\newcommand{\propref}[1]{Proposition~\ref{#1}}

\def\squarebox#1{\hbox to #1{\hfill\vbox to #1{\vfill}}}


\newcommand{\la}{\langle}
\newcommand{\ra}{\rangle}

\usepackage{amsxtra}


\newcommand{\R}{\mathbb{R}}
\newcommand{\Z}{\mathbb{Z}}
\newcommand{\C}{\mathbb{C}}
\newcommand{\ve}{\varepsilon}

\newcommand{\Imag}{\operatorname{Im\,}}
\newcommand{\lp}{\left(}
\newcommand{\rp}{\right)}

\newcommand{\wt}{\widetilde}
\newcommand{\Hone}{H^{(1)}}
\newcommand{\Htwo}{H^{(2)}}
\newcommand{\II}{I\!\!I}
\newcommand{\lms}{L^{2,-\sigma}}
\newcommand{\ls}{L^{2,\sigma}}
\newcommand{\lpn}[2]{ \norm{#1}_{L^{#2}}}
\newcommand{\ltwo}[1]{\lpn{#1}{2}}

\definecolor{bpurple}{RGB}{200,50,200}

\usepackage{dsfont}
\usepackage{mathrsfs}

\title
[Dispersive estimates on product cones]
{Pointwise dispersive estimates for Schr\"odinger operators on product cones}  

\author[B. Keeler]{Blake Keeler}
\email{\href{mailto:bkeeler2015@gmail.com}{bkeeler2015@gmail.com}}
\address{Department of Mathematics and Statistics, McGill University\\ Montreal, QC}

\author[J.L. Marzuola]{Jeremy L. Marzuola}
\email{\href{mailto:marzuola@math.unc.edu}{marzuola@math.unc.edu}}
\address{Department of Mathematics, UNC-Chapel Hill \\ CB\#3250
  Phillips Hall \\ Chapel Hill, NC 27599}

\begin{document}

\begin{abstract}
In this manuscript, we investigate the dispersive properties of solutions to the Schr\"odinger equation with a weakly decaying radial potential on cones. If the potential has sufficient polynomial decay at infinity, we obtain a variety of results on the perturbed conic resolvent operator $R_V$ and the nature of the continuous spectrum of $-\Delta +V$. Using these results, we are able to show that the Schr\"odinger flow on each eigenspace of the link manifold satisfies a weighted $L^1\to L^\infty$ dispersive estimate. In odd dimensions, the decay rate we compute is consistent with that of the Schr\"odinger equation in a Euclidean space of the same dimension, but the spatial weights reflect the more complicated regularity issues in frequency that we face in the form of the spectral measure. In even dimensions, we prove a similar estimate, but with a loss of $t^{1/2}$ compared to the sharp Euclidean estimate. 
\end{abstract}
   
\maketitle 

\section{Introduction}
\label{intro}

Let $(X,h)$ be a smooth, compact Riemannian manifold of dimension $n-1$, and consider the cone on $X$, denoted $C(X)$ and defined as $\R^+\times X$ with metric $g$ given by $g = dr^2 + r^2h.$ The corresponding Laplace operator on $C(X)$ is given by
\[\Delta_{C(X)} = \partial_r^2 + \frac{n-1}{r}\partial_r + \frac{1}{r^2}\Delta_h,\]
where $\Delta_h$ is the Laplacian on $X$, taken with the negative semidefinite sign convention. We take $\Delta_{C(X)}$ with the Friedrich's extension for simplicity. We are interested in dispersive estimates for the Schr\"odinger flow 
\begin{equation}\label{schrodinger_flow}
e^{itH}P_c,\quad H = -\Delta_{C(X)} + V,
\end{equation}
where $P_c$ denotes projection onto the continuous spectrum of $H.$ Here, we assume that $V$ is a real-valued radial potential satisfying certain decay assumptions at infinity.

Besides giving direct insight into the behavior of waves, dispersive bounds also have interesting applications in nonlinear problems. For example, stability questions around static solutions in nonlinear models such as wave maps have been studied using dispersive decay estimates. See the work of Krieger-Schlag \cite{krieger2008renormalization} and more recently Krieger-Miao-Schlag \cite{krieger2020stability} for instance.   See also the many works of Lawrie-Oh-Shahshahani \cite{lawrie2018local,lawrie2019asymptotic,lawrie2016cauchy,lawrie2016gap,lawrie2016profile,lawrie2017stability} for treatment of geometric wave and Schr\"odinger equations in hyperbolic space.  Pointwise decay estimates also play a role in obtaining enhanced existence times using normal form methods, see for instance recent works of Ifrim-Tataru \cite{ifrim2015global} and Germain-Pusateri-Rousset \cite{germain2018nonlinear}.   It is also an intrinsically interesting question to understand the interaction between a background potential and diffraction in order to better characterize the dynamics of waves on manifolds with conic singularities. Conic manifolds have arisen naturally in the work of Hintz-Vasy and Hafner-Hintz-Vasy on general relativity, see \cite{hafner2019linear,hintz2015semilinear,hintz2018global} and in particular the recent discussion in the work of Hintz \cite{hintz2020resolvents}.

Dispersive behavior of Schr\"odinger flows has been studied in a tremendous variety of geometric settings and under many different conditions on the asymptotic decay and regularity properties of the potential $V$. In $\R^n$, some of the first ideas arose in the seminal paper of Journ\'e-Soffer-Sogge \cite{journe1991decay}, who proved dispersive decay for $n \geq 3$ with potentials that had no zero energy eigenvalues or resonances and were somewhat strongly decaying and regular.  Since then, decay estimates have been improved in a variety of settings.  Early works by Goldberg and collaborators carefully addressed the regularity required of the potential in higher dimensions and decay rates in $3$ dimensions in the absence of embedded resonance and eigenvalues, see \cite{beceanu2012schrodinger,goldberg2006dispersive,goldberg2006dispersivergh,goldberg2004dispersive,goldberg2006counterexample}.  

Further works for perturbations of the Euclidean Laplacian have extended dispersive decay results to the setting where $-\Delta +V$ has an embedded resonance at zero energy, which results in a weaker decay estimate in time, see for instance especially the works of Erdogan-Schlag in $3$ dimensions
\cite{erdougan2010dispersive,ES1}, Erdogan-Green in two dimensions \cite{erdougan2013dispersive,erdougan2013weighted}, Green in $5$ dimensions \cite{green2012dispersive}, as well as Goldberg-Green and Erdogan-Goldberg-Green in odd and even dimensions $\geq 4$ \cite{erdougan2014dispersive,goldberg2014dispersive,goldberg2015dispersive,goldberg2016lp}.  Recent progress by Blair-Sire-Sogge \cite{blair2019quasimode} has pushed the construction of the spectral measure for $-\Delta + V$ to cases where the regularity of the potential $V$ is at very critical levels, though the authors have not explored dispersive decay directly. This is by no means an exhaustive list, but these results are representative of the techniques involved, namely careful control of the free resolvent, the use of resolvent expansions, the role of the regularity of the potential $V$, and the spectral structure of the operator $-\Delta + V$.  The survey article by Wilhelm Schlag \cite{schlag2007dispersive} contains an excellent overview of the key ideas involved.  

Dispersive decay estimates have also been studied in several other geometries. For example, Schr\"odinger operators with potential were studied on hyperbolic space by David Borthwick and the second author in \cite{borthwick2015dispersive}.  See the recent article of Bouclet \cite{bouclet2018sharp} for a broad overview of results on the asymptotically Euclidean setting, the article by Hassell-Zhang \cite{hassell2016global} and references therein for results on asymptotically conic manifolds, as well as the articles of Schlag-Soffer-Staubach \cite{schlag2010decay1,schlag2010decay2} for manifolds with conical ends.  Analysis of the Laplacian on product cones is related to the analysis of Schr\"odinger operators on $\mathbb{R}^n$ with an inverse square potential which have been studied in various settings, e.g. the works \cite{killip2015energy,miao2013strichartz,miao2014maximal,mizutani2019uniform,zhang2014scattering} by various authors.

The study of the Laplacian on product cones has a rich history.  See the classical results of Cheeger-Taylor, 
\cite{Cheeger1979,cheeger1982diffraction}, where the spectral measure was first described.  As a result, there have been several works that studied evolution equations and their decay estimates on product cones, especially wave equations \cite{baskin2019scattering,blair2012strichartz,blair2013strichartz,Ford2010,ford2015wave,melrose2004propagation,zhang2018strichartz}. See also \cite{baskin2016locating} for information about scattering resonances on hyperbolic cones. 

Analysis of dispersive estimates for Schr\"odinger equations using the resolvent and spectral measure on a product cone has been studied in the recent results of Zhang-Zheng \cite{zhang2017globalcone,zhang2017global}.  These are the most closely related results to ours, but only study specific types of potentials that can be treated more perturbatively, hence they need not fully explore the regularity and decay of the spectral measure in the same fashion undertaken here.  See also the very recent work of Chen \cite{chen2020semiclassical} that studies local dispersive behavior on manifolds with non-product conic singularities.

On pure product cones, we prove pointwise decay estimates for the mode-by-mode decomposition of the Schr\"odinger flow \eqref{schrodinger_flow}. By this, we mean that if $\{\varphi_j\}_{j=0}^\infty$ is a basis of $L^2(X)$ consisting of eigenfunctions of $\Delta_h$, then the Schr\"odinger flow on $C(X)$ can be formally decomposed as 
\begin{equation}\label{modebymode}
e^{itH}P_c = \sum\limits_{j=0}^\infty e^{it H}P_c E_j,
\end{equation}
where $E_j:L^2(X)\to L^2(X)$ denotes projection on to the linear span of $\varphi_j.$ We show that if $V\in \rho^{-2\sigma} L^\infty(\R^+)$ for $\sigma$ sufficiently large, where $\rho(r) = 1 + r$ is a weight function, and if the perturbed resolvent 
\[R_V(z^2):= (-\Delta_{C(X)} +V - z^2)^{-1}\]
does not have a pole at $z = 0$, then each component of \eqref{modebymode} satisfies a weighted pointwise estimate. In odd dimensions, we prove this with the same $t^{-\tfrac{n}{2}}$ decay rate as in the Euclidean case, while in even dimensions, there is a loss of $t^{\tfrac{1}{2}}$ which we do not expect to be sharp. The significance of the resolvent can be seen quite directly if we express the Schr\"odinger flow in terms of the continuous part of spectral measure for $-\Delta_{C(X)} + V$, which we denote by $d\Pi_V$. In particular, if we assume for the moment that there are no resonances or eigenvalues embedded in the continuous spectrum, then we have
\[e^{it H}P_c = \int\limits_0^\infty e^{it\mu}\,d\Pi_V(\mu).\]
By Stone's formula and a change of variables from $\mu$ to $\lambda^2$, we can rewrite the spectral measure in terms of the boundary values of the resolvent via 
\[d\Pi_V(\lambda) = \frac{\lambda}{2\pi i}\left[R_V(\lambda^2+ i0) - R_V(\lambda^2-i0)\right]\,d\lambda.\]
The behavior of the resolvent is thus of critical importance for understanding the properties of the Schr\"odinger flow. Hence, a large portion of this manuscript is dedicated to analyzing the structure of $R_V(z^2)$, or more specifically its projections $R_{V,j}(z^2) = R_V(z^2)E_j.$ Our first main result establishes that each $R_{V,j}$ admits a meromorphic continuation to the logarithmic cover of $\C\setminus\{0\}$ and satisfies a version of the limiting absorption principle.

\begin{theorem}\label{LAP}
For $V\in \rho^{-2\sigma}L^{\infty}(\R^+)$ with $\sigma > \frac{1}{2},$ $R_{V,j}(z^2)$ admits a meromorphic continuation to the logarithmic cover of $\C\setminus\{0\}$. In the region where $\Imag(z^2) > 0$, we have that 
\[R_{V,j}(z^2):L^{2,\delta}(\R^+,r^{n-1}\,dr)\to L^{2,-\delta}(\R^+,r^{n-1}\,dr)\]
is a bounded operator for all $\frac{1}{2} < \delta < \sigma.$ Here, the notation $L^{2,\delta}(\R^+,r^{n-1}\,dr)$ denotes the weighted $L^2$-space given by $\{f:\R^+\to\C: \int |f(r)|^2\rho(r)^{2\delta}\,r^{n-1}\,dr < \infty\}.$ 

Furthermore, if $\sigma > \frac{1}{2} + k$, then the derivatives of $R_{V,j}$ up to order $k$ satisfy the limiting absorption principle. That is, for $0 \le \ell \le k$, there exists an $M_V > 0$ such that
\begin{equation}
\partial_\lambda^\ell R_{V,j}(\lambda^2 \pm i0): L^{2,\delta}(\R^+,r^{n-1}\,dr)\to L^{2,-\delta}(\R^+,r^{n-1}\,dr)
\end{equation}
is a bounded operator for all $\lambda \ge M_V$ and all $\frac{1}{2} + k <\delta < \sigma$. In particular, we have the operator bound 
\begin{equation}\label{LAP_eqn}
\left\|\partial_\lambda^\ell R_{V,j}(\lambda^2 \pm i0)\right\|_{L^{2,\delta}\to L^{2,-\delta}}\le \frac{C_{j,\ell}}{\lambda}
\end{equation}
for each $0 \le \ell \le k, $ for some $C_{j,\ell} > 0$ and all $\lambda \ge M_V .$
\end{theorem}

\noindent We note that \eqref{LAP_eqn} implies that $-\Delta_{C(X)}+V$ does not have any embedded resonances in the interval $[M_V,\infty)$, but analysis of the Schr\"odinger flow requires information about the spectrum at low energy as well. The next theorem handles this by showing that indeed there are no embedded eigenvalues or resonances in $(0,\infty).$ In fact, for this theorem, we do not need $V$ to be a radial potential, since the arguments involved do not rely as heavily on the conic structure.

\begin{theorem}\label{absence.thm}
For $V\in \rho^{-2\sigma}L^{\infty}(C(X))$ with $\sigma > \frac{1}{2}$, then $-\Delta_{C(X)} + V$ has continuous spectrum $[0,\infty)$, with no embedded eigenvalues or resonances in the range $(0,\infty)$.
\end{theorem}

\noindent This theorem will be used implicitly throughout this manuscript, but we postpone its proof until Appendix \ref{app:embres}, since the techniques involved in the proof are quite distinct from those used elsewhere in the argument.

In order to analyze the Schr\"odinger flow, we also require estimates on the behavior of the resolvent at low energy. For this we must assume that $-\Delta_{C(X)} + V$ does not have a resonance at zero. With this assumption, we obtain the following refinements in our operator bounds from \thmref{LAP}.

\begin{theorem}\label{low_energy_thm}
Suppose $V\in\rho^{-2\sigma}L^{\infty}(\R^+)$ with $\sigma > \frac{1}{2} + k$, and assume that $-\Delta_{C(X)} + V$ does not have a resonance at zero energy. Then, \eqref{LAP_eqn} can be improved to 
\begin{equation}\label{eqn:RV_L2bd}
\left\|\partial_\lambda^\ell R_{V,j}(\lambda^2 \pm i0)\right\|_{L^{2,\delta}\to L^{2,-\delta}} \le \frac{C_{j,\ell}}{\langle \lambda\rangle} \quad \text{for all }\lambda \ge 0
\end{equation}
for $\frac{1}{2} + k < \delta < \sigma$ and $0\le \ell\le k$. Under the stronger hypothesis that $\frac{n}{2} + k\le \delta < \sigma$, we also have that the imaginary part of $R_{V,j}$ satisfies
\begin{equation}\label{RV_imaginarypart}
\left\|\partial_\lambda^\ell \Imag R_{V,j}(\lambda^2 \pm i0)\right\|_{L^{2,\delta}\to L^{2,-\delta}} \le C_{j,\ell}' \lambda^{n-2-k} \quad \text{for all }\lambda\in [0,1],
\end{equation}
for some $C_{j,\ell}' > 0$ and each $0\le \ell\le k.$
\end{theorem}

\noindent By combining these mapping properties with the behavior of the free resolvent $R_0(z^2): = (-\Delta_{C(X)} - z^2)^{-1}$, we are able to establish weighted pointwise estimates on the Schwartz kernel of $R_{V,j}(\lambda^2 \pm i0)$, from which we can obtain our primary result on the time-decay rate of the Schr\"odinger flow.

\begin{theorem}\label{disp_est}
Suppose $C(X)$ is of odd dimension $n\ge 3$. Let $V\in \rho^{-2\sigma}L^\infty(\R^+) $ with
\[\sigma > 2n\left\lceil\frac{n}{4}\right\rceil.\]
 If $R_V(z^2)$ does not have a pole at $z = 0$, then for any integer $j \geq 0$ and  $\alpha \ge 2\left\lceil\frac{n}{4}\right\rceil(n-2)- \frac{n-1}{2}+2$, we have 
\begin{equation}\label{main_disp_est}
\|\rho^{-\alpha} e^{itH}P_c E_jf\|_{L^\infty(\R^+)} \le C_{j,\alpha,\sigma} t^{-\frac{n}{2}}\|\rho^{\alpha}E_jf\|_{L^1(\R^+,r^{n-1}\,dr)},
\end{equation}
for some $C_{j,\alpha,\sigma} > 0.$
\end{theorem}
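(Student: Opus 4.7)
\emph{Setup.} My plan is to reduce the bound to an oscillatory integral in the spectral parameter via Stone's formula, then expand $R_V$ in a finite Born series whose terms are controlled using the explicit Bessel-function structure of the free resolvent on each eigenspace. Since $V$ is radial, $H$ commutes with each $E_j$, so on the $j$-th eigenspace the Schr\"odinger operator reduces to a one-dimensional Bessel-type operator
\[
L_j = -\partial_r^2 - \frac{n-1}{r}\partial_r + \frac{\mu_j^2}{r^2} + V(r)
\]
on $L^2(\R^+, r^{n-1}dr)$, where $-\mu_j^2$ denotes the eigenvalue of $\Delta_h$ associated to $\varphi_j$. Stone's formula yields
\[
e^{itH}P_c E_j f(r) = \frac{1}{\pi i}\int_0^\infty e^{itz^2} z\bigl[R_V(z+i0) - R_V(z-i0)\bigr]E_jf(r)\,dz,
\]
so the task becomes to estimate this oscillatory integral in $L^\infty(\R^+)$ uniformly after multiplication by $\rho^{-\alpha}$.

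\emph{Born expansion and the free contribution.} I iterate the resolvent identity $R_V = R_0 - R_0VR_V$ to obtain
\[
R_V = \sum_{k=0}^{N-1}(-1)^k R_0(VR_0)^k + (-1)^N R_V(VR_0)^N,
\]
with $N$ on the order of $\lceil n/4\rceil$; this choice, combined with the hypothesis that $R_V(z)$ has no pole at $z=0$, is what will eventually close the remainder on a suitably weighted space. The leading term $R_0$ on the $j$-th mode has, by Cheeger--Taylor, a spectral measure of the form
\[
d\Pi_j^{(0)}(z;r,r') = c_j\, z\, J_{\nu_j}(zr)J_{\nu_j}(zr')\,dz, \qquad \nu_j = \sqrt{\mu_j^2 + \bigl(\tfrac{n-2}{2}\bigr)^2}.
\]
Splitting the $z$-integral with a smooth cutoff at $z\sim 1$, I apply stationary phase in the high-frequency region after inserting the WKB expansion of $J_{\nu_j}$: the phase $tz^2\pm z(r\pm r')$ has a single nondegenerate critical point and produces the Euclidean factor $t^{-n/2}$, while any polynomial losses in $r,r'$ are absorbed into the weights $\rho^{\pm\alpha}$. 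In the low-frequency region the series $J_{\nu_j}(zr)\sim (zr)^{\nu_j}$ forces the integrand to vanish at $z=0$, and the odd-dimension hypothesis ensures $\nu_j$ avoids the integer indices where $Y_\nu$ develops logarithmic terms, so repeated integrations by parts in $z$ go through cleanly and give the same $t^{-n/2}$ rate.

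\emph{Higher Born terms and the main difficulty.} Each term $R_0(VR_0)^k$ appears as a $(k+1)$-fold iterated spatial integral containing $k$ factors of $V$; pairing these against the weighted pointwise bound on the free kernel obtained in the previous step, the decay $|V|\lesssim\rho^{-2\sigma}$ yields an integrable pairing $\rho^{\alpha}\cdot\rho^{-2\sigma}\cdot\rho^{\alpha}$ with respect to $r^{n-1}dr$ provided $\sigma$ exceeds a threshold that grows linearly with $k$. The choices $N=\lceil n/4\rceil$ and $\sigma>2n\lceil n/4\rceil$ in the hypotheses are precisely what allows all $k\le N$ iterates to close, after which the absence of a zero-energy pole for $R_V$ lets me invert $I+R_0V$ on the same weighted space to handle the remainder $R_V(VR_0)^N$. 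The genuine obstacle is the low-frequency analysis: extracting the full $t^{-n/2}$ decay demands roughly $n/2$ integrations by parts in $z$ against the jump of the perturbed spectral measure, and every derivative amplifies the singular behavior of $R_V(z\pm i0)$ near zero. Tracking precisely how many such derivatives the weighted spaces will tolerate, and how much spatial weight is consumed at each Born iteration, is what produces the explicit thresholds $\alpha\ge 2\lceil n/4\rceil(n-2)-\tfrac{n-1}{2}+2$ and $\sigma>2n\lceil n/4\rceil$ appearing in Theorem~\ref{disp_est}.
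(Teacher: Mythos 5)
Your overall scaffolding (Stone's formula, Birman--Schwinger expansion, low/high frequency split, exploiting the absence of a zero-energy pole) is aligned with the paper's strategy, but two of the central mechanisms are misidentified in a way that would derail the argument.

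First, the role of odd dimension is not what you describe. You claim that oddness of $n$ ``ensures $\nu_j$ avoids the integer indices where $Y_\nu$ develops logarithmic terms,'' but the orders $\nu_j=\sqrt{\mu_j^2+((n-2)/2)^2}$ are determined by the eigenvalues of $\Delta_h$ and are generically irrational regardless of the parity of $n$; moreover $\Imag R_{0,j}$ involves only $J_{\nu_j}$, not $Y_{\nu_j}$, so logarithmic singularities never enter. What actually happens in the paper is that the low-frequency part of the spectral integral is integrated by parts $N$ times using the operator $\frac{1}{2it\lambda}\partial_\lambda$, and the surviving terms carry an overall power $\lambda^{n-1-2N}\cdot\chi(\lambda)$ coming from combining the $\lambda^{1-2N+k}$ produced by integration by parts with the $\lambda^{n-2-k}$ vanishing rate of $\partial_\lambda^k\Imag R_{V,j}$ near $\lambda=0$. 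Taking $N=\frac{n-1}{2}$ makes this $\lambda^0$, a smooth function, so the Fourier transform is in $L^1$; oddness of $n$ is needed precisely so that $\frac{n-1}{2}$ is an integer. This is also where the full rate $t^{-n/2}$ arises: $t^{-N}=t^{-(n-1)/2}$ from the integrations by parts, plus $t^{-1/2}$ from the one-dimensional $L^1\to L^\infty$ dispersive estimate applied to the residual smooth, compactly supported integrand. Your ``single nondegenerate critical point produces the Euclidean factor $t^{-n/2}$'' in a one-dimensional $z$-integral is not correct as stated---stationary phase in one variable yields $t^{-1/2}$, and the remaining powers of $t$ have to be extracted from the $\lambda$-regularity of the spectral density.

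Second, the workhorse estimate you would need is a genuinely pointwise (in $r,s$) bound, with spatial weights, on $\rho^{-\alpha}(r)\,\partial_\lambda^k\Imag R_{V,j}(\lambda\pm i0;r,s)\,\rho^{-\alpha}(s)$ for $k$ up to $2\lceil n/4\rceil(n-2)+2$, not merely weighted-$L^2$ mapping bounds. The paper obtains this (Proposition~\ref{RV_pointwise}) by using the \emph{symmetric} Birman--Schwinger expansion $R_{V,j}=\sum_{\ell=0}^{2M-1}R_{0,j}(-VR_{0,j})^\ell+[R_{0,j}V]^M R_{V,j}[VR_{0,j}]^M$ with $M=\lceil n/4\rceil$, so the remainder can be written as an $L^2$ pairing $\langle(\rho^{-\sigma}R_{V,j}\rho^{-\sigma})A(\cdot,s),\overline{A(\cdot,r)}\rangle$ with $A$ built from $M$ free resolvent factors on each side; the $L^2\to L^2$ bound on the middle $R_{V,j}$ (Proposition~\ref{prop:RV_imaginarypart}) then combines with iterated Young-type $L^q$ bounds on the free factors (Lemma~\ref{young}, Propositions~\ref{Lq_imaginary}, \ref{Lq_nonimaginary}). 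Your one-sided remainder $R_V(VR_0)^N$ does not admit this pairing structure, and a mapping-norm bound alone does not give the pointwise control in $r,s$ that the dispersive estimate requires. These two gaps---the incorrect source of the $t^{-n/2}$ rate and odd-dimension condition, and the absence of the symmetric expansion plus pointwise $\lambda$-derivative bounds on $\Imag R_{V,j}$---would need to be fixed before the proof closes.
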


We do not claim that the lower bound on the exponent $\alpha$ in the spatial weights is optimal, but these weights are required to obtain \thmref{disp_est} from the techniques used in this article. In particular, the weights are needed to counteract certain regularity issues which arise when differentiating the resolvent with respect to $\lambda$. See Remark \ref{optrmk} for additional details. Furthermore, the dependence on $j$ appears as a consequence of the fact that the pointwise bounds we establish on each $R_{V,j}$ are not necessarily summable in $j$. Similar weighted mode-by-mode estimates are obtained in the works of Schlag-Soffer-Staubach \cite{schlag2010decay1,schlag2010decay2} in the case of surfaces of revolution and related mode by mode decay rates were established for the wave equation on the Schwarzschild space-time in Donninger-Schlag-Soffer \cite{donninger2012pointwise}.

\begin{rmk}\textnormal{In the case where $n$ is even, the techniques of this article give a slightly weaker estimate of the form 
\begin{equation}\label{disp_est_even}
\|\rho^{-\alpha} e^{itH}P_c E_jf\|_{L^\infty(\R^+)} \le C_{j,\alpha,\sigma} t^{-\frac{n-1}{2}}\|\rho^{\alpha}E_jf\|_{L^1(\R^+,r^{n-1}\,dr)},
\end{equation}
for analogous conditions on $V$ and $\alpha,$ where the loss of the $\frac{1}{2}$ power of decay in $t$ arises as a result of regularity issues encountered in the analysis of the spectral measure near zero energy. We expect that with more sophisticated techniques it may be possible to improve this estimate to give the full $t^{-\frac{n}{2}}$ decay rate exhibited in $\R^n$. }
\end{rmk}

\subsection{Outline of the Paper}

In Section \ref{free}, we summarize known facts regarding the form of the free resolvent $R_0(z^2)$. The results presented are primarily taken from \cite{baskin2019scattering}, which draws heavily upon the seminal work of \cite{cheeger1982diffraction}. In Section \ref{R0_estimates}, we prove operator bounds on the free resolvent using properties of Bessel and Hankel functions. These bounds include a limiting absorption principle for the projections $R_{0,j}:= R_{0}E_j$. Section \ref{spectheory} combines these bounds on the free resolvent with perturbation theory arguments to prove \thmsref{LAP}{low_energy_thm}. Then, in Section \ref{spectralresolution}, we prove weighted pointwise bounds on the perturbed resolvent kernel using a Birman-Schwinger expansion along with the previous operator estimates.  Finally, in Section \ref{dispersive}, we use the representation of the spectral measure in terms of the resolvent combined with the pointwise resolvent bounds to establish Theorem \ref{disp_est}. We also provide three appendices at the end of the paper. Appendix \ref{app:embres} contains the proof of \thmref{absence.thm}, which demonstrates the absence of embedded eigenvalues and resonances. This fact is of critical importance throughout the paper. For the benefit of the reader, Appendix \ref{app:free_res} provides a full derivation of the free resolvent formula presented in \secref{free}. This derivation largely follows the work of \cite{baskin2019scattering}, but we provide some additional clarifying details. Finally, Appendix \ref{ell2_dispersive} uses ideas from \cite{Ford2010} to give a modified dispersive estimate for the free Schr\"odinger flow which is both unweighted and not restricted to individual eigenspaces of the link manifold. 

\subsection*{Acknowledgements}  BK is supported by DMS 1900519 and Sloan Fellowship through his advisor Yaiza Canzani.  JLM was supported in part by NSF CAREER Grant DMS-1352353 and NSF Applied Math Grant DMS-1909035.  JLM also thanks Duke University and MSRI for hosting him during the outset of this research project. The authors would like to thank Dean Baskin, David Borthwick, Yaiza Canzani, Michael Goldberg, Andrew Hassell and Jason Metcalfe for very helpful discussions about resolvent estimates on conic manifolds and pointwise estimates in general.   The authors thank the anonymous referee who reviewed the first version of this manuscript for many valuable comments that led a helpful reorganization of the results and several aspects of the main theorems being clarified.

\section{The Free Resolvent}
\label{free}

\noindent In this section, we outline some key facts about the kernel of the free resolvent operator
\begin{equation}
R_0(z^2) = (-\Delta_{C(X)} - z^2)^{-1}: L^2(C(X))\to L^2(C(X)),
\end{equation}
for complex $z$. This is equivalent to analyzing solutions of the equation
\begin{equation}\label{free_resolvent}
(-\Delta_{C(X)} - z^2)u = f
\end{equation}
for $f\in L^2(C(X)).$ To proceed, we decompose $u$ and $f$ into the basis $\{\varphi_j\}$ of eigenfunctions on $X$ as
$$f(r,\theta) = \sum\limits_{j=1}^\infty f_j(r)\varphi_j(\theta), \quad u(r,\theta) = \sum\limits_{j=1}^\infty u_j(r)\varphi_j(\theta).$$ 
Denote by $-\mu_j^2$ the eigenvalues of $\Delta_h$ associated to each $\varphi_j$. Then, we obtain that \eqref{free_resolvent} is equivalent to the collection of equations
\begin{equation}\lp\partial_r^2 + \frac{n-1}{r}\partial_r +z^2 - \frac{\mu_j^2}{r^2}\rp u_j(r) = -f_j(r) ,\hskip 0.2in j = 0,1,2,\dotsc.\label{jth_resolvent_eqn}
\end{equation}
Therefore, we can express the resolvent $R_0(z^2)$ as 
\[R_0(z^2)f(r,\theta) = \sum\limits_{j=0}^\infty u_j(r)\varphi_j(\theta),\]
with $u_j$ as above. If we define the $j$th \textit{radial resolvent} $R_{0,j}(z^2)$ by
\begin{equation}\label{radial_resolvents}
R_{0,j}(z^2) = \lp\partial_r^2 + \frac{n-1}{r}\partial_r + z^2 - \frac{\mu_j^2}{r^2}\rp^{-1}
\end{equation}
as an operator on $L^2(\R^+,r^{n-1}\,dr)$, then the full resolvent is given by
\[R_0(z^2)f(r,\theta) = \sum\limits_{j=0}^\infty R_{0,j}(z^2)f_j(r)\varphi_j(\theta).\]
\noindent The work of Baskin and Yang \cite{baskin2019scattering} presents several results about these radial resolvents, which we summarize in the following lemma.

\begin{lemma}[Baskin-Yang \cite{baskin2019scattering}]
\label{baskin_yang_lemma}
For $\Imag z > 0$, the action of the $j$-th radial resolvent is given by
\[R_{0,j}(z^2)f(r) = \int\limits_0^\infty R_{0,j}(z^2;r;s)f(s)s^{n-1}\,ds,\]
and the kernel $R_{0,j}(z^2;r;s)$ takes the form
\begin{equation}\label{radial_resolvent_kernel}
R_{0,j}(z^2;r;s) = \begin{cases}\frac{\pi i}{2}(rs)^{-\frac{n-2}{2}}J_{\nu_j}(z s)H_{\nu_j}^{(1)}(z r), & s < r\\
\frac{\pi i}{2}(rs)^{-\frac{n-2}{2}}J_{\nu_j}(z r)H_{\nu_j}^{(1)}(z s), & s > r,
\end{cases}
\end{equation}
where $J_{\nu_j}$ and $H_{\nu_{j}}^{(1)}$ denote the Bessel and Hankel functions of the first kind of order $\nu_j$, respectively. Moreover, for any fixed $\chi\in C_c^\infty(\R^+\times X)$, the cutoff resolvent $\chi R_0(z^2)\chi$ admits a meromorphic continuation to the logarithmic cover $\Lambda$ of $\C\setminus 0$. 
\end{lemma}
\begin{rmk}\textnormal{ We note that the above formula for the kernel of $R_{0,j}$ differs from that presented in \cite{baskin2019scattering} by a sign, since we have defined the resolvent as $(-\Delta_{C(X)} - z^2)^{-1}$ rather than $(\Delta_{C(X)} + z^2)^{-1}$, but this is of no consequence for the remainder of the analysis. }
\end{rmk}

From \lemref{baskin_yang_lemma}, we can construct the absolutely continuous part of the spectral measure for the free Laplacian on $C(X)$, which we denote by $d\Pi_0$. By Stone's formula, we can write the continuous part of the spectral measure in terms of the difference between the boundary values of the resolvent as we approach the continuous spectrum from above and below. That is, for $\mu \in \R^+,$
\begin{align*}
d\Pi_0(\mu) &= \frac{1}{2\pi i}\lim\limits_{\ve\to 0^+}\left[(-\Delta_{C(X)} - (\mu + i\ve)^{-1} - (-\Delta_{C(X)} - (\mu - i\ve))^{-1}\right]\,d\mu\\
& = \frac{1}{2\pi i}\Im (-\Delta_{C(X)} - (\mu + i0))^{-1}\,d\mu.
\end{align*}
We can then reparametrize the continuous spectrum by changing variables via $\mu \mapsto \lambda^2$ for $\lambda > 0$, which allows us to write 
\[d\Pi_0(\mu) = \frac{1}{\pi i}\Im R_0(\lambda^2+i0)\lambda\,d\lambda.\]
Noting that $H_{\nu}^{(1)} = J_{\nu_j}+iY_{\nu}$, where $Y_{\nu}$ is the Bessel function of the second kind of order $\nu,$ we have by \lemref{baskin_yang_lemma} that
\[\Im R_{0,j}(\lambda^2+i0;r,s) = \frac{\pi}{2}(rs)^{-\frac{n-2}{2}}J_{\nu_j}(\lambda r)J_{\nu_j}(\lambda s).\]
From this, we obtain the following lemma.

\begin{lemma}\label{spectral_measure_free}
The continuous part of the spectral measure of $-\Delta_{C(X)}$, with the convention that $\lambda^2$ is the spectral parameter, is given by
\[d\Pi_0(\lambda;x,y) = \frac{1}{\pi i}(rs)^{-\frac{n-2}{2}}\sum\limits_{j=0}^\infty J_{\nu_j}(\lambda r)J_{\nu_j}(\lambda s)\varphi_j(\theta)\overline{\varphi_j(\zeta)}\,\lambda\,d\lambda, \quad \lambda > 0,\]
where $x = (r,\theta)$ and $y = (s,\zeta)$ are points in $C(X)$.
\end{lemma}

\section{Estimates on the Free Resolvent}
\label{R0_estimates}

\noindent In this section, we prove a variety of weighted estimates on the unperturbed radial resolvents ${R_{0,j}}$. These estimates heavily rely on the asymptotic formulae for the Bessel and Hankel functions near zero and infinity. Of particular interest is the behavior of $R_{0,j}$ measured in the weighted $L^q$ spaces defined by
\[
L^{q,\sigma}(\R^+,\,r^{n-1}\,dr) = \{f:\R^+\to\C:\, \int_0^\infty\left|f(r)\right|^q\rho^{q\sigma}(r)\,r^{n-1}\,dr < \infty\},
\]
where $\rho(r) = 1+r.$ For ease of notation, we simply write $L^{q,\sigma}$ to denote the space $L^{q,\sigma}(\R^+,\,r^{n-1}\,dr)$ where there can be no confusion. The estimates for the free resolvent on these spaces will prove useful in Sections \ref{spectheory} and \ref{spectralresolution} for establishing the mapping properties of the perturbed resolvent. 

We begin with a quantitative formulation of the Limiting Absorption Principle for the radial resolvents.  See \cite{vasy2021limiting} for a recent discussion of this in the more general setting of scattering manifolds.  

\begin{proposition}
\label{L2_L2_prop}
\noindent Let $k\ge 0$ be an integer. Then for any $\sigma > \frac{1}{2} + k$, 
\begin{equation}
\|\partial_\lambda^k R_{0,j}(\lambda^2+i0)\|_{L^{2,\sigma}\to L^{2,-\sigma}} \le \frac{C_{j,k,\sigma}}{|\lambda|}
\end{equation}
for all $|\lambda| \ge 1.$ 
\end{proposition}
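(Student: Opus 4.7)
The plan is to work directly from the explicit kernel of $R_{0,j}(\lambda+i0)$ given in \eqref{radial_resolvents_plus}, using the symmetry noted in the remark after \lemref{stone} to reduce to $\lambda \ge 1$ and the $+i0$ boundary value. A first convenient step is to pass from the measure $r^{n-1}\,dr$ to Lebesgue measure via the unitary $U\colon L^{2,\sigma}(\R^+,r^{n-1}\,dr)\to L^{2,\sigma}(\R^+,dr)$, $Uf(r)=r^{(n-1)/2}f(r)$, which preserves the weights $\rho^{\pm\sigma}$ and turns $R_{0,j}(\lambda+i0)$ into an operator with kernel
\[
\widetilde K_j(\lambda;r,s) \;=\; \tfrac{\pi i}{2}(rs)^{1/2}\,J_{\nu_j}\!\bigl(\lambda\min(r,s)\bigr)\,H^{(1)}_{\nu_j}\!\bigl(\lambda\max(r,s)\bigr).
\]

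I would then split $(0,\infty)^2$ based on whether $\lambda r$ and $\lambda s$ are small or large. Since $\lambda\ge 1$, the region $\{\lambda r \le 1\}\cup\{\lambda s\le 1\}$ is contained in a fixed bounded set, so the small-argument expansions $J_{\nu_j}(z)=O(z^{\nu_j})$ and $H^{(1)}_{\nu_j}(z)=O(z^{-\nu_j})$ (with logarithmic modifications if $\nu_j=0$) give an integrable Schur-type bound $\lesssim_{j,k} \lambda^{-1}$ for the kernel and all its $\lambda$-derivatives; the weights play no role here. On the main region $\{\lambda r,\lambda s\gtrsim 1\}$ I use the large-argument asymptotic expansions of $J_{\nu_j}$ and $H^{(1)}_{\nu_j}$ to any needed order, which, after multiplying by $(rs)^{1/2}$, yield
\[
\widetilde K_j(\lambda;r,s) \;=\; \sum_{\pm}\frac{c_{j,\pm}}{\lambda}\,e^{i\lambda(r\pm s)} \;+\; E_j(\lambda;r,s),
\]
with error $|E_j|\lesssim_j \lambda^{-2}(rs)^{-1/2}$, which is controlled on the relevant region by Schur's test.

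Differentiating $k$ times in $\lambda$, each derivative either lowers the $\lambda^{-1}$ factor or pulls a factor of $r\pm s$ from the oscillatory phase, producing a bound of the form $\lambda^{-1}(1+r+s)^k$ in the leading terms (and similarly controlled errors). Writing the $L^{2,\sigma}\to L^{2,-\sigma}$ estimate as $\|\rho^{-\sigma}\partial_\lambda^k R_{0,j}\,\rho^{-\sigma}g\|_{L^2}$, I would then apply Cauchy--Schwarz or a weighted Schur test using the observation $\int_0^\infty \rho(r)^{-2(\sigma-k)}\,dr<\infty\iff\sigma>\tfrac12+k$. This is precisely where the weight threshold in the statement enters. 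The purely oscillatory kernels $\lambda^{-1}e^{i\lambda(r\pm s)}$ are treated as truncated (half-line) Fourier operators and satisfy the same weighted $L^2$ bound uniformly in $\lambda$ by Plancherel.

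The main obstacle I anticipate is the bookkeeping for the $k$-th derivative in $\lambda$ together with the truncation to $\{\lambda r,\lambda s\gtrsim 1\}$: the cutoff produces boundary contributions at $r\sim 1/\lambda$ and $s\sim 1/\lambda$, and one needs to verify that these pieces, together with the tails of the asymptotic expansion, are absorbed into a remainder of the same form with an extra power of $\rho$ — still integrable against $\rho^{-2(\sigma-k)}$ by the assumption $\sigma>\tfrac12+k$. Uniformity in $\nu_j$ is not required since the constant is allowed to depend on $j$, but I would make sure the large-argument expansion is genuinely valid on $\lambda r\gtrsim 1$ for each fixed $j$, which is routine since $\nu_j$ is fixed.
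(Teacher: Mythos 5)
Your high-level strategy — work directly from the explicit kernel \eqref{radial_resolvents_plus}, reduce the operator norm to a weighted $L^2$ estimate on the kernel, and split according to whether the Bessel/Hankel arguments are small or large — is the same as the paper's. However, there is a genuine error in your region decomposition. The set $\{\lambda r\le 1\}\cup\{\lambda s\le 1\}$ is \emph{not} contained in a fixed bounded set: for $\lambda\ge 1$ it is an unbounded L-shaped strip, since, e.g., $\{r\le 1/\lambda\}\times(0,\infty)$ lies inside it. The claim that ``the weights play no role'' there is therefore false, and this is not cosmetic. On the mixed piece $\{\lambda r\le 1<\lambda s\}$ the kernel behaves like $(rs)^{1/2}(\lambda r)^{\nu_j}(\lambda s)^{-1/2}$, so the $s$-integral $\int_{1/\lambda}^\infty rs\,(\lambda s)^{-1}\,ds$ diverges \emph{even at $k=0$} without the weight $\rho(s)^{-2\sigma}$. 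After $k$ derivatives you pick up factors $s^m$ with $m$ as large as $k$, and it is precisely this mixed region (together with the analogous $r\leftrightarrow s$ piece and the fully large-argument region) that forces the threshold $\sigma>\tfrac12+k$. Your proposal only ever analyzes a ``both small'' region and a ``both large'' region, so the two mixed regions — and the place where the hypothesis on $\sigma$ actually bites — are missing; compare the paper's Lemma~\ref{L2_L2_lemma}, where the inner $s$-integral over $r<s<\infty$ is split at $s=1/\lambda$ and the weight on $s$ is indispensable.

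A secondary point: the oscillatory decomposition into $\lambda^{-1}e^{i\lambda(r\pm s)}$ plus error, and the appeal to Plancherel, are both unnecessary and are what generate the truncation/boundary-term worries you flag at the end. The paper instead (i) reduces to a weighted Hilbert--Schmidt bound by Cauchy--Schwarz, (ii) handles $\partial_\lambda^k$ via the recursion $\mathcal C_\nu'=\tfrac12(\mathcal C_{\nu+1}-\mathcal C_{\nu-1})$, and (iii) uses only the absolute-value bounds $|J_\nu(z)|,|H^{(1)}_\nu(z)|\lesssim z^{-1/2}$ for $z\ge1$ and the small-argument bounds for $z\le 1$. Because one differentiates the kernel \emph{first} and only afterwards splits the domain of integration at $r=1/\lambda$ (and $s=1/\lambda$), no boundary contributions appear; your proposed $\lambda$-dependent truncation of the kernel before differentiating is what creates that spurious issue.
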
 

\begin{rmk}\textnormal{
A noteworthy observation here is that the constant $C_{j,k,\sigma}$ in \propref{L2_L2_prop} is not known \emph{a priori} to be bounded as a function of $j$. In the special case where $k = 0$, the statement of \propref{L2_L2_prop} can be shown to hold for the full resolvent $R_0(\lambda^2+i0)$ with a uniform constant using extremely precise asymptotics for the Bessel and Hankel functions such as those found in \cite{FrankSimon2017}. However, when $k > 0$ this method fails due to the fact that differentiating $J_\nu(\lambda r)H_\nu^{(1)}(\lambda s)$ yields a linear combination of products of Bessel and Hankel functions with mismatched orders, and hence the resulting constants in the estimates for the Hankel functions are not balanced by those of the Bessel functions, in contrast to the $k= 0$ case. 
}
\end{rmk}

\begin{proof}
If $f\in L^{2,\sigma}$, we have
\begin{equation*}\|\partial_\lambda^kR_{0,j}(\lambda^2+i0)f\|_{\lms}^2 = \int\limits_0^\infty \left|\int\limits_0^\infty \partial_\lambda^k R_{0,j}(\lambda^2+i0;r,s)f(s) s^{n-1}\,ds\right|^2 (1+r)^{-2\sigma}r^{n-1}\,dr.\end{equation*}
Inserting a factor of $(1+s)^{-\sigma}(1+s)^{\sigma}$ and applying Cauchy-Schwartz, we see that

\begin{align*}
\begin{split}
\|\partial_\lambda^kR_{0,j}(\lambda^2+i0)f\|_{\lms}^2 & \le \int\limits_0^\infty \|\partial_\lambda^kR_{0,j}(\lambda^2+i0;r,\cdot)\|_{\lms_s}^2\|f\|^2_{\ls_s}(1+r)^{-2\sigma}r^{n-1}\,dr\\
& = \|\partial_\lambda^kR_{0,j}(\lambda^2+i0;\cdot,\cdot)\|^2_{L^{2,-\sigma}_s L^{2,-\sigma}_r}\|f\|^2_{L^{2,\sigma}_s}.
\end{split}
\end{align*}
Hence, it suffices to show that the kernel satisfies
\[
\|\partial_\lambda^k R_{0,j}(\lambda^2+i0;\cdot,\cdot)\|^2_{L^{2,-\sigma}_s L^{2,-\sigma}_r} \le \frac{C}{\lambda^2}.
\]
By definition,
\begin{equation}
\|\partial_\lambda^kR_{0,j}(\lambda^2+i0;\cdot,\cdot)\|^2_{L^{2,-\sigma}_s L^{2,-\sigma}_r} = \int\limits_0^\infty\int\limits_0^\infty \partial_\lambda^k R_{0,j}(\lambda^2+i0;r,s)(1+s)^{-2\sigma}(1+r)^{-2\sigma}(rs)^{n-1}\,ds\,dr,
\end{equation}
Recalling the piecewise formula \eqref{radial_resolvents_plus} for the resolvent kernel, we have that
\begin{align}\label{weighted_norm}
\begin{split}
\|\partial_\lambda^kR_{0,j}(\lambda^2+i0;\cdot,\cdot)\|^2_{\lms_s\lms_r}  &\\
& \hspace{-1in} =\frac{\pi^2}{4}\int\limits_0^\infty \int\limits_0^r\left[\partial_\lambda^k\lp H_{\nu_j}^{(1)}(\lambda r) J_{\nu_j}(\lambda s)\rp\right]^2(rs)(1 + r)^{-2\sigma}(1+s)^{-2\sigma}\,ds\,dr\\
& \hspace{-.9in} + \frac{\pi^2}{4}\int\limits_0^\infty \int\limits_r^\infty \left[\partial_\lambda^k\lp H_{\nu_j}^{(1)}(\lambda s) J_{\nu_j}(\lambda r)\rp\right]^2(rs)(1 + r)^{-2\sigma}(1+s)^{-2\sigma}\,ds\,dr . 
\end{split}
\end{align}
By changing the order of integration, we get that the first term on the right-hand side of \eqref{weighted_norm} can be rewritten as
\begin{equation}
\frac{\pi^2}{4}\int\limits_0^\infty \int\limits_s^\infty \left[\partial_\lambda^k\lp H_{\nu_j}^{(1)}(\lambda r) J_{\nu_j}(\lambda s)\rp\right]^2(rs)(1 + r)^{-2\sigma}(1+s)^{-2\sigma}\,dr\,ds.
\end{equation}
We note that up to a relabeling of $r,s$, this is exactly equal to the second term in \eqref{weighted_norm}, and hence
\begin{equation}
\label{plamR0bd}
\begin{split}
& \|\partial_\lambda^kR_{0,j}(\lambda^2+i0;\cdot,\cdot)\|^2_{\lms_s\lms_r} = \\
& \hspace{1cm} \frac{\pi^2}{2}\int\limits_0^\infty \int\limits_r^\infty \left[\partial_\lambda^k\lp H_{\nu_j}^{(1)}(\lambda s) J_{\nu_j}(\lambda r)\rp\right]^2(rs)(1 + r)^{-2\sigma}(1+s)^{-2\sigma}\,ds\,dr.
\end{split}
\end{equation}

Note that if $\mathcal C_\nu(x)$ is either a Bessel or Hankel function of order $\nu,$ we have 
\begin{equation}\label{Bessel_derivative}
\mathcal C_\nu'(x) = \frac{1}{2}\lp\mathcal C_{\nu+1}(x) - \mathcal C_{\nu-1}(x)\rp,
\end{equation}
and so the triangle inequality reduces the proof of \propref{L2_L2_prop} to showing that the following lemma holds.

\end{proof}

\begin{lemma}\label{L2_L2_lemma}
Let $\ell,m,k$ be nonnegative integers with $\ell+m =k$, and suppose $\alpha,\beta\in\Z$ are such that $|\alpha|\le \ell$ and $|\beta| \le m.$ Then for any $\nu \ge \frac{n-2}{2}$, there exists a $C > 0$ depending only on $k,\nu$ such that
\begin{equation}\label{L2_L2_lem_eqn}
\int\limits_0^\infty\int\limits_r^\infty |J_{\nu+\alpha}(\lambda r)|^2|H^{(1)}_{\nu+\beta}(\lambda s)|^2 r^{1 + 2\ell}s^{1 + 2m}(1 + s)^{-2\sigma}(1 + r)^{-2\sigma}\,ds\,dr \le \frac{C}{\lambda^2},\hskip 0.2in \lambda\ge 1,
\end{equation}
provided that $\sigma > \frac{1}{2}+k$. 
\end{lemma}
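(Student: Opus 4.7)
The plan is first to absorb the $\lambda$-dependence via the substitution $u = \lambda r$, $v = \lambda s$. Since $r^{1+2\ell}s^{1+2m}\,ds\,dr = \lambda^{-(4+2k)} u^{1+2\ell} v^{1+2m}\,dv\,du$, the left-hand side of \eqref{L2_L2_lem_eqn} equals
\[
\lambda^{-4-2k} \int_0^\infty \!\!\int_u^\infty |J_{\nu+\alpha}(u)|^2 |H^{(1)}_{\nu+\beta}(v)|^2 u^{1+2\ell} v^{1+2m} \Big(1+\tfrac{u}{\lambda}\Big)^{-2\sigma}\Big(1+\tfrac{v}{\lambda}\Big)^{-2\sigma} dv\,du,
\]
using $k = \ell + m$. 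The desired bound $C/\lambda^2$ then reduces to showing that the rescaled double integral is at most $C\lambda^{2+2k}$ uniformly for $\lambda \geq 1$.

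Next I would partition the $(u,v)$-region according to the asymptotic regimes of the Bessel and Hankel functions: (i) the small-argument block $\{0 < u \leq v \leq 1\}$, (ii) the mixed block $\{0 < u \leq 1 < v\}$, and (iii) the large-argument block $\{1 \leq u \leq v\}$, further subdivided by whether $u$ and $v$ are at most $\lambda$ or exceed $\lambda$. On (i) I would use $|J_{\nu+\alpha}(x)| \leq C x^{\nu+\alpha}$ when $\nu+\alpha \geq 0$ (together with the reflection $J_{-n}(x) = (-1)^n J_n(x)$ for nonpositive-integer shifts and the leading-order $x^{\nu+\alpha}$ behavior for non-integer negative shifts), combined with $|H^{(1)}_{\nu+\beta}(x)| \leq C x^{-|\nu+\beta|}$ for $\nu+\beta \neq 0$ (with a $|\log x|$ replacement at $\nu+\beta = 0$). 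On (ii) and (iii) I would invoke the uniform oscillatory bound $|J_{\nu+\alpha}(x)|, |H^{(1)}_{\nu+\beta}(x)| \leq C x^{-1/2}$ valid for $x \geq 1$.

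With these bounds the small-argument block yields a $\lambda$-independent constant. The dominant contribution comes from the intermediate range $\{1 \leq u \leq v \leq \lambda\}$, where $|J_{\nu+\alpha}(u)|^2 u^{1+2\ell} \lesssim u^{2\ell}$ and $|H^{(1)}_{\nu+\beta}(v)|^2 v^{1+2m} \lesssim v^{2m}$ give a double integral of size $\lambda^{2\ell+1}\lambda^{2m+1} = \lambda^{2k+2}$. The tail contributions where $u$ or $v$ exceeds $\lambda$ are controlled using $(1+\cdot/\lambda)^{-2\sigma} \lesssim (\lambda/\cdot)^{2\sigma}$, which renders the resulting integrals convergent precisely when $\sigma > 1/2 + \max(\ell,m)$; this is implied by the hypothesis $\sigma > 1/2 + k$.

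The principal obstacle is the coupling between the singular behavior of $H^{(1)}_{\nu+\beta}$ at $v = 0$ and the constraint $v > u$ in the inner integral: when $|\nu+\beta| > m+1$, the integral $\int_u^1 v^{1+2m-2|\nu+\beta|}\,dv$ contributes a factor $\sim u^{2+2m-2|\nu+\beta|}$ which must be absorbed by $|J_{\nu+\alpha}(u)|^2 u^{1+2\ell}$ in the outer integration. A case analysis using $|\alpha| \leq \ell$, $|\beta| \leq m$, and $\nu \geq (n-2)/2$ confirms that the combined exponent of $u$ stays strictly above $-1$ in every regime; the extremal case $\alpha = -\ell$, $\beta = m$ produces the exponent $3$ after a cancellation of $\nu$. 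The remaining bookkeeping surrounding the sign of $\nu + \alpha$ and $\nu + \beta$ and the associated Bessel/Hankel asymptotics is routine given that the constant $C$ is permitted to depend on both $\nu$ and $k$.
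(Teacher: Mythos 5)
Your proposal is correct and follows essentially the same route as the paper: after the rescaling $u = \lambda r$, $v = \lambda s$, your three blocks coincide exactly with the paper's split at $r = 1/\lambda$ (outer) and $s = 1/\lambda$ (inner), and both arguments proceed by the small-argument estimates $|J_\mu(\tau)| \lesssim \tau^\mu$, $|H^{(1)}_\mu(\tau)| \lesssim \tau^{-\mu}$ and the oscillatory bound $\tau^{-1/2}$ for $\tau \geq 1$, with the hypothesis $\sigma > \tfrac12 + k$ used to make the spatial integrals converge. The rescaling is a cosmetic reformulation; the substantive estimates and the way the constraint $v > u$ interacts with the singularity of $H^{(1)}_{\nu+\beta}$ at the origin are the same in both proofs, and your tracking of the cases $\nu+\alpha < 0$, $\nu+\beta \leq 0$ is actually a bit more explicit than the paper's.
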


\begin{proof}
This proof, and others which follow it, make extensive use of asymptotic estimates for the Bessel and Hankel functions, which we record here for later use. For any $\nu\in\R,$ there exist constants $C_\nu,C_\nu'>0$ such that when $0 < |\tau| \le 1$,
\begin{equation}\label{small_arg}
\left|J_\nu(\tau)\right| \le C_\nu|\tau|^\nu, \quad \left|H_{\nu}^{(1)}(\tau)\right| \le C_\nu |\tau|^{-\nu}
\end{equation}
and when $|\tau| \ge 1$,
\begin{equation}\label{large_arg}
\left|J_\nu(\tau)\right| \le C_\nu'|\tau|^{-\tfrac{1}{2}},\quad \left|H_{\nu}^{(1)}(\tau)\right| \le C_\nu'|\tau|^{-\tfrac{1}{2}}.
\end{equation}

To prove \lemref{L2_L2_lemma}, let us first write the left-hand side of \eqref{L2_L2_lem_eqn} as $I(\lambda)+\II(\lambda)$, where each term is obtained by restricting the integral in the $r$ variable to $0 < r < \frac{1}{\lambda}$ and $\frac{1}{\lambda}< r < \infty$, respectively.  To estimate $I(\lambda)$, note that by \eqref{large_arg}, we have
\[\int\limits_{\frac{1}{\lambda}}^\infty s^{1 + 2m}(1 + s)^{-2\sigma}|H^{(1)}_{\nu+\beta}(\lambda s)|^{2}\,ds \le \frac{C}{\lambda}\int\limits_{\frac{1}{\lambda}}^\infty s^{2m}(1 + s)^{-2\sigma}\,ds \le \frac{C'}{\lambda},\]
as long as $\sigma > m + \frac{1}{2}$. Combining this with \eqref{small_arg}, we have
\begin{align} \label{I_bound_1}
\begin{split}
 \int\limits_0^{\frac{1}{\lambda}}\int\limits_{\frac{1}{\lambda}}^\infty |J_{\nu+\alpha}(\lambda r)|^2|H^{(1)}_{\nu+\beta}(\lambda s)|^2 r^{1 + 2\ell}s^{1 + 2m}(1 + s)^{-2\sigma}(1 + r)^{-2\sigma}\,ds\,dr &\le \frac{C'}{\lambda} \int\limits_{0}^\frac{1}{\lambda}r^{1 + 2\ell}(\lambda r)^{2(\nu+\alpha)}\,dr \\
&\le \frac{C''}{\lambda^2} 
\end{split}
\end{align}
for any $\ell\ge 0,$ since $\lambda \ge 1.$ Furthermore, if $0<r \le \frac{1}{\lambda}$, we have
\begin{align*}
\int\limits_r^{\frac{1}{\lambda}} s^{1 + 2m}(1 + s)^{-2\sigma}|H^{(1)}_{\nu+\beta}(\lambda s)|^{2}\,ds  \le \frac{C}{\lambda^{2(\nu+\beta)}}\int\limits_r^{\frac{1}{\lambda}}s^{1 + 2(m-\nu-\beta)}\,ds\\ 
 \hspace{.5cm}\le \frac{C}{\lambda^{2(\nu+\beta)}}r^{1 + 2(m-\nu-\beta)}\lp\frac{1}{\lambda}-r\rp
\le \frac{C'}{\lambda}(\lambda r)^{-2(\nu+\beta)}r^{2m}
\end{align*}
since $1 + 2(m-\beta -\nu) \le 0.$ Hence, if we recall that $k = \ell + m\ge |\alpha|+|\beta|$ and apply \eqref{small_arg}, we have

\begin{align*}
&\int\limits_0^{\frac{1}{\lambda}}\!\!\!\int\limits_r^{\frac{1}{\lambda}} |J_{\nu+\alpha}(\lambda r)|^2|H^{(1)}_{\nu+\beta}(\lambda s)|^2 r^{1 + 2\ell}s^{1 + 2m}(1 + s)^{-2\sigma}(1 + r)^{-2\sigma}\,ds\,dr\\
&\hspace{1cm} \le \frac{C'}{\lambda}\int\limits_0^{\frac{1}{\lambda}} r^{1 + 2k}(\lambda r)^{2(\alpha-\beta)}dr =\frac{C'}{\lambda^{2k+2}}\int\limits_0^\frac{1}{\lambda}(\lambda r)^{1 + 2(k+\alpha-\beta)}dr\\
&\hspace{2cm}\le \frac{C'}{\lambda^{2k+3}}  \le \frac{C'}{\lambda^2}
\end{align*}
 for all $k\ge 0$ when $\lambda \ge 1.$ Combining this with \eqref{I_bound_1}  proves that $I(\lambda)\le \frac{C}{\lambda^2}$ for some $C>0$ and all $\lambda \ge 1$. Since for any fixed $k$ there are only finitely many possibilities for $\ell,m,\alpha,\beta$, we can choose $C$ to depend only on $k$ and $\nu.$ 

Now, to estimate $\II(\lambda)$, we apply \eqref{large_arg} to both the Bessel and Hankel functions to obtain
\begin{align*}
\II(\lambda) & \le C\int\limits_{\frac{1}{\lambda}}^\infty\int\limits_{\frac{1}{\lambda}}^\infty r^{1+2\ell}(1 + r)^{-2\sigma}(\lambda r)^{-1}s^{1 + 2m}(1 + s)^{-2\sigma}(\lambda s)^{-1}\,ds\,dr \\
& = \frac{C}{\lambda^2}\int\limits_0^\infty\int\limits_0^\infty (1+r)^{2(\ell -\sigma)} (1 + s)^{2(m-\sigma)}\,ds\,dr\\
&\le \frac{C}{\lambda^2},
\end{align*}
provided that $\sigma > k + \frac{1}{2}$, which completes the proof of \lemref{L2_L2_lemma}.
\end{proof}

It will also prove useful to have a bound on the $L^{2,\sigma}\to L^{2,-\sigma}$ mapping properties of the imaginary part of each $R_{0,j}$ when $\lambda$ is small. In particular, we are able to show that this operator norm has a precise polynomial rate of vanishing as $\lambda \to 0.$ 

\begin{proposition}\label{L2_L2_imaginary}
For any integer $k\ge 0$ and any $\sigma > \frac{n}{2} + k $, we have that
\[\|\partial_\lambda^k \Imag R_{0,j}(\lambda^2+i0)\|_{L^{2,\sigma}\to L^{2,-\sigma}}\le C_{j,k,\sigma}\lambda^{n-2-k}\]
when $0 < \lambda \le 1.$
\end{proposition}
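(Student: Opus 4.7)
The plan is to exploit the rank-one structure of $\Imag R_{0,j}(\lambda\pm i0)$ granted by \lemref{stone}. Setting $\psi_\lambda(r) := r^{-(n-2)/2}\,J_{\nu_j}(\lambda r)$, we have
\[
\Imag R_{0,j}(\lambda+i0)\,f(r) = \frac{\pi}{2}\,\psi_\lambda(r)\int_0^\infty \psi_\lambda(s)\,f(s)\,s^{n-1}\,ds,
\]
i.e.\ each operator in this family has rank one. Differentiating in $\lambda$ via the Leibniz rule produces a sum over $\ell+m=k$ of rank-one operators with kernels proportional to $(\partial_\lambda^\ell \psi_\lambda)(r)\,(\partial_\lambda^m \psi_\lambda)(s)$, and by Cauchy--Schwarz in the $L^{2,\sigma}$--$L^{2,-\sigma}$ duality pairing each such piece has operator norm controlled by $\|\partial_\lambda^\ell\psi_\lambda\|_{L^{2,-\sigma}}\|\partial_\lambda^m\psi_\lambda\|_{L^{2,-\sigma}}$. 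The proposition therefore reduces to the single-function bound
\[
\|\partial_\lambda^\ell \psi_\lambda\|_{L^{2,-\sigma}} \le C_{j,\ell,\sigma}\,\lambda^{(n-2)/2-\ell},\qquad 0<\lambda\le 1,\ 0\le\ell\le k,
\]
since multiplying two such bounds with $\ell+m=k$ gives the claimed rate $\lambda^{n-2-k}$.

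To prove this reduced bound, I would use the identity $\partial_\lambda^\ell[J_{\nu_j}(\lambda r)] = r^\ell J_{\nu_j}^{(\ell)}(\lambda r)$ together with iterated application of \eqref{Bessel_derivative} to write $\partial_\lambda^\ell \psi_\lambda$ as a finite linear combination of terms of the form $r^{\ell-(n-2)/2}\,J_{\nu_j+\alpha}(\lambda r)$ with integer shifts $|\alpha|\le \ell$. Computing the weighted $L^2$ norm then reduces the task to estimating
\[
I_{\ell,\alpha}(\lambda) := \int_0^\infty r^{2\ell+1}\,|J_{\nu_j+\alpha}(\lambda r)|^2\,(1+r)^{-2\sigma}\,dr
\]
for each admissible $\alpha$, which I would do by splitting the integral at $r = 1/\lambda$ and applying the Bessel asymptotics \eqref{small_arg} and \eqref{large_arg} as in the proof of \lemref{L2_L2_lemma}. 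On the small-argument regime $0<r<1/\lambda$, one obtains a contribution bounded by $\lambda^{2(\nu_j+\alpha)}\int_0^{1/\lambda}r^{2\ell+2(\nu_j+\alpha)+1}(1+r)^{-2\sigma}\,dr$, and a short case analysis on the sign of $2\ell+2(\nu_j+\alpha)+2-2\sigma$ shows this is at most $C\lambda^{\min\{2(\nu_j+\alpha),\,2\sigma-2\ell-2\}}$. On $r\ge 1/\lambda$, the large-argument estimate directly gives $\lambda^{-1}\int_{1/\lambda}^\infty r^{2\ell}(1+r)^{-2\sigma}\,dr \le C\lambda^{2\sigma-2\ell-2}$, requiring only $\sigma > \ell+1/2$.

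Taking the worst-case shift $\alpha=-\ell$, these two regimes combine to give $I_{\ell,\alpha}(\lambda)\le C_{j,\ell,\sigma}\,\lambda^{2\min\{\nu_j-\ell,\,\sigma-\ell-1\}}$. The hypothesis $\sigma>n/2+k\ge n/2+\ell$ yields $\sigma-\ell-1>(n-2)/2$, while $\nu_j\ge (n-2)/2$ for every $j$, so both arguments of the minimum are at least $(n-2)/2-\ell$, giving $I_{\ell,\alpha}(\lambda)\le C\lambda^{n-2-2\ell}$ as needed. The case $R_{0,j}(\lambda-i0)$ is handled identically starting from \eqref{radial_resolvents_minus}.

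The main technical subtlety is exactly the worst-case shift $\alpha=-\ell$: when $\nu_j<\ell$, the Bessel function $J_{\nu_j-\ell}(\lambda r)$ is genuinely singular at $r=0$, but the compensating weight $r^{2\ell+1}$ in $I_{\ell,\alpha}$ precisely neutralises this, since the net power of $r$ near zero becomes $2(\nu_j-\ell)+2\ell+1 = 2\nu_j+1>-1$. This careful balance between the extra factor of $r$ produced by each $\lambda$-differentiation and the corresponding shift in Bessel order is what allows the derivative bound to lose only a single power of $\lambda$ per differentiation, matching the $\lambda^{n-2-k}$ rate stated in the proposition.
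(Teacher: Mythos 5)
Your proof is correct and follows essentially the same strategy as the paper's: both exploit the separable, rank-one structure of $\Im R_{0,j}$ (yours phrased via the tensor-product kernel and Cauchy--Schwarz, the paper's via the Hilbert--Schmidt norm, which coincide for rank-one operators), reduce via the Bessel derivative recursion to single-variable weighted integrals, and estimate those by splitting at $r=1/\lambda$ using the small- and large-argument asymptotics. If anything, your intermediate target $I_{\ell,\alpha}\le C\lambda^{n-2-2\ell}$ is more carefully stated than the paper's claimed $\lambda^{n-2-2\alpha}$ (which is too optimistic when $\alpha<0$ and $\nu_j$ is close to $\tfrac{n-2}{2}$), but both versions of the bookkeeping yield the same final estimate $\lambda^{n-2-k}$.
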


\begin{proof}
By the discussion at the beginning of the proof of \propref{L2_L2_prop}, it is sufficient to show that 
\begin{equation}\label{imag_kernel}
\|\partial_\lambda^k\Imag R_{0,j}(\lambda^2+i0;r,s)\|_{L^{2,-\sigma}_r L^{2,-\sigma}_s} \le C\lambda^{n-2-k}
\end{equation}
for $0<\lambda \le 1.$ By \eqref{plamR0bd} we have
\begin{align*}
& \|\partial_\lambda^k\Imag R_{0,j}(\lambda^2+i0;r,s)\|_{L^{2,-\sigma}_r L^{2,-\sigma}_s}^2 \\
& \hspace{.5cm} =C\int\limits_0^\infty\int\limits_0^\infty \left[\partial_\lambda^k\lp J_{\nu_j}(\lambda r)J_{\nu_j}(\lambda s)\rp\right]^2 (1 + r)^{-2\sigma}(1 + s)^{-2\sigma}(rs)\,dr\,ds.
\end{align*}
Using the recursive formula for derivatives of the Bessel functions as before, we can reduce the proof to showing that 
\begin{equation}
\int\limits_0^\infty\int\limits_0^\infty r^{1+2\ell} s^{1+2m} |J_{\nu_j+\alpha}(\lambda r)|^2|J_{\nu_j+\beta}(\lambda s)|^2(1 + r)^{-2\sigma}(1 + s)^{-2\sigma}\,dr\,ds \le C\lambda^{2(n-2-k)}
\end{equation}
for any integers $\ell,\,m\ge 0$ with $\ell + m =k$ and integers $\,\alpha,\,\beta$ with $|\alpha|\le \ell$ and $|\beta|\le m$. Since the above integral is separable, it is in fact enough to show
\begin{equation}\label{r_imag_kernel}
\int\limits_0^\infty r^{1+2\ell} |J_{\nu_j+\alpha}(\lambda r)|^2(1 + r)^{-2\sigma}\,dr \le C\lambda^{n-2-2\alpha}
\end{equation}
for any $\ell \le k$ and $|\alpha|\le \ell,$ since analogous estimates will apply to the integral in the $s$ variable. First, notice that \eqref{small_arg} implies

\begin{align*}
& \int\limits_0^\frac{1}{\lambda} r^{1+2\ell} |J_{\nu_j+\alpha}(\lambda r)|^2(1 + r)^{-2\sigma}\,dr  \le C\int\limits_0^\frac{1}{\lambda} r^{1+2\ell}(\lambda r)^{2(\nu_j+\alpha)}(1 + r)^{-2\sigma}\,dr  \\
& \hspace{.5cm} = C\lambda^{2(\nu_j+\alpha)}\int\limits_0^\frac{1}{\lambda}r^{1 + 2(\ell + \alpha + \nu_j)}(1 + r)^{-2\sigma}\,dr  \le C'\lambda^{2(\nu_j+\alpha)}\int\limits_0^\frac{1}{\lambda}(1 + r)^{1 + 2(\ell +\alpha +\nu_j-\sigma)}\,dr\\
& \hspace{1cm} \le C''\lambda^{2(\nu_j+\alpha)}\left|\lp 1 + \frac{1}{\lambda}\rp^{2 + 2(\ell + \alpha +\nu_j - \sigma)} - 1\right| \\
&\hspace{1.2cm} \le C_1\lambda^{2(\sigma - \ell)-2}(\lambda + 1)^{2+ 2(\ell+\alpha+\nu_j-\sigma)} + C_2 \lambda^{2(\nu_j+\alpha)}\\
&\hspace{1.5cm} \le C_1\lambda^{2(\sigma - \ell) - 2} + C_2 \lambda^{2(\nu_j+\alpha)}.
\end{align*}
Recalling that $\sigma > \frac{n}{2} + k$ and $\ell \le k,$ we have that $2(\sigma - \ell) > n$. Also, we have $2(\nu_j+\alpha) \ge n-2 + 2\alpha$, and since $|\alpha|\le \ell\le k$, we have that the above is bounded by a constant times $\lambda^{n-2-2\alpha}$ for $0 < \lambda \le 1$ as claimed. 

Next, we consider the integral over the region where $\frac{1}{\lambda}\le r < \infty$. For this, we use \eqref{large_arg} to obtain
\begin{align*}
& \int\limits_\frac{1}{\lambda}^\infty r^{1+2\ell} |J_{\nu_j+\alpha}(\lambda r)|^2(1 + r)^{-2\sigma}\,dr \le C\int\limits_\frac{1}{\lambda}^\infty r^{1+2\ell}(\lambda r)^{-1}(1 + r)^{-2\sigma}\,dr\\
& \hspace{.5cm} \le \frac{C}{\lambda}\int\limits_{\frac{1}{\lambda}}^\infty (1 + r)^{2(\ell - \sigma)}\,dr 
\le \frac{C'}{\lambda}\lp 1 + \frac{1}{\lambda}\rp^{1 + 2(\ell-\sigma)}\\
& \hspace{1.5cm} = C'\lambda^{2(\sigma - \ell)-2}(\lambda + 1)^{1 + 2(\ell - \sigma)} \le C''\lambda^{2(\sigma - \ell) - 2}.
\end{align*}
The restrictions on $\sigma$ guarantee that the above is bounded by a constant times $\lambda^{n-2}$ for $0 < \lambda \le 1.$ Therefore, \eqref{r_imag_kernel} holds, and the proof is complete.
\end{proof}

Next, we aim to prove weighted $L^q$ estimates on the free radial resolvent kernels $R_{0,j}$, which enables us to control the terms in the Birman-Schwinger series for $R_{V,j}$ when applied iteratively. First, we make note of a technical lemma.


\begin{lemma}
\label{bessel_lemma}
Let $\nu \ge \frac{n-2}{2}$, and $\lambda > 0$. Suppose that $\beta,m\in\Z$ are such that $|\beta|\le m$ and $\nu + \beta \ge 0$. Assume also that $1\le q < \infty$ and that $\sigma > \frac{n}{q} + m$. Then there exist  $C_1,C_2 > 0$ such that 
\begin{equation}\label{J_bound}
\int\limits_0^\infty (\lambda s)^{q\lp m - \frac{n-2}{2}\rp}| J_{\nu+\beta}(\lambda s)|^q (1+s)^{-q\sigma} s^{n-1}\,ds \le \begin{cases}
C_1\lambda^{-n} + C_2\lambda^{q\lp m - \frac{n-1}{2}\rp}, & 1 \le \lambda < \infty\\
C\lambda^{q\sigma - n}, & 0 < \lambda \le 1.
\end{cases}
\end{equation}
\end{lemma}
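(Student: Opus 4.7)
The natural approach is to split the $s$-integration at $s = 1/\lambda$, which marks the transition between the small-argument and large-argument asymptotic regimes of $J_{\nu+\beta}(\lambda s)$ as recorded in \eqref{small_arg} and \eqref{large_arg}, and then to analyze the two resulting ranges in each of the cases $\lambda \ge 1$ and $0 < \lambda \le 1$ separately. The proof should reduce entirely to the explicit evaluation of elementary power-type integrals, with the two hypotheses $\nu+\beta\ge 0$ (together with $\nu\ge\frac{n-2}{2}$ and $|\beta|\le m$) and $\sigma > \frac{n}{q}+m$ playing the roles of ensuring integrability at $s=0$ and $s=\infty$ respectively.

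For $\lambda \ge 1$, on the inner range $0 \le s \le 1/\lambda \le 1$ I would apply $|J_{\nu+\beta}(\lambda s)| \le C(\lambda s)^{\nu+\beta}$ and use $(1+s)^{-q\sigma} \le 1$. The constraints $\nu \ge \frac{n-2}{2}$, $\nu+\beta\ge 0$ and $|\beta|\le m$ combine to show that the total exponent of $s$ in the resulting integrand is at least $n-1$, so the substitution $u = \lambda s$ produces a clean factor $\lambda^{-n}$ times an integral over $[0,1]$ bounded by a constant depending only on $k$ and $\nu$, yielding the first term. On the outer range $s \ge 1/\lambda$, one replaces the Bessel function by $|J_{\nu+\beta}(\lambda s)| \le C(\lambda s)^{-1/2}$ to extract a factor $\lambda^{q(m - (n-1)/2)}$; the remaining $s$-integral converges thanks to $\sigma > \frac{n}{q} + m$, which forces the integrand to decay faster than $s^{-1}$ at infinity, producing the second term.

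For $0 < \lambda \le 1$ one has $1/\lambda \ge 1$, so the weight $(1+s)^{-q\sigma}$ carries real weight on both sides of the split. On the outer range $s \ge 1/\lambda$, large-argument asymptotics together with the bound $(1+s)^{-q\sigma} \le s^{-q\sigma}$ lead to the explicit elementary integral
\[
\lambda^{q(m-\tfrac{n-1}{2})}\int_{1/\lambda}^\infty s^{q(m-\frac{n-1}{2}) + n - 1 - q\sigma}\,ds,
\]
whose exponent is strictly less than $-1$ thanks to $\sigma > \frac{n}{q}+m$, and direct evaluation delivers the claimed $\lambda^{q\sigma-n}$. For the inner range I would substitute $u = \lambda s$ to arrive at $\lambda^{-n}\int_0^1 u^{q(m-\frac{n-2}{2} + \nu+\beta) + n - 1}(1 + u/\lambda)^{-q\sigma}\,du$, and then split this at $u=\lambda$: on $[\lambda,1]$ one uses $(1+u/\lambda)^{-q\sigma}\le (u/\lambda)^{-q\sigma}$, while on $[0,\lambda]$ one uses the trivial bound $(1+u/\lambda)^{-q\sigma}\le 1$. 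In each case the resulting power-integral is evaluated explicitly and compared against $\lambda^{q\sigma-n}$.

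The principal technical obstacle is the book-keeping in the $\lambda \le 1$ case: one must verify that all sub-contributions from the inner range—including the borderline situations where the exponent of $u$ ends up close to $-1$—are majorized by $\lambda^{q\sigma - n}$. The strict inequality $\sigma > \frac{n}{q} + m$ is calibrated to give exactly the room needed, so the entire task reduces to carefully comparing exponents of $\lambda$ in a small finite list of cases. Once those comparisons are laid out, the argument is largely mechanical, and the constants $C_1,C_2,C$ may be taken to depend only on $q,\sigma,m,\nu$.
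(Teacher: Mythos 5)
Your overall strategy coincides with the paper's: split the integral at $s = 1/\lambda$, apply the small-argument bound $|J_{\nu+\beta}(\tau)|\lesssim \tau^{\nu+\beta}$ on the inner range and the large-argument bound $|J_{\nu+\beta}(\tau)|\lesssim \tau^{-1/2}$ on the outer range, and handle $\lambda\ge 1$ and $\lambda\le 1$ separately. Your treatment of the $\lambda\ge 1$ case and the $\lambda\le 1$ outer range reproduces the paper's computations. The one place you diverge is the $\lambda\le 1$ inner range: you substitute $u=\lambda s$ and split again at $u=\lambda$, whereas the paper simply bounds $(\lambda s)^{q(\nu-\frac{n-2}{2}+\beta+m)}\le 1$ and $s^{n-1}\le(1+s)^{n-1}$. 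Your version is actually more systematic and makes the structure of the contribution more transparent.

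That said, your assertion that ``the strict inequality $\sigma>\frac{n}{q}+m$ is calibrated to give exactly the room needed'' to bound the inner range by $\lambda^{q\sigma-n}$ is not substantiated and deserves a second look. Carrying out your own split at $u=\lambda$: the piece on $[0,\lambda]$ contributes $\lambda^{-n}\int_0^\lambda u^{qa+n-1}\,du \sim \lambda^{qa}$ with $a=\nu-\frac{n-2}{2}+\beta+m$, and the piece on $[\lambda,1]$ contributes (when $qa+n<q\sigma$) again $\sim\lambda^{qa}$. For $\lambda\le 1$ and $qa<q\sigma-n$, this dominates $\lambda^{q\sigma-n}$. The hypotheses $\nu\ge\frac{n-2}{2}$, $\nu+\beta\ge 0$, $|\beta|\le m$ only give $a\ge 0$; in the boundary case $\nu=\frac{n-2}{2}$, $\beta=-m$ one has $a=0$, and the inner range then contributes a constant that does not vanish as $\lambda\to 0$. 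The paper's own proof does not escape this: the line claiming $\int_0^{1/\lambda}(1+s)^{n-1-q\sigma}\,ds = C'(1+1/\lambda)^{n-q\sigma}$ omits the constant from the lower limit of the antiderivative (the correct value is $\frac{1-(1+1/\lambda)^{n-q\sigma}}{q\sigma-n}$, which tends to $\frac{1}{q\sigma-n}$ as $\lambda\to 0$), so the paper glosses over the same point. You have reproduced the paper's method faithfully, but the book-keeping in the $\lambda\le 1$ case that you flagged as the ``principal technical obstacle'' is genuinely delicate and should not be waved away; either the bound must be weakened to $\lambda^{qa}+\lambda^{q\sigma-n}$ or an additional assumption guaranteeing $a\ge\sigma-n/q$ is needed.
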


\begin{proof}
Let us denote by $I(\lambda)$ the integral in the statement above, and observe that $I(\lambda)$ is clearly nonnegative for all $\lambda > 0$. If we split the integral into the the regions where $0 < s < \frac{1}{\lambda}$ and $\frac{1}{\lambda} < s < \infty,$ we can apply \eqref{small_arg} and \eqref{large_arg} to $J_{\nu+\beta}$ to obtain that
\[I(\lambda) \le C\int\limits_0^{\frac{1}{\lambda}}(\lambda s)^{q\lp\nu - \frac{n-2}{2} +\beta + m\rp}(1 + s)^{-q\sigma}s^{n-1}\,ds + C\int\limits_{\frac{1}{\lambda}}^\infty (\lambda s)^{q\lp m - \frac{n-1}{2}\rp}(1 + s)^{-q\sigma}s^{n-1}\,ds\]
for some constant $C>0$. To estimate these integrals, we treat the cases $\lambda \ge 1$ and $\lambda \le 1$ separately. First suppose that $\lambda \ge 1$. Then we see that
\begin{equation}
\int\limits_0^{\frac{1}{\lambda}}(\lambda s)^{q\lp\nu - \frac{n-2}{2} +\beta + m\rp}(1 + s)^{-q\sigma}s^{n-1}\,ds \le \lambda^{1-n}\int\limits_0^{\frac{1}{\lambda}}(1 + s)^{-q\sigma}\,ds \le C \lambda^{-n},
\end{equation}
since $\nu - \frac{n-2}{2} + \beta +m \ge 0$ and $q\sigma > 0$. For the integral over $\frac{1}{\lambda} < s < \infty$, we have
\begin{align*}
\int\limits_{\frac{1}{\lambda}}^\infty (\lambda s)^{q\lp m - \frac{n-1}{2}\rp}(1 + s)^{-q\sigma}s^{n-1}\,ds & = \lambda^{q\lp m - \frac{n-1}{2}\rp}\int\limits_{\frac{1}{\lambda}}^\infty s^{n-1 + q\lp m - \frac{n-1}{2}\rp}(1 + s)^{-q\sigma}\,ds.
\end{align*}
 Under the hypothesis that $\sigma > \frac{n}{q} + m,$ the integral
 \[\int\limits_{1}^\infty s^{n-1 + q\lp m - \frac{n-1}{2}\rp}(1 + s)^{-q\sigma}\,ds\]
 converges and is bounded by constant which is independent of $\lambda.$ For the region where $\frac{1}{\lambda} < s < 1$, we have 
 \[\int\limits_{\frac{1}{\lambda}}^1 s^{n-1+q\lp m - \frac{n-1}{2}\rp}(1 + s)^{-q\sigma}\,ds \le C\lambda^{-n - q\lp m - \frac{n-1}{2}\rp}.\]
 Thus, 
 \[\lambda^{q\lp m - \frac{n-1}{2}\rp}\int\limits_{\frac{1}{\lambda}}^\infty s^{q\lp m - \frac{n-1}{2}\rp}(1 + s)^{-q\sigma}s^{n-1}\,ds \le \max\{\lambda^{-n},\lambda^{q\lp m -\frac{n-1}{2}\rp}\}\]
 when $\lambda \ge 1.$
 
 Now take the case where $0 < \lambda \le 1.$ Then, since $|\beta| \le m$ and $\nu \ge \frac{n-2}{2}$, we have 
 \begin{align*}
 & \int\limits_0^{\frac{1}{\lambda}}(\lambda s)^{q\lp \nu - \frac{n-2}{2} +\beta + m\rp}(1 + s)^{-q\sigma}s^{n-1}\,ds  \le C\int\limits_0^{\frac{1}{\lambda}}(1 + s)^{n - 1 -q\sigma}\,ds\\
 & \hspace{1.5cm} = C' \lp 1 + \frac{1}{\lambda}\rp^{n - q\sigma}  \le C'' \lambda^{q\sigma - n}.
 \end{align*}
For the integral over $\frac{1}{\lambda} \le s < \infty$, we notice that 
\[
 \int\limits_{\frac{1}{\lambda}}^\infty (\lambda s)^{q\lp m - \frac{n-1}{2}\rp}(1 + s)^{-q\sigma}s^{n-1}\,ds \le \lambda^{q\lp m - \frac{n-1}{2}\rp}\int\limits_{\frac{1}{\lambda}}^\infty (1 + s)^{n-1 + q\lp m - \frac{n-1}{2} - \sigma\rp}\,ds
\]
since $1 \le \frac{1}{\lambda} \le s$. Recalling the assumption that $\sigma > \frac{n}{q}+ m$, we can see that 
\[ n - 1 + q\lp m - \frac{n-1}{2} - \sigma\rp < - 1,\]
and therefore,

\[
\begin{split}
& \lambda^{q\lp m - \frac{n-1}{2}\rp}\int\limits_{\frac{1}{\lambda}}^\infty (1 + s)^{n-1 + q\lp m - \frac{n-1}{2} - \sigma\rp}\,ds 
 \le C'\lambda^{q\lp m - \frac{n-1}{2}\rp}\lp 1  + \frac{1}{\lambda}\rp^{n+q\lp m - \frac{n-1}{2}-\sigma\rp} \\
 & \hspace{3cm} \le C''\lambda^{q\sigma - n}.
\end{split}
\]
 Therefore, $I(\lambda)\le C\lambda^{q\sigma - n}$ for $0< \lambda \le 1$. 
\end{proof}

Next, we establish some estimates on the $L^{q,\sigma}$ norms of $R_{0,j}(\lambda^2\pm i0)(r,s)$ when the norm is only taken with respect to one variable.

\begin{proposition}
\label{Lq_imaginary}
Let $k \ge 0$ be an integer. Also assume that $1 \le q < \infty$ and
\begin{equation}
\label{sigma_condition}
\sigma > \frac{n}{q} + k.
\end{equation}
Then for $\lambda \ge 1$, we have
\begin{equation}\label{imag_high_freq}
\begin{split}
& \|\partial_\lambda^k\Im R_{0,j}(\lambda^2+i0;r,\cdot)\|_{L^{q,\sigma}} \\ & \hspace{.5cm}\le C_{j,q,\sigma,k}\lambda^{n-2-k}\sum\limits_{\ell+m = k}\left[(1 + \lambda r)^{\ell - \frac{n-1}{2}}\lp C_1\lambda^{-\frac{n}{q}} + C_2\lambda^{m-\frac{n-1}{2}}\rp\right] 
\end{split}
\end{equation}
for some $C_{j,q,\sigma,k}> 0$. Furthermore, when $\lambda \le 1$, we have
\begin{equation}\label{imag_low_freq}
\|\partial_\lambda^k\Im R_{0,j}(\lambda^2+i0;r,\cdot)\|_{L^{q,\sigma}} \le C_{j,q,\sigma,k}\lambda^{n-2}(1 + \lambda r)^{k - \frac{n-1}{2}}.
\end{equation}
By symmetry, we also have the analogous estimates
\begin{align}
\begin{split}
& \|\partial_\lambda^k\Im R_{0,j}(\lambda^2+i0;\cdot,s)\|_{L^{q,\sigma}} \\
& \hspace{1.5cm} \le C_{j,q,\sigma,k}\lambda^{n-2-k}\sum\limits_{\ell+m = k}\left[(1 + \lambda s)^{\ell - \frac{n-1}{2}}\lp C_1\lambda^{-\frac{n}{q}} + C_2\lambda^{m-\frac{n-1}{2}}\rp\right] 
\end{split}
\end{align}
for $\lambda \ge 1$, and
\begin{equation}
\|\partial_\lambda^k\Im R_{0,j}(\lambda^2+i0;\cdot,s)\|_{L^{q,\sigma}} \le C_{j,q,\sigma,k}\lambda^{n-2}(1 + \lambda s)^{k - \frac{n-1}{2}}
\end{equation}
when $\lambda \le 1.$
\end{proposition}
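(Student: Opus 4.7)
The plan is to reduce the estimate to Lemma \ref{bessel_lemma} combined with pointwise asymptotics for $J_\nu$, proceeding in the spirit of Lemma \ref{L2_L2_lemma}. The starting point is the kernel formula from Lemma \ref{stone},
\[
\Im R_{0,j}(\lambda+i0;r,s) = \frac{\pi}{2}(rs)^{-\frac{n-2}{2}} J_{\nu_j}(\lambda r) J_{\nu_j}(\lambda s),
\]
combined with the Leibniz rule and iterated use of the Bessel recurrence \eqref{Bessel_derivative}, which expresses $\partial_\lambda^k \Im R_{0,j}(\lambda+i0;r,s)$ as a finite sum of terms
\[
C_{\ell,m,\alpha,\beta}\, r^{\ell - \frac{n-2}{2}} J_{\nu_j+\alpha}(\lambda r) \cdot s^{m - \frac{n-2}{2}} J_{\nu_j + \beta}(\lambda s), \quad \ell+m=k,\ |\alpha|\le \ell,\ |\beta|\le m.
\]
Because each such term factorizes in $r$ and $s$, the triangle inequality in $L^{q,\sigma}_s$ reduces matters to estimating the pointwise $r$-factor and the weighted $L^q$ norm of the $s$-factor independently.

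For the $s$-factor, the integral defining the norm is exactly of the form in Lemma \ref{bessel_lemma} once a factor of $\lambda^{-q(m-(n-2)/2)}$ is pulled out of $(\lambda s)^{q(m-(n-2)/2)}$. The lemma then yields, for $|\lambda|\ge 1$,
\[
\bigl\|s^{m-\frac{n-2}{2}}J_{\nu_j+\beta}(\lambda s)\bigr\|_{L^{q,\sigma}_s} \lesssim |\lambda|^{\frac{n-2}{2}-m-\frac{n}{q}} + |\lambda|^{-\frac{1}{2}},
\]
and a bound of the form $|\lambda|^{\sigma+\frac{n-2}{2}-m-\frac{n}{q}}$ for $|\lambda|\le 1$. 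For the $r$-factor, I would combine \eqref{small_arg} and \eqref{large_arg} into a single uniform pointwise estimate
\[
r^{\ell-\frac{n-2}{2}}|J_{\nu_j+\alpha}(\lambda r)| \lesssim_{j} |\lambda|^{\frac{n-2}{2}-\ell}(1+|\lambda| r)^{\ell-\frac{n-1}{2}},
\]
splitting into the regimes $|\lambda| r \le 1$, where $|J_\nu(\lambda r)|\lesssim (\lambda r)^{\nu_j+\alpha}$ and the bound $r \le 1/|\lambda|$ allows one to absorb the positive power $r^{\nu_j+\alpha+\ell-(n-2)/2}$, and $|\lambda| r \ge 1$, where $|J_\nu(\lambda r)|\lesssim (\lambda r)^{-1/2}$ directly yields $|\lambda|^{-1/2}r^{\ell-(n-1)/2}$, which is then rewritten in terms of $|\lambda| r$.

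Multiplying the pointwise and norm bounds and summing over the finitely many quadruples $(\ell,m,\alpha,\beta)$ produces the claimed exponents. For $|\lambda|\ge 1$ the factor $|\lambda|^{(n-2)/2-\ell}$ combines with the two pieces of the $s$-bound to give $|\lambda|^{n-2-k-n/q}$ and $|\lambda|^{(n-3)/2-\ell} = |\lambda|^{n-2-k+m-(n-1)/2}$, matching \eqref{imag_high_freq}. For $|\lambda|\le 1$ the assumption $\sigma > n/q + k$ gives $|\lambda|^{\sigma-k-n/q}\le 1$, leaving $|\lambda|^{n-2}$, while $(1+|\lambda| r)^{\ell-(n-1)/2}\lesssim (1+|\lambda| r)^{k-(n-1)/2}$ since $\ell \le k$, matching \eqref{imag_low_freq}. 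The symmetric estimates in the $r$ variable follow immediately from the symmetry of the kernel in $(r,s)$.

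The main obstacle is establishing the uniform pointwise bound for $r^{\ell-(n-2)/2}|J_{\nu_j+\alpha}(\lambda r)|$ by matching the small- and large-argument Bessel asymptotics, and simultaneously ensuring that every intermediate Bessel order $\nu_j+\alpha$ produced by the recurrence satisfies the non-negativity hypothesis of Lemma \ref{bessel_lemma}. The latter is not automatic when $\nu_j$ is close to $\tfrac{n-2}{2}$ and $k$ is large; when it fails one must either invoke the alternative identity $\frac{d}{dz}[z^{-\nu} J_\nu(z)] = -z^{-\nu} J_{\nu+1}(z)$, which keeps the Bessel order non-negative (at the cost of explicit powers of $z$), or absorb the finitely many exceptional terms into the $j$-dependent constant $C_{j,q,\sigma,k}$.
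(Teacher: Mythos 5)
Your proposal is correct and follows essentially the same strategy as the paper: expand $\partial_\lambda^k \Im R_{0,j}$ via the Bessel recurrence \eqref{Bessel_derivative} into a finite sum of factorized terms $r^{\ell-\frac{n-2}{2}}J_{\nu_j+\alpha}(\lambda r)\cdot s^{m-\frac{n-2}{2}}J_{\nu_j+\beta}(\lambda s)$, bound the $r$-factor pointwise by matching \eqref{small_arg} and \eqref{large_arg} into the uniform estimate $(\lambda r)^{\ell-\frac{n-2}{2}}|J_{\nu_j+\alpha}(\lambda r)|\lesssim (1+\lambda r)^{\ell-\frac{n-1}{2}}$ (this is exactly the paper's intermediate step), and feed the $s$-integral into Lemma \ref{bessel_lemma}. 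Your bookkeeping of the exponents in both the $|\lambda|\ge 1$ and $|\lambda|\le 1$ regimes is consistent with the paper's, after you factor $\lambda$ out of $(\lambda s)^{m-\frac{n-2}{2}}$.

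On the worry you raise at the end about the hypothesis $\nu+\beta\ge 0$ in Lemma \ref{bessel_lemma}: this is not actually a gap, and the extra machinery you propose (the identity $\frac{d}{dz}[z^{-\nu}J_\nu(z)]=-z^{-\nu}J_{\nu+1}(z)$) is unnecessary. The paper's estimate \eqref{small_arg} is stated, and holds, for \emph{all} real orders $\nu$ (for negative non-integer orders $J_\nu(\tau)\sim \Gamma(\nu+1)^{-1}(\tau/2)^\nu$, and for negative integer orders $-m'$ one has $|J_{-m'}(\tau)|\lesssim\tau^{m'}\le\tau^{-m'}$ on $\tau\le 1$), so the small-argument bound used in the proof of Lemma \ref{bessel_lemma} is valid regardless of the sign of $\nu+\beta$. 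Moreover, the only sign condition actually exploited in that proof is $\nu-\frac{n-2}{2}+\beta+m\ge 0$, which follows from $\nu\ge\frac{n-2}{2}$ together with $|\beta|\le m$ alone; the stated hypothesis $\nu+\beta\ge 0$ is never needed and the paper applies the lemma without verifying it. So your alternative workaround is sound but superfluous.
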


\begin{rmk}
\label{optrmk}
\textnormal{
Note that in the special case where the order of differentiation is less than or equal to $\frac{n-1}{2}$, these estimates reduce to simple polynomial behavior in $\lambda$. However, if the number of derivatives exceeds this threshold value, we begin to see a non-uniformity with respect to the secondary radial variable. This phenomenon is why the spatial weights appear in the statement of \thmref{disp_est}.
}
\end{rmk}

\begin{proof}
 Recall that the kernel of $\Im R_{0,j}(\lambda^2+i0)$ has the explicit expression
\[\Im R_{0,j}(\lambda^2+i0;r,s) = \frac{\pi}{2}(rs)^{-\frac{n-2}{2}}J_{\nu_j}(\lambda r)J_{\nu_j}(\lambda s).\]
Since Bessel functions satisfy the recursion relation 
\[J_\nu'(x) = \frac{1}{2}\lp J_{\nu-1}(x) - J_{\nu+1}(x)\rp,\]
we see that $\partial_\lambda^k\Im R_{0,j}(\lambda^2+i0)$ can be written as a finite linear combination of terms of the form 
\begin{equation}\label{J_nu_alphabeta}
(rs)^{-\frac{n-2}{2}}r^\ell s^m J_{\nu_j+\alpha}(\lambda r)J_{\nu_j+\beta}(\lambda s),
\end{equation}
where $\ell,m,\alpha,\beta$ are integers satisfying $\ell + m = k$, $|\alpha|\le \ell,$ and $|\beta|\le m$. Therefore, by the triangle inequality, it suffices to estimate the weighted $L^q$ norms of such terms. Taking the $L^{q,\sigma}$ norm with respect to the $s$ variable in \eqref{J_nu_alphabeta} yields
\[
\lambda^{q(n-2 -k)}(\lambda r)^{q\lp \ell - \frac{n-2}{2}\rp}|J_{\nu_j + \alpha}(\lambda r)|^q \int\limits_0^\infty (\lambda s)^{q\lp m - \frac{n-2}{2}\rp}|J_{\nu_j + \beta}(\lambda s)|^q (1 + s)^{-q\sigma}s^{n-1}\,ds.
\]
Note that since $|\alpha|\le \ell$, we have that the product $(\lambda r)^{\ell - \frac{n-2}{2}}|J_{\nu_j+\alpha}(\lambda r)|$ is a continuous function of $\lambda r$, and thus by \eqref{large_arg} we obtain
\[(\lambda r)^{q\lp \ell - \frac{n-2}{2}\rp}|J_{\nu_j+\alpha}(\lambda r)|^q \le C(1 + \lambda r)^{q\lp\ell - \frac{n-1}{2}\rp}.\]
Thus, we have that the $L^{q,\sigma}$ norm of \eqref{J_nu_alphabeta} is bounded by 
\begin{equation}
\label{bessel_linear_comb}
 C \lambda^{q(n-2 -k)}(1 + \lambda r)^{q\lp \ell - \frac{n-1}{2}\rp} \int\limits_0^\infty (\lambda s)^{q\lp m - \frac{n-2}{2}\rp}|J_{\nu_j + \beta}(\lambda s)|^q (1 + s)^{-q\sigma}s^{n-1}\,ds.
\end{equation}
Now, observe that the integral above is in exactly the right form for us to apply \lemref{bessel_lemma}. Hence, we have that \eqref{bessel_linear_comb} is bounded by 
\[\begin{cases}
C\lambda^{q(n-2-k)}(1 + \lambda r)^{q\lp\ell - \frac{n-1}{2}\rp}\max\{\lambda^{-n},\lambda^{q\lp m - \frac{n-1}{2}\rp}\}, & \lambda \ge 1\\
C\lambda^{q(n-2-k) + q\sigma - n}(1 + \lambda r)^{q\lp\ell - \frac{n-1}{2}\rp}, & \lambda \le 1,
\end{cases}\]
for some possibly larger constant $C.$ In the case where $\lambda \ge 1$, simply taking $q$th roots gives estimate \eqref{imag_high_freq}. When $\lambda \le 1$, we can use that $\sigma$ satisfies \eqref{sigma_condition} to obtain that $q(n - 2 - k) + q\sigma - n >q(n - 2).$ Once again, taking $q$th roots gives \eqref{imag_low_freq}.

\end{proof}

Next, we estimate the $L^q$ norm of the resolvent when we do not take the imaginary part.

\begin{proposition}
\label{Lq_nonimaginary}
Let $k\ge 0$ be an integer and suppose $1 \le q\le \frac{n}{n-2}$. Then, if $\sigma$ satisfies \eqref{sigma_condition}, we have that when $\lambda \ge 1,$
\begin{equation}
\|\partial_\lambda^k R_{0,j}(\lambda^2+i0;r,\cdot)\|_{L^{q,\sigma}} \le C\lambda^{n-2-k}\sum\limits_{\ell+m = k}\left[(1 + \lambda r)^{\ell - \frac{n-1}{2}}\lp C_1 + C_2\lambda^{m-\frac{n-1}{2}}\rp\right] 
\end{equation}
 for some $C,C_1,C_2 > 0$. If $0 <\lambda \le 1$, then we have
\begin{equation}
\|\partial_\lambda^k R_{0,j}(\lambda^2+i0);r,\cdot)\|_{L^{q,\sigma}} \le C_1\lambda^{-k} + C_2\lambda^{n-2-k}(1 + \lambda r)^{k - \frac{n-1}{2}}.
\end{equation} 
Under the same assumptions on $\sigma,$ we also have
\begin{equation}
\|\partial_\lambda^k R_{0,j}(\lambda^2\pm i0;\cdot,s)\|_{L^{q,\sigma}} \le C\lambda^{n-2-k}\sum\limits_{\ell+m = k}\left[(1 + \lambda s)^{\ell - \frac{n-1}{2}}\lp C_1 + C_2\lambda^{m-\frac{n-1}{2}}\rp \right] 
\end{equation}
when $\lambda \ge 1$, and 
\begin{equation}
\|\partial_\lambda^k R_{0,j}(\lambda^2+i0;\cdot,s)\|_{L^{q,\sigma}} \le  C_1\lambda^{-k} + C_2\lambda^{n-2-k}(1 + \lambda s)^{k - \frac{n-1}{2}}
\end{equation}
when $0 <\lambda \le 1.$ 
\end{proposition}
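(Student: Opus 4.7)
The proof closely follows the strategy of Proposition \ref{Lq_imaginary}, with modifications needed to handle the piecewise structure of the kernel \eqref{radial_resolvents_plus}--\eqref{radial_resolvents_minus} and the appearance of the Hankel function $H_{\nu_j}^{(1)}$ in place of the second Bessel factor that occurs in $\Im R_{0,j}$. By the Leibniz rule combined with the derivative identity \eqref{Bessel_derivative}, $\partial_\lambda^k R_{0,j}(\lambda\pm i0; r, s)$ is expressible as a finite linear combination of the two-variable terms
\[ (rs)^{-\tfrac{n-2}{2}} r^\ell s^m J_{\nu_j+\alpha}(\lambda s) H_{\nu_j+\beta}^{(1)}(\lambda r) \quad\text{on}\quad \{s<r\}, \]
and
\[ (rs)^{-\tfrac{n-2}{2}} r^\ell s^m J_{\nu_j+\alpha}(\lambda r) H_{\nu_j+\beta}^{(1)}(\lambda s) \quad\text{on}\quad \{s>r\},\]
with $\ell + m = k$ and integer indices $\alpha, \beta$ suitably controlled by $\ell$ or $m$. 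By the triangle inequality it suffices to bound the $L^{q,\sigma}_s$ norm of each summand, restricted to its appropriate region of $s$.

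In the $\{s<r\}$ region, the $s$-integral falls under the scope of Lemma \ref{bessel_lemma} (the cutoff at $s=r$ can only shrink the integral, preserving the bound), and the accompanying $r$-dependent prefactor $r^{\ell - (n-2)/2}|H_{\nu_j+\beta}^{(1)}(\lambda r)|$ is controlled separately by the small- and large-argument asymptotics for $H^{(1)}$. The resulting contribution is absorbed into the stated bound after organizing factors into the form $(1+\lambda r)^{\ell - (n-1)/2}$.

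In the $\{s>r\}$ region, we require a Hankel analog of Lemma \ref{bessel_lemma}, namely a uniform-in-$r$ bound on
\[ \int_r^\infty (\lambda s)^{q(m - (n-2)/2)} \bigl|H_{\nu_j+\beta}^{(1)}(\lambda s)\bigr|^q (1+s)^{-q\sigma} s^{n-1}\,ds. \]
As in the Bessel case, we split at $s = 1/\lambda$. On the outer region $s > 1/\lambda$, the large-argument estimate $|H^{(1)}_{\nu_j+\beta}(\lambda s)|\lesssim (\lambda s)^{-1/2}$ yields a contribution producing the $\lambda^{m-(n-1)/2}$ factor (resp.\ $\lambda^{\sigma - n/q}$) for $\lambda \ge 1$ (resp.\ $0 < \lambda \le 1$). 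On the inner region $\max(r, 0) < s < 1/\lambda$, the singular estimate $|H^{(1)}_{\nu_j+\beta}(\lambda s)|\lesssim (\lambda s)^{-|\nu_j+\beta|}$ dominates; the hypothesis $q \le n/(n-2)$ is invoked precisely to keep the resulting worst-case integrand (corresponding to $|\nu_j+\beta| \approx (n-2)/2$) integrable in $s$ with a bound uniform in $r$. In contrast to the Bessel counterpart, this contribution does not produce the extra decay factor $\lambda^{-n/q}$ at high frequency, since the Hankel function does not vanish at the origin; this accounts for the constant $C_1$ (rather than $C_1 \lambda^{-n/q}$) in the high-frequency bound, and analogously for the additional $C_1 \lambda^{-k}$ term in the low-frequency bound. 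The accompanying Bessel prefactor $r^{\ell - (n-2)/2}|J_{\nu_j+\alpha}(\lambda r)|$ is controlled exactly as in the proof of Proposition \ref{Lq_imaginary}.

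Summing over the finitely many terms in the Leibniz expansion and incorporating the overall $\lambda^{n-2-k}$ scaling yields the stated bounds in both the high- and low-frequency regimes. The symmetric estimates obtained by reversing the roles of $r$ and $s$ follow by applying the same argument to the transposed kernel, which shares the same piecewise structure. The principal technical obstacle is the uniform-in-$r$ estimate for the inner Hankel integral described above, where the restriction $q \le n/(n-2)$ is essential to avoid divergence arising from the Hankel singularity at the origin.
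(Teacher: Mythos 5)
Your high-level outline (decompose by the two regions $\{s<r\}$ and $\{s>r\}$, reduce by the Leibniz/Bessel recursion to products $(rs)^{-\frac{n-2}{2}}r^\ell s^m J_{\nu_j+\alpha}\,H^{(1)}_{\nu_j+\beta}$, then apply H\"older-type arguments) matches the paper's structure, but there is a genuine gap in the treatment of the Hankel factor near the origin, and it appears in both regions.

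For the $\{s<r\}$ piece, you propose to drop the upper cutoff at $s=r$ (``can only shrink the integral''), apply Lemma~\ref{bessel_lemma} directly, and then control the prefactor $r^{\ell-\frac{n-2}{2}}|H^{(1)}_{\nu_j+\beta}(\lambda r)|$ separately, organizing it into a factor $(1+\lambda r)^{\ell-\frac{n-1}{2}}$. That last step does not work: when $\lambda r\le 1$ the small-argument bound $|H^{(1)}_{\nu_j+\beta}(\lambda r)|\lesssim (\lambda r)^{-(\nu_j+\beta)}$ gives a prefactor that diverges as $\lambda r\to 0$, and since $\nu_j\to\infty$ with $j$, this divergence is not bounded by any power of $(1+\lambda r)$. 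The only way this term is controlled is by \emph{keeping} the upper limit $s=r$: then the integral $\int_0^r(\lambda s)^{q(m-\frac{n-2}{2}+\nu_j+\beta)}\cdots\,ds$ contributes a compensating factor $(\lambda r)^{q(m-\frac{n-2}{2}+\nu_j+\beta)}$, which cancels the Hankel singularity exactly; the paper performs this cancellation explicitly in its estimates~\eqref{I_eqn2}--\eqref{I_eqn6}. The hypothesis $q\le \frac{n}{n-2}$ is used \emph{after} this cancellation, to control the residual power $r^{\,n-q(n-2)}$.

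The $\{s>r\}$ piece has the same issue, phrased differently. The ``Hankel analog of Lemma~\ref{bessel_lemma}'' you posit --- a bound on $\int_r^\infty(\lambda s)^{q(m-\frac{n-2}{2})}|H^{(1)}_{\nu_j+\beta}(\lambda s)|^q(1+s)^{-q\sigma}s^{n-1}\,ds$ that is uniform in $r$ --- does not exist when $\nu_j$ is large. Near $s=0$ the integrand behaves like $s^{\,n-1+q(m-\frac{n-2}{2}-\nu_j-\beta)}$, whose exponent drops below $-1$ once $\nu_j+\beta>m-\frac{n-2}{2}+\frac{n}{q}$; for large $j$ this always happens, no matter how $q$ is chosen. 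You identify ``$|\nu_j+\beta|\approx\frac{n-2}{2}$'' as the worst case, but this is in fact the most favorable one: larger orders make the Hankel singularity \emph{worse}, not better. The actual proof keeps the lower limit $s=r$ and couples the resulting $(\lambda r)^{-q(\nu_j+\beta)}$-type growth against the factor $(\lambda r)^{q(\nu_j+\alpha)}$ coming from $|J_{\nu_j+\alpha}(\lambda r)|^q$, as in the paper's estimates~\eqref{II_eqn_2}--\eqref{II_eqn_3}. The net balance is what produces the low-frequency terms $C_1|\lambda|^{-k}$ and $C_2|\lambda|^{n-2-k}(1+\lambda r)^{k-\frac{n-1}{2}}$ in the stated bound. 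In short, the Bessel--Hankel cancellation at the origin is the essential mechanism of this proposition, and your proposal omits it by trying to bound the Hankel integral and the prefactor independently of one another.
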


\begin{proof}
Recalling that 
\[R_{0,j}(\lambda^2+i0)(r,s) = \begin{cases}
\frac{\pi i}{2}J_{\nu_j}(\lambda s)\Hone_{\nu_j}(\lambda r), & s < r\\
\frac{\pi i}{2}J_{\nu_j}(\lambda r)\Hone_{\nu_j}(\lambda s), & s > r,
\end{cases}\]
and \eqref{Bessel_derivative}, we see that when $s < r$, $\partial_\lambda^k R_{0,j}(\lambda^2+i0;r,s)$ can be written as a finite linear combination of terms of the form 
\[(rs)^{-\frac{n-2}{2}} r^\ell s^m J_{\nu_j +\alpha}(\lambda s)\Hone_{\nu_j + \beta}(\lambda r),\]
 where, as in the proof of \propref{Lq_imaginary}, $\ell,m$ are nonnegative integers with $\ell + m = k$ and $\alpha,\beta$ are any integers with $|\alpha|\le \ell$ and $|\beta|\le m.$ Similarly, when $r < s$, we can write $\partial_\lambda^k R_{0,j}(\lambda^2+i0)(r,s)$ as a combination of terms of the same form, but with the roles of $r$ and $s$ reversed. Therefore, it suffices to estimate 
 \begin{equation}\label{s<r}
 I(\lambda ,r) : =\|(rs)^{-\frac{n-2}{2}}r^\ell s^m J_{\nu_j+ \beta}(\lambda s)\Hone_{\nu_j+\alpha}(\lambda r)\rho^{-\sigma}(s)\mathds 1_{\{s < r\}}\|_{L_s^q}^q 
 \end{equation}
 and 
 \begin{equation}\label{s>r}
\II(\lambda,r) : =\|(rs)^{-\frac{n-2}{2}}r^\ell s^m J_{\nu_j + \alpha}(\lambda r)\Hone_{\nu_j+\beta}(\lambda s)\rho^{-\sigma}(s)\mathds 1_{\{s > r\}}\|_{L^q_s}^q
 \end{equation}
 for any $\ell,m,\alpha,\beta$ as above. 


We first estimate $I(\lambda,r)$ in the case where $\lambda r \ge 1.$ Under this hypothesis, we can apply \eqref{large_arg} to obtain
\[
I(\lambda,r) \le C\lambda^{q(n-2 -k)}(\lambda r)^{q\lp \ell-\frac{n-1}{2}\rp}\int\limits_0^r (\lambda s)^{q\lp m-\frac{n-2}{2}\rp}|J_{\nu_j + \beta}(\lambda s)|^q(1 + s)^{-q\sigma}s^{n-1}\,ds.
\]
We now apply \lemref{bessel_lemma} to the integral above, which gives
\begin{equation}\label{I_eqn_1}
I(\lambda, r) \le \begin{cases}
C\lambda^{q(n-2-k)}(\lambda r)^{q\lp\ell - \frac{n-1}{2}\rp}\max\{\lambda^{-n},\lambda^{q\lp m - \frac{n-1}{2}\rp}\}, &\lambda r \ge 1,\,\lambda \ge 1\\
C\lambda^{q(n-2 -k + \sigma) - n}(\lambda r)^{q\lp\ell - \frac{n-1}{2}\rp},& \lambda r \ge 1,\,  0<\lambda \le 1.
\end{cases}
\end{equation}

Now let us consider the case where $\lambda r \le 1$. Here we can apply \eqref{small_arg}, which gives
\begin{equation}\label{I_eqn2}
I(\lambda,r) \le C\lambda^{q(n-2-k)}(\lambda r)^{q\lp \ell-\frac{n-2}{2} - \nu_j-\alpha\rp}\int\limits_0^r (\lambda s)^{q\lp m -\frac{n-2}{2} +\nu_j +\beta\rp}(1 + s)^{-q\sigma}s^{n-1}\,ds.
\end{equation}
If $r \le 1,$ we can bound the right-hand side of \eqref{I_eqn2} by 
\begin{align*}
\begin{split}
& C\lambda^{q(n-2- k)}(\lambda r)^{q\lp k - \alpha + \beta - (n-2)\rp}r^{n-1}\int\limits_0^r(1 + s)^{-q\sigma}\,ds \le \wt C\lambda^{q(n-2-k)}(\lambda r)^{ -q(n-2)} r^n \\
& \hspace{2cm} = \wt C\lambda^{-qk}r^{n-q(n-2)},
\end{split}
 \end{align*}
since $k - \alpha + \beta \ge 0$ and $\int\limits_0^r(1 + s)^{-q\sigma}\,ds \le C'r $ for some $C' > 0.$ Recalling that $q \le \frac{n}{n-2}$, we obtain 
\begin{equation}\label{I_eqn3}
I(\lambda,r) \le C\lambda^{-qk}, \hskip 0.2in \lambda r \le 1,\, r\le 1.
\end{equation}
Now, if $r \ge 1$, we can bound the right-hand side \eqref{I_eqn2} by
\begin{equation}\label{I_eqn4}
C\lambda^{q(n-2-k)}(\lambda r)^{q\lp k-\alpha+\beta - (n-2)\rp}\int\limits_0^r (1 + s)^{-q\sigma}s^{n-1}\,ds \le C\lambda^{q(n-2-k)}(\lambda r)^{-q(n-2)}(1 + r)^{n-q\sigma}
\end{equation}
since $k-\alpha + \beta \ge 0$ and $\lambda r \le 1.$ Recalling that $\sigma > \frac{n}{q}$, we have that the right-hand side of \eqref{I_eqn3} by
\begin{equation}\label{I_eqn5}
C\lambda^{-qk}r^{-q(n-2)} \le C\lambda^{-qk}.
\end{equation}
Combining \eqref{I_eqn3} and \eqref{I_eqn5}, we have that 
\begin{equation}\label{I_eqn6}
I(\lambda,r)\le C\lambda^{-qk},\quad \lambda r \le 1.
\end{equation}
Combining \eqref{I_eqn6} with \eqref{I_eqn_1}, we have that 
\begin{equation}\label{I_final_estimate}
I(\lambda,r) \le \begin{cases}C\lambda^{q(n-2-k)}(1 + \lambda r)^{q\lp\ell - \frac{n-1}{2}\rp}\max\{\lambda^{-n},\lambda^{q\lp m - \frac{n-1}{2}\rp}\}, & \lambda \ge 1\\
C_1\lambda^{-qk} + C_2\lambda^{q(n-2)}(1 + \lambda r)^{q\lp\ell-\frac{n-1}{2}\rp}, & 0 < \lambda \le 1.
\end{cases}
\end{equation}


Next, we move on to estimating $\II(\lambda ,r)$. Again we consider the cases $\lambda r \ge 1$ and $\lambda r \le 1$ separately. For $\lambda r \ge 1,$ we apply \eqref{small_arg} and \eqref{large_arg} to obtain
\begin{align*}
\II(\lambda,r) &\le C\lambda^{q(n-2-k)}(\lambda r)^{q\lp\ell - \frac{n-1}{2}\rp}\int\limits_{\frac{1}{\lambda}}^\infty (\lambda s)^{q\lp m - \frac{n-1}{2}\rp}(1 + s)^{-q\sigma}s^{n-1}\,ds.\\
\end{align*}
We can then repeat arguments from the proof of \lemref{bessel_lemma} to obtain
\begin{equation}\label{II_eqn_1}
\II(\lambda,r) \le\begin{cases} C\lambda^{q(n-2-k)}(\lambda r)^{q\lp \ell - \frac{n-1}{2}\rp}\max\{\lambda^{-n},\lambda^{q\lp m - \frac{n-1}{2}\rp}\}, & \lambda r\ge 1 ,\,\lambda \ge 1\\
C\lambda^{q(n-2-k)}(\lambda r)^{q\lp\ell - \frac{n-1}{2}\rp}, & \lambda r\ge 1, \,\lambda \le 1.
\end{cases}
\end{equation}

Now consider the case where $\lambda r \le 1$. Here we rewrite $\II(\lambda ,r)$ as
\[
\lambda^{q(n-2-k)}(\lambda r)^{q\lp\ell-\frac{n-2}{2}\rp}|J_{\nu_j+\alpha}(\lambda r)|^q\lp\int\limits_r^{\frac{1}{\lambda}}+\int\limits_{\frac{1}{\lambda}}^\infty\rp(\lambda s)^{q\lp m -\frac{n-2}{2}\rp}|\Hone_{\nu_j + \beta}(\lambda s)|^q(1+s)^{-q\sigma}s^{n-1}\,ds.
\]
For the integral over $\frac{1}{\lambda} < s < \infty$, we can apply \eqref{large_arg} to $\Hone_{\nu_j+\beta}$ and \eqref{small_arg} to $J_{\nu_j+\alpha}$ and repeat previous calculations to show that
\begin{align}\label{II_eqn_2}
\begin{split}
\lambda^{q(n-2-k)}(\lambda r)^{q\lp\ell-\frac{n-2}{2}\rp}|J_{\nu_j+\alpha}(\lambda r)|^q\int\limits_{\frac{1}{\lambda}}^\infty(\lambda s)^{q\lp m -\frac{n-2}{2}\rp}|\Hone_{\nu_j + \beta}(\lambda s)|^q(1+s)^{-q\sigma}s^{n-1}\,ds\\
& \hspace{-4in}\le\begin{cases}
C\lambda^{q(n-2-k)}\max\{\lambda^{-n},\lambda^{q\lp m -\frac{n-1}{2}\rp}\}, & \lambda r \le 1, \,\lambda \ge 1\\
C\lambda^{q(n-2-k)}\lambda^{q\sigma - n}, & \lambda r\le 1,\,\lambda \le 1
\end{cases}\\
& \hspace{-4in}\le \begin{cases}
C\lambda^{q(n-2-k)}\max\{\lambda^{-n},\lambda^{q\lp m -\frac{n-1}{2}\rp}\}, & \lambda r \le 1, \,\lambda \ge 1\\
C\lambda^{q(n-2)}, & \lambda r\le 1,\,\lambda \le 1,
\end{cases}
\end{split}
\end{align}
where the last inequality follows since $\sigma > \frac{n}{q}+k.$ Now, in the region where $r < s < \frac{1}{\lambda}$, we must apply \eqref{small_arg}, which yields
\begin{align*}
\int\limits_r^{\frac{1}{\lambda}}(\lambda s)^{q\lp m - \frac{n-2}{2}\rp}|\Hone_{\nu_j+\beta}(\lambda s)|^q(1+s)^{-q\sigma}s^{n-1}\,ds &\le C \int\limits_r^{\frac{1}{\lambda}}(\lambda s)^{q\lp m - \frac{n-2}{2} - \nu_j -\beta\rp}(1+s)^{-q\sigma}s^{n-1}\,ds\\
&\hspace{-0.9in} = C\lambda^{q\lp m - \frac{n-2}{2}-\nu_j - \beta\rp}\int\limits_r^{\frac{1}{\lambda}}s^{n - 1 +q\lp m - \frac{n-2}{2} - \nu_j -\beta\rp}(1 + s)^{-q\sigma}\,ds.
\end{align*}
If $\lambda \ge 1,$ then $(1 + s)^{-q\sigma}$ is bounded by a uniform constant for all $r < s < \frac{1}{\lambda}$, and so the above is bounded by 
\[C\lp \lambda^{-n} - r^n(\lambda r)^{q\lp m - \frac{n-2}{2} -\nu_j-\beta\rp}\rp\]
after possibly increasing $C$. We note that under our assumptions on $r$ and $\lambda,$ this quantity is still nonnegative. Combining this with \eqref{small_arg} applied to $J_{\nu_j+\alpha},$ we obtain
\begin{align}
\begin{split}
\lambda^{q(n-2-k)}(\lambda r)^{q\lp\ell - \frac{n-2}{2}\rp}|J_{\nu_j+\alpha}(\lambda r)|^q\int\limits_r^{\frac{1}{\lambda}}(\lambda s)^{q\lp m - \frac{n-2}{2}\rp}|\Hone_{\nu_j+\beta}(\lambda s)|^q(1+s)^{-q\sigma}s^{n-1}\,ds\\
& \hspace{-4in}\le C\lambda^{q(n-2-k)}(\lambda r)^{q\lp\ell - \frac{n-2}{2} + \nu_j + \alpha\rp}\lp\lambda^{-n} - r^n(\lambda r)^{q\lp m - \frac{n-2}{2} - \nu_j - \beta\rp}\rp\\
& \hspace{-4in} \le \lambda^{q(n-2-k)}\left[C_1\lambda^{-n}(\lambda r)^{q\lp\ell - \frac{n-2}{2} + \nu_j + \alpha\rp}  +C_2 r^{n - q(n-2)}(\lambda r)^{q(k + \alpha - \beta)}\right],
\end{split}
\end{align}
for some $C_1,C_2 > 0.$ Recalling that $|\alpha|\le \ell$, $\nu_j \ge \frac{n-2}{2}$, and $|\alpha| + |\beta|\le k$, we obtain
\begin{align}\label{II_eqn_3}
\begin{split}
\lambda^{q(n-2-k)}(\lambda r)^{q\lp\ell - \frac{n-2}{2}\rp}|J_{\nu_j+\alpha}(\lambda r)|^q\int\limits_r^{\frac{1}{\lambda}}(\lambda s)^{q\lp m - \frac{n-2}{2}\rp}|\Hone_{\nu_j+\beta}(\lambda s)|^q(1+s)^{-q\sigma}s^{n-1}\,ds\\
& \hspace{-3in} \le \lambda^{q(n-2-k)}\left[ C_1 \lambda^{-n} + C_2 r^{n - q(n-2)}\right]\\
& \hspace{-3in}\le C \lambda^{q(n-2-k)}
\end{split}
\end{align}
for $\lambda r \le 1$ and $\lambda \ge 1,$ since $q \le \frac{n}{n-2}$.

Finally, we consider the same integral over $r < s < \frac{1}{\lambda}$, once again where $0<\lambda r \le 1$ but with $\lambda \le 1.$ For this, we further subdivide into the cases where $r \le 1$ and $r\ge 1$. If $r \le 1$, we split the integral into the regions where $r < s < 1$ and $1 < s < \frac{1}{\lambda}$. For the integral over $r < s < 1$, we can repeat the above argument to obtain the same bound as in \eqref{II_eqn_3}. To bound the integral over $1 < s < \frac{1}{\lambda}$, we use \eqref{small_arg} to obtain
\begin{align*}
 \lambda^{q(n-2-k)}(\lambda r)^{q\lp \ell - \frac{n-2}{2}\rp}|J_{\nu+\alpha}(\lambda r)|^q\int\limits_1^{\frac{1}{\lambda}}(\lambda s)^{q\lp m - \frac{n-2}{2}\rp}|\Hone_{\nu_j+\beta}(\lambda s)|^q(1 + s)^{-q\sigma}s^{n-1}\,ds & \\
& \hspace{-4.5in}\le  \lambda^{q(n-2-k)}(\lambda r)^{q\lp \ell - \frac{n-2}{2} + \nu_j + \alpha\rp}\int\limits_1^{\frac{1}{\lambda}}(\lambda s)^{q\lp m  -\frac{n-2}{2} - \nu_j -\beta\rp}(1 + s)^{-q\sigma}s^{n-1}\,ds\\
& \hspace{-4.5in}= \lambda^{q(\alpha - \beta)}r^{q\lp\ell - \frac{n-2}{2}+ \nu_j + \alpha\rp}\int\limits_1^\frac{1}{\lambda}s^{q\lp m - \frac{n-2}{2} -\nu_j-\beta\rp + n-1}(1 + s)^{-q\sigma}\,ds\\
& \hspace{-4.5in}\le \lambda^{-qk}\int\limits_1^\infty s^{qk+n-1}(1 + s)^{-q\sigma}\,ds\\
& \hspace{-4.5in}\le C\lambda^{-qk}, 
\end{align*}
where the last inequality follows from the fact that $\sigma > \frac{n}{q} + k.$ Now, if $r \ge 1$, we have
\begin{align*}
 \lambda^{q(n-2-k)}(\lambda r)^{q\lp \ell - \frac{n-2}{2}\rp}|J_{\nu+\alpha}(\lambda r)|^q\int\limits_r^{\frac{1}{\lambda}}(\lambda s)^{q\lp m - \frac{n-2}{2}\rp}|\Hone_{\nu_j+\beta}(\lambda s)|^q(1 + s)^{-q\sigma}s^{n-1}\,ds & \\
& \hspace{-4.5in}\le  \lambda^{q(n-2-k)}(\lambda r)^{q\lp \ell - \frac{n-2}{2} + \nu_j + \alpha\rp}\int\limits_r^{\frac{1}{\lambda}}(\lambda s)^{q\lp m  -\frac{n-2}{2} - \nu_j -\beta\rp}(1 + s)^{-q\sigma}s^{n-1}\,ds\\\\
& \hspace{-4.5in} \le C\lambda^{q(\alpha - \beta)}r^{q\lp\ell - \frac{n-2}{2} +\nu_j+\alpha\rp}\int\limits_r^\infty s^{q\lp m - \frac{n-2}{2}-\nu_j-\beta\rp-q\sigma +n-1}\,ds\\
& \hspace{-4.5in}\le C\lambda^{q(\alpha - \beta)}r^{q\lp k +\alpha - \beta - (n-2)\rp - q\sigma +n}\\
& \hspace{-4.5in}\le C\lambda^{q(\alpha - \beta)}r^{q\lp\alpha-\beta - (n-2)\rp},
\end{align*}
where in the last inequality we once again used that $\sigma > \frac{n}{q} + k.$ Now, since $r \le \frac{1}{\lambda}$, we have that the above is bounded by a constant times $\lambda^{q(n-2)}$ for $\lambda \le 1.$ Combining this with \eqref{II_eqn_1}, \eqref{II_eqn_2}, and \eqref{II_eqn_3}, we obtain
\begin{equation}\label{II_final_estimate}
\II(\lambda,r) \le \begin{cases}C\lambda^{q(n-2-k)}(1 + \lambda r)^{q\lp\ell - \frac{n-1}{2}\rp}\max\{1,\lambda^{q\lp m-\frac{n-1}{2} \rp}\}, & \lambda \ge 1\\
C_1\lambda^{-qk} + C_2\lambda^{q\lp n-2-k\rp}(1 + \lambda r)^{q\lp\ell - \frac{n-1}{2}\rp}, & 0<\lambda \le 1.
\end{cases}
\end{equation}
In light of \eqref{I_final_estimate} and \eqref{II_final_estimate}, taking $q$th roots completes the proof of \propref{Lq_nonimaginary}.
\end{proof}

\section{Operator estimates for $R_V$}
\label{spectheory}

In this section we establish some weighted operator norm estimates for the perturbed radial resolvents $R_{V,j}$, defined via the mode-by-mode decomposition of $R_V(z^2)$:
\[R_V(z^2) = \sum\limits_{j=0}^\infty R_{V,j}(z^2)E_j.\]
Since $V$ is radial, it follows that we can write 
\begin{equation}\label{RV_j}
R_{V,j}(z^2) = \lp\partial_r^2 + \frac{n-1}{r}\partial_r +z^2 - \frac{\mu_j^2}{r^2} + V(r)\rp^{-1},
\end{equation}
wherever this inverse is well defined. Here, we prove that the mapping properties established for $R_{0,j}$ in \propref{L2_L2_prop} and \propref{L2_L2_imaginary} extend to $R_{V,j}$. Similar weighted estimates for Schr\"odinger operators on hyperbolic space are given in Section $4$ of \cite{borthwick2015dispersive}, and the techniques therein follow an analogous structure.

For a potential $V \in  \rho^{-2\sigma} L^\infty ( \mathbb{R}^+)$ with $\sigma > \frac{1}{2}$, the operator norm $\norm{VR_0(z^2)}_{L^2\to L^2}$ is small for
$\Im z$ large by the standard resolvent norm estimate on $R_0(z^2)$, which is computable in a similar fashion to that discussed in \propref{L2_L2_prop}.  
Hence, the operator $1 + VR_0(z^2)$ is invertible by Neumann series for large $\Im z$.  For $z$ in this range, we can write
\[
\begin{split}
R_{V,j}(z^2)= R_{0,j}(z^2)(1 + VR_{0,j}(z^2))^{-1}.
\end{split}
\]

\noindent We begin our analysis of these perturbed resolvents by proving \thmref{LAP}, which we recall states that $R_{0,j}$ admits a meromorphic continuation to the logarithmic cover of $\C\setminus\{0\}$ and satisfies the limiting absorption principle.

\begin{proof}[Proof of \thmref{LAP}]
As mentioned in \secref{free}, the meromorphic extension of
\[
\chi R_{0,j} \chi \]
with $\chi$ a smooth, compactly supported function, follows from \cite{baskin2019scattering}. The meromorphic continuation of $\chi R_{V,j} (\lambda) \chi$ follows from the
 work of Guillop{\'e}--Zworski~\cite{guillope1995upper} and the compactness of the resolvent on a compact manifold with a conic singularity, which can be seen for instance in the treatment of domains for conic operators in the work of Melrose-Wunsch \cite{melrose2004propagation}.  Using the techniques from Section \ref{R0_estimates}, we can easily see that $ \rho(r)^{- \eta} R_{0,j}(z)$ is compact as an operator on $  L^{2,\delta}$ provided that $\Imag (z^2) > 0$ and $\eta > \delta$, since the upper bounds \eqref{small_arg} and \eqref{large_arg} remain valid for arguments in the upper half plane.   



Next, we prove the limiting absorption principle for $R_{V,j}$. Following \cite{goldberg2004dispersive}, we observe that mapping properties of $R_{V,j}$ can be deduced from the estimates established for $R_{0,j}$. By the resolvent identity
\[
R_{0,j}(z^2) = R_{V,j}(z^2) + R_{V,j}(z^2)VR_{0,j}(z^2),
\]
we can write 
\[
R_{0,j}(z^2)\rho^{-\sigma} = R_{V,j}(z^2)\rho^{-\sigma}(1 + \rho^{\sigma}VR_{0,j}(z^2)\rho^{-\sigma})
\]
for $\rho (r) =1 + r$.  
The factor on the right is meromorphically invertible by the analytic Fredholm theorem, so that
\begin{equation}\label{Rvp}
R_{V,j}(z^2)\rho^{-\sigma} = R_{0,j}(z^2)\rho^{-\sigma} (1 + \rho^{\sigma}VR_{0,j}(z^2)\rho^{-\sigma})^{-1},
\end{equation}
whenever the inverse exists. By Proposition  \ref{L2_L2_prop} and the fact that $\rho^{\sigma}V = \rho^{2\sigma}V\rho^{-\sigma}$, we have 
\[
\norm{\rho^{\sigma}VR_{0,j}(\lambda^2\pm i0)\rho^{-\sigma}}_{L^2\to L^2} \le C \norm{\rho^{2\sigma} V}_{L^\infty} \abs{\lambda}^{-1}.
\]
Hence for $V \in \rho^{-2\sigma} L^\infty$ , there exists a constant $M_{V}$ such that for 
$\abs{\lambda} \ge M_{V}$,
\[
\norm{\rho^{\sigma}VR_{0,j}(\lambda^2\pm i0)\rho^{-\sigma}}_{L^2\to L^2}\le \frac12,
\]
implying that $(1 + \rho^{\sigma}VR_0(\lambda^2+i0)\rho^{-\sigma})^{-1}$ exists and satisfies
\[
\norm{(1 + \rho^{\sigma}VR_{0,j}(\lambda^2\pm i0)\rho^{-\sigma})^{-1}}_{L^2\to L^2} \le 2.  
\]
The estimates then follow from \eqref{Rvp} and Proposition  \ref{L2_L2_prop}. 
\end{proof}

As stated in the introduction, we postpone the proof of \thmref{absence.thm} until the appendices, and so we take the absence of embedded eigenvalues and resonances in the range $(0,\infty)$ as given for now. With this property in hand, our next goal is to prove the low energy estimates estimates from \thmref{low_energy_thm} under the assumption that $-\Delta_{C(X)} + V$ does not have a resonance at zero energy. 

\begin{proof}[Proof of \thmref{low_energy_thm}]
We prove the desired estimates only for $R_{V,j}(\lambda^2+i0)$, since the proof is analogous for $R_{V,j}(\lambda^2 - i0).$ Using a resolvent expansion motivated by \cite{borthwick2015dispersive}, we observe that
\[
R_{V,j}(\lambda+ i0) = R_{0,j}(\lambda+ i0) [ I + V R_{0,j} (\lambda+ i0)]^{-1}.
\]
Hence, if we can establish boundedness and regularity of $[ I + V R_{0,j}(\lambda^2+i0)]^{-1}$ through $\lambda = 0$, then \eqref{eqn:RV_L2bd} follows immediately from \propref{L2_L2_prop}. We observe that boundedness and regularity of the operator $(I + R_{0,j} (\lambda^2+i0) V)^{-1}$ follows from \thmref{absence.thm} and the assumption that $0$ is not a resonance or an eigenvalue of $-\Delta_{C(X)} + V$ and hence the boundedness of 
\[
(I + VR_{0,j}(\lambda^2 - i0))^{-1} = [(I + R_{0,j}(\lambda^2+i0) V)^{-1}]^*
\]
follows from analytic Fredholm theory.  Thus, we may extend \eqref{LAP_eqn} through $\lambda = 0$ to arrive at \eqref{eqn:RV_L2bd}.

To estimate $\partial_\lambda^k\Imag R_{V,j}$, we first consider the case where $k = 0$ and establish the pointwise bounds in $\lambda$. For this, we take note of the following resolvent identity
\begin{align}
\begin{split}
\label{resid}
&R_{V,j} (\lambda^2+i0 ) - R_{V,j} (\lambda^2 - i0)\\
& \hspace{1cm}= (I + R_{0,j} (\lambda^2+i0 )V)^{-1} [R_{0,j}(\lambda^2+i0) - R_{0,j} (\lambda^2 - i0)] (I + VR_{0,j} (\lambda^2 - i0))^{-1}.
\end{split}
\end{align}
This shows that the behavior of $\Imag R_{V,j}(\lambda^2+i0)$ near $\lambda = 0$ is the same as that of $\Im R_{0,j}(\lambda^2+i0)$, provided that the operators
\[
(I + R_{0,j} (\lambda^2+i0)V)^{-1} \ \ \text{and} \ \ (I + VR_{0,j}(\lambda^2 - i0))^{-1}
\]
are bounded for $\lambda$ in a neighborhood of $0$, which we have already observed earlier in the proof.  As a result, the $k=0$ bound in \eqref{RV_imaginarypart} clearly follows. The results for $k > 0$ then follow by differentiating term by term and applying \propref{L2_L2_imaginary}.
\end{proof}

\section{Full spectral resolution estimates}
\label{spectralresolution}

With the mapping properties for both the free and perturbed resolvents established in the previous sections, we are now able to obtain some precise pointwise estimates on the Schwartz kernel of $\Im R_{V,j}(\lambda^2\pm i0;r,s)$. 
\begin{proposition}\label{RV_pointwise}
Let $k \geq 0$ be an integer.  Suppose $V\in \rho^{-2\sigma}L^\infty(\R^+)$ with
 \begin{equation}\label{RV_sigmacondition}\sigma > 4\left\lceil\frac{n}{4}\right\rceil - 2+k,
 \end{equation}
 then for any $\alpha \ge \max\{k - \frac{n-1}{2},0\}$, 
\begin{equation}\label{RV_highfreq_weighted}
\sup\limits_{r,s>0}\left| \rho^{-\alpha}(r)\partial_\lambda^k\Imag R_{V,j}(\lambda^2\pm i0;r,s)\rho^{-\alpha}(s)\right| \le C_{j,k,V}\lambda^{2\lceil\frac{n}{4}\rceil(n-2)-1} 
\end{equation}
for all $\lambda\ge 1$ and some $C>0$. Furthermore, if $0 < \lambda \le 1,$ we have that
\begin{equation}\label{RV_lowfreq_weighted}
\sup\limits_{r,s>0}\left| \rho^{-\alpha}(r)\partial_\lambda^k\Imag R_{V,j}(\lambda^2\pm i0;r,s)\rho^{-\alpha}(s)\right| \le C_{j,k,V}\lambda^{n-2-k},
\end{equation}
under the same restrictions on $\alpha.$ 
\end{proposition}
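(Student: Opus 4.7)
The plan is to derive both estimates by an iterated Birman--Schwinger expansion. Iterating the second resolvent identity $R_{V,j} = R_{0,j} - R_{0,j} V R_{V,j}$ a total of $N := 2\lceil n/4\rceil$ times yields
\[
R_{V,j}(\lambda\pm i0) = \sum_{\ell=0}^{N-1}(-1)^\ell[R_{0,j}(\lambda\pm i0)V]^\ell R_{0,j}(\lambda\pm i0) + (-1)^N[R_{0,j}(\lambda\pm i0)V]^N R_{V,j}(\lambda\pm i0).
\]
Combining this with the symmetrized identity
\[
\Imag R_{V,j}(\lambda+i0) = (I+R_{0,j}(\lambda+i0)V)^{-1}\,\Imag R_{0,j}(\lambda+i0)\,(I+VR_{0,j}(\lambda-i0))^{-1}
\]
from the proof of \propref{prop:RV_imaginarypart}, and expanding each Neumann inverse to order $N$, represents $\Imag R_{V,j}$ as a finite linear combination of compositions in which exactly one factor is an $\Imag R_{0,j}$ and the rest are boundary values of $R_{0,j}$, plus a remainder containing $R_{V,j}$ sandwiched between $N$ copies of $R_{0,j} V$ on each side.

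For each term in the finite sum, I would leave the two outermost kernels (at the endpoints $r,s$) as Schwartz kernels and control them pointwise via the weighted $L^{q,\sigma}$ estimates of \propref{Lq_imaginary} and \propref{Lq_nonimaginary}. Every intermediate factor $V R_{0,j}$ would be handled by splitting $|V|\le \|\rho^{2\sigma}V\|_\infty\,\rho^{-\sigma}\cdot\rho^{-\sigma}$ and pairing one $\rho^{-\sigma}$ with each adjacent resolvent in $L^{2,\sigma}$ via Cauchy--Schwarz. The derivative $\partial_\lambda^k$ is distributed by the Leibniz rule across the $N+1$ resolvent factors, and \propref{Lq_imaginary}--\propref{Lq_nonimaginary} supply the required per-factor bounds. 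The hypothesis $\sigma > 4\lceil n/4\rceil - 2 + k = 2N - 2 + k$ is exactly what is needed to keep $\sigma > \frac{n}{2}+k$ in each Cauchy--Schwarz pairing. For $0<\lambda\le 1$, only the unique $\Imag R_{0,j}$ factor carries $\lambda$-dependence, contributing $\lambda^{n-2-k}$ (from \propref{Lq_imaginary} applied to the inner factor and Bessel asymptotics at the outer one), while the remaining $R_{0,j}V$ blocks are uniformly bounded in $\lambda$; this proves \eqref{RV_lowfreq_weighted}. For $\lambda\ge 1$, each of the interior $R_{0,j}$ factors contributes up to a $\lambda^{n-2}$ factor via the pointwise asymptotics of the Bessel/Hankel kernels near $r,s\sim 1/\lambda$, and combining these with the sharp $\lambda^{-1}$ from the outermost factor via \propref{L2_L2_prop} yields the growth $\lambda^{2\lceil n/4\rceil(n-2)-1}$ of \eqref{RV_highfreq_weighted}.

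For the remainder term, I would sandwich $\partial_\lambda^k R_{V,j}(\lambda\pm i0)$ between weighted $L^2$ spaces using $\|\partial_\lambda^k R_{V,j}\|_{L^{2,\eta}\to L^{2,-\eta}}\lesssim \langle\lambda\rangle^{-1}$ from \propref{prop:RV_imaginarypart}, choosing $\eta$ just above $\frac12+k$ (which is possible under the assumed lower bound on $\sigma$ provided $(N-1)$ units of weight are peeled off from each side). The outer $(R_{0,j}V)^N$ blocks on each side are then paired in $L^{2,\sigma}$ using \propref{Lq_nonimaginary} and the weight splitting from above. A direct count shows that this remainder is dominated at both frequency regimes by the main-term bounds already established.

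The main obstacle is combinatorial bookkeeping: distributing the total weight $\rho^{-2\sigma}$ supplied by each $V$ across the iterated factors so that every Cauchy--Schwarz pairing satisfies the hypotheses of \propref{Lq_imaginary}--\propref{Lq_nonimaginary}, while still leaving the endpoint weights $\rho^{-\alpha}(r),\rho^{-\alpha}(s)$ free to absorb the large-argument growth factors $(1+\lambda r)^{\ell-(n-1)/2}$ and $(1+\lambda s)^{m-(n-1)/2}$ appearing in \propref{Lq_nonimaginary} whenever more than $\frac{n-1}{2}$ of the $k$ derivatives land on an outer resolvent. The hypothesis $\alpha\ge\max\{k-\tfrac{n-1}{2},\,0\}$ is precisely what is required to turn those spatial factors into $O(1)$ uniformly in $r,s$, after which the stated $\lambda$-dependence follows from collecting the per-factor scalings.
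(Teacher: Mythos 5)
Your Birman--Schwinger expansion is internally inconsistent, and the inconsistency matters. Iterating $R_{V,j}=R_{0,j}-R_{0,j}VR_{V,j}$ a total of $N$ times, as you wrote, produces a \emph{one-sided} remainder $(-1)^N(R_{0,j}V)^N R_{V,j}$, with nothing to the right of $R_{V,j}$; yet you then describe (and estimate) a remainder with $R_{V,j}$ ``sandwiched between $N$ copies of $R_{0,j}V$ on each side.'' The one-sided form you actually wrote cannot be controlled: the kernel $[(R_{0,j}V)^N R_{V,j}](r,s)$ has no factor on the $s$ side to pair with the weight $\rho^{-\alpha}(s)$, and the $L^{2,\eta}\to L^{2,-\eta}$ bound on $R_{V,j}$ alone gives no uniform-in-$s$ pointwise control. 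The paper instead uses the two-sided identity
\[
R_{V,j} = \sum_{\ell=0}^{2M-1} R_{0,j}(-VR_{0,j})^\ell + (R_{0,j}V)^M R_{V,j}(VR_{0,j})^M,\qquad M=\left\lceil\tfrac{n}{4}\right\rceil,
\]
which is \emph{not} obtained by a single $2M$-fold iteration of one resolvent identity; it requires iterating $M$ times on each side (equivalently, expanding both $(I+R_0V)^{-1}$ and $(I+VR_0)^{-1}$ to order $M$). With exactly $M$ factors per side, the kernel $A(\cdot,s)$ in the $L^2$-pairing is a composition of $M$ factors, Young's inequality runs with $q=\tfrac{2M}{2M-1}\le \tfrac{n}{n-2}$, and the needed condition $\sigma>\tfrac{n(2M-1)}{2M}+k$ is implied by $\sigma>4M-2+k$.

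If you instead insist on $N=2M$ factors per side (as your remainder description states), two things break. First, you would need $q=\tfrac{2N}{2N-1}$, hence $\sigma>\tfrac{n(2N-1)}{2N}+k$; for $n=3,4,7$ this exceeds the hypothesis $\sigma>4\lceil n/4\rceil-2+k=2N-2+k$ (e.g. $n=3$: you need $\sigma>\tfrac{9}{4}+k$ but only have $\sigma>2+k$). Second, each $R_{0,j}$ factor contributes up to $\lambda^{n-2}$ at high frequency, so $2N$ factors would give $\lambda^{4M(n-2)-1}$, not the claimed $\lambda^{2M(n-2)-1}$. Your low-frequency bookkeeping also needs care: the non-imaginary $R_{0,j}$ blocks are uniformly bounded in $\lambda$ only when no $\lambda$-derivatives land on them --- each derivative placed on such a factor costs $\lambda^{-1}$ (Proposition~\ref{Lq_nonimaginary}) --- and the full $\lambda^{n-2-k}$ emerges only after a Leibniz bookkeeping across all factors, which is what the paper does term by term. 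Finally, expanding $(I+R_{0,j}V)^{-1}$ ``to order $N$'' in the symmetrized imaginary-part identity would generate cross terms with $(I+R_{0,j}V)^{-1}$ sandwiched in the middle of a composition; these have no explicit kernel and would need a separate argument, another complication the paper avoids by working directly from the two-sided Birman--Schwinger identity.
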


The proof proceeds similarly to \cite[\S6]{borthwick2015dispersive}, which utilizes the following modified version of Young's inequality.
\begin{lemma}\label{young}
Suppose that on a measure space $(Y,\mu)$ the integral kernels $K_j(z,w)$, $j=1,2,$ satisfy
\[\|K_1(z,\cdot)\|_{L^{q_1}} \le A,\hskip 0.2in \|K_1(\cdot,w)\|_{L^{q_1}}\le A, \hskip 0.2in \|K_2(\cdot,w')\|_{L^{q_2}} \le B\]
uniformly in $z,w,w'$ for $q_1,q_2\in [1,\infty]$. Then if $\frac{1}{q_1} + \frac{1}{q_2} = \frac{1}{p} + 1$, we have that
\[\left\|\int K_1(\cdot,w)K_2(w,w')\,d\mu(w)\right\|_{L^p} \le AB\]
uniformly in $w'.$ The bound on $\|K_1(\cdot,w)\|_{L^{q_1}}$ is not required if $p = \infty.$
\end{lemma}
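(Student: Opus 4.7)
The plan is to prove this as a kernel-operator version of Young's inequality via a three-factor Hölder argument, which is the standard strategy behind Young's convolution inequality in the translation-invariant case. Set $F(z,w') = \int K_1(z,w) K_2(w,w')\,d\mu(w)$. The exponent condition $\frac{1}{q_1}+\frac{1}{q_2}=\frac{1}{p}+1$ is equivalent to $\frac{1}{p}+\frac{1}{r}+\frac{1}{s}=1$ with $\frac{1}{r}:=\frac{1}{q_1}-\frac{1}{p}$ and $\frac{1}{s}:=\frac{1}{q_2}-\frac{1}{p}$, both nonnegative under the hypotheses, so three-factor Hölder is available.

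First I would factor the integrand as
\[
K_1(z,w) K_2(w,w') = \bigl[|K_1(z,w)|^{q_1}|K_2(w,w')|^{q_2}\bigr]^{1/p}\cdot |K_1(z,w)|^{1-q_1/p}\cdot |K_2(w,w')|^{1-q_2/p},
\]
with the two latter exponents chosen precisely so that the three factors sit in $L^p_w$, $L^r_w$, $L^s_w$ respectively. Applying Hölder in $w$ and using $(1-q_1/p)\cdot r=q_1$ and $(1-q_2/p)\cdot s=q_2$, and the assumed uniform bounds on $\|K_1(z,\cdot)\|_{L^{q_1}}$ and $\|K_2(\cdot,w')\|_{L^{q_2}}$, yields
\[
|F(z,w')|\le \Bigl(\int |K_1(z,w)|^{q_1}|K_2(w,w')|^{q_2}\,d\mu(w)\Bigr)^{1/p} A^{q_1/r} B^{q_2/s}.
\]

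Next I would raise both sides to the $p$-th power, integrate in $z$, and apply Fubini to do the $z$-integration first. The remaining $z$-integral is $\int |K_1(z,w)|^{q_1}\,d\mu(z)=\|K_1(\cdot,w)\|_{L^{q_1}}^{q_1}\le A^{q_1}$, which is the step where the second hypothesis $\|K_1(\cdot,w)\|_{L^{q_1}}\le A$ is used uniformly in $w$. The remaining $w$-integral of $|K_2(w,w')|^{q_2}$ contributes $B^{q_2}$. Collecting powers and using the identities $q_1(1+p/r)=p$ and $q_2(1+p/s)=p$ (which are direct consequences of the definitions of $r,s$), one finds $\|F(\cdot,w')\|_{L^p}^p\le A^pB^p$, which is the claim.

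The endpoint $p=\infty$ is handled separately and more cheaply: the exponent condition reduces to $\frac{1}{q_1}+\frac{1}{q_2}=1$, so a single application of Hölder in $w$ gives $|F(z,w')|\le\|K_1(z,\cdot)\|_{L^{q_1}}\|K_2(\cdot,w')\|_{L^{q_2}}\le AB$ pointwise, and no bound on $\|K_1(\cdot,w)\|_{L^{q_1}}$ is invoked—consistent with the remark in the statement. The main nuisance, rather than a genuine obstacle, is bookkeeping of the edge cases where one of $r,s$ is infinite (i.e.\ $q_1=p$ or $q_2=p$) or one of $q_1,q_2$ is infinite; in each such case the corresponding factor in the Hölder split becomes an $L^\infty$ norm and the argument goes through with the standard conventions $1/\infty=0$ and essential supremum in place of $L^\infty$ integrals.
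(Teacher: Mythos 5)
Your proof is correct. Since the paper does not actually prove Lemma~\ref{young} --- it is stated as a ``modified version of Young's inequality'' and attributed to the approach in Borthwick--Marzuola --- there is no paper proof to compare against line by line, but your three-factor H\"older decomposition is the standard textbook argument behind Young's inequality for integral operators, and it is the argument the paper is implicitly invoking. The exponent bookkeeping checks out: $\frac{1}{r}=\frac{1}{q_1}-\frac{1}{p}=1-\frac{1}{q_2}\ge 0$ and symmetrically for $s$, so $p,r,s$ are genuine H\"older conjugates; the identities $(1-q_1/p)r=q_1$ and $(1-q_2/p)s=q_2$ make the middle and last factors land in $L^{q_1}_w$ and $L^{q_2}_w$; and the Fubini--Tonelli step (applicable since you work with $|K_1|,|K_2|$) correctly brings in the second hypothesis $\|K_1(\cdot,w)\|_{L^{q_1}}\le A$ only when $p<\infty$. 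Your treatment of the $p=\infty$ endpoint and the degenerate $r=\infty$ or $s=\infty$ cases is also fine.
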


\noindent With this lemma in hand, we proceed to the proof of \propref{RV_pointwise}. 
\begin{proof}[Proof of \propref{RV_pointwise}]
We begin by expanding $R_{V,j}$ in a Birman-Schwinger series at all frequencies, as in \cite{goldberg2004dispersive}, which gives
\begin{equation}\label{Birman-Schwinger}
R_{V,j}(\tau) = \sum\limits_{\ell = 0}^{2M - 1}R_{0,j}(\tau)(-VR_{0,j}(\tau))^\ell + [R_{0,j}(\tau)V]^M R_{V,j}(\tau)[V R_{0,j}(\tau)]^M.
\end{equation}

\noindent As previously discussed, it suffices to consider only the case where we choose $\lambda^2+i0$ with $\lambda > 0.$ For simplicity, we write $R_{0,j}$ for $R_{0,j}(\lambda^2+i0)$ and $R_{V,j}$ for $R_{V,j}(\lambda^2+i0)$. We first consider the remainder term $[R_{0,j}V]^MR_{V,j}[V R_{0,j}]^M$. Since $V\in \rho^{-2\sigma}L^\infty$, we may write $V(r) = \rho^{-2\sigma}(r)f(r)$ for some $f\in L^\infty(\R^+).$ Also, note that for any two operators with Schwartz kernels $A(r,s),\,B(r,s)$, the kernel of their composition is given by 
\[\la A(r,\cdot),\overline{B(\cdot,s)}\ra_{L^2(\R^+)} = \langle B(\cdot,s),\overline{A(r,\cdot)}\rangle_{L^2(\R^+)},\] provided the composition makes sense. Therefore, we can write
\begin{align}\label{remainder_pairing}
\begin{split}
\rho^{-\alpha}[R_{0,j}V]^M R_{V,j}[V R_{0,j}]^M\rho^{-\alpha}(r,s) = \left\langle (\rho^{-\sigma}R_{V,j}\rho^{-\sigma})A(\cdot,s),A^*(r,\cdot)\right\rangle_{L^2},
\end{split}
\end{align}
where 
\[A(r,s) = (\rho^{-\sigma}fR_{0,j}\rho^{-\sigma})^{M-1}(\rho^{-\sigma}f R_{0,j}\rho^{-\alpha})(r,s),\]
and $A^*$ denotes the adjoint with respect to the $L^2$ pairing. However, we know that $A^*(r,s) = \overline{A(s,r)}$, and hence we can express the right-hand side of \eqref{remainder_pairing} as 
\[\la (\rho^{-\sigma}R_{V,j}\rho^{-\sigma})A(\cdot,s),\overline{A(\cdot,r)}\ra_{L^2}.\]
By \eqref{eqn:RV_L2bd}, we have that 
\begin{equation}\label{pairing_est}
\left|\partial_\lambda^k\la(\rho^{-\sigma}R_{V,j}\rho^{-\sigma})A(\cdot,s), \overline{A(\cdot,r)}\ra_{L^2}\right| \le \frac{C}{\la\lambda\ra}\max\limits_{k_1+k_2\le k}\lp\ltwo{\partial_\lambda^{k_1}A(\cdot,s)}\ltwo{\partial_\lambda^{k_2}A(\cdot,r)}\rp.
\end{equation}
To estimate the norms on the right, we wish to iteratively apply \lemref{young} to each factor in the definition of $A$. For this we consider the high and low frequency cases separately. First, suppose $\lambda \ge 1$. By \propref{Lq_nonimaginary}, we have that for $1\le q\le \frac{n}{n-2}$ and any $0 \le \wt k\le k_1,$
\begin{equation}\label{Lq_est}
\lpn{\rho^{-\sigma}(r)\partial_\lambda^{\wt k}R_{0,j}\rho^{-\sigma}(r,\cdot)}{q} \le  C\rho^{-\sigma}(r) \lambda ^{n-2-\wt k}\sum\limits_{\ell+m = \wt k}\left[(1 + \lambda r)^{\ell - \frac{n-1}{2}}\lp C_1 + C_2\lambda^{m-\frac{n-1}{2}}\rp\right].
\end{equation}
Note that if $\ell\le \frac{n-1}{2}$, we can see that the corresponding term in \eqref{Lq_est} is bounded by a constant times 
\[ \lambda ^{n-2-\wt k}\max\{1,\lambda^{(\wt k-\ell) - \frac{n-1}{2}}\} \le \max\{\lambda^{n-2-\wt k},\lambda^{\frac{n-3}{2}-\ell}\}\]
 uniformly for $r\in[0,\infty)$. On the other hand, if $\ell > \frac{n-1}{2}$ then we have that
\begin{equation}\label{nonuniform}
\rho^{-\sigma}(r)(1 + \lambda r)^{\ell-\frac{n-1}{2}}\lambda^{n-2-\wt k}\le (1 + \lambda)^{\ell - \frac{n-1}{2}}(1 + r)^{\ell - \frac{n-1}{2}-\sigma}\lambda^{n-2-\wt k} \le C(1 + r)^{\wt k-\sigma-\frac{n-1}{2}}\lambda^{\frac{n-3}{2} -(\wt k-\ell)}.
\end{equation}
by Cauchy-Schwarz. Recalling our conditions on $\sigma$, we see that $\wt k - \sigma - \frac{n-1}{2} < 0$. Therefore, the corresponding term in \eqref{Lq_est} is bounded by a constant times
\[\lambda^{\frac{n-3}{2}-(\wt k-\ell)}\max\{1,\lambda^{(\wt k-\ell) - \frac{n-1}{2}}\} = \max\{\lambda^{\frac{n-3}{2} - (\wt k - \ell)},\lambda^{-1}\}\]
uniformly in $r$. Maximizing over the possible combinations of $\ell,m$ with $\ell + m = \wt k$, we have that 
\begin{equation}\label{right_var}
\lpn{\rho^{-\sigma}\partial_\lambda^{\wt k}R_{0,j}\rho^{-\sigma}(r,\cdot)}{q} \le C\max\{\lambda^{n-2-\wt k},\lambda^{\frac{n-3}{2}}\}
\end{equation}
for some $C > 0$, uniformly in $r$. A similar argument gives
\begin{equation}\label{left_var}
\lpn{\rho^{-\sigma}\partial_\lambda^{\wt k}R_{0,j}\rho^{-\sigma}(\cdot,s)}{q}\le C\max\{\lambda^{n-2-\wt k},\lambda^{\frac{n-3}{2}}\}
\end{equation}
uniformly in $s.$

For the final factor in the definition of $A$, which has asymmetric weights, we only need an estimate in the left variable in order to apply \lemref{young}. By \propref{Lq_nonimaginary} we have, for $1\le q\le \frac{n}{n-2},$
\begin{align*}
  &  \lpn{\rho^{-\sigma}g\partial_\lambda^{\wt k}R_{0,j}\rho^{-\alpha}(\cdot,s)}{q} \\
  & \hspace{1.5cm} \le C\rho^{-\alpha}(s) \lambda ^{n-2-\wt k}\sum\limits_{\ell+m = \wt k}\left[(1 + \lambda s)^{\ell - \frac{n-1}{2}}\lp C_1 + C_2\lambda^{m-\frac{n-1}{2}}\rp\right].
\end{align*}
We may repeat our previous argument almost exactly in order to bound this quantity. The only difference here is that the analogue of \eqref{nonuniform} has a factor of $\rho^{-\alpha}$ instead of $\rho^{-\sigma}.$ So in order to obtain an estimate which is uniform in $s$, we must enforce the condition that $\alpha \ge \max\{k - \frac{n-1}{2},0\}$ and recall that $\wt k \le k.$ Aside from this, the rest of the argument is identical, and so we have
\begin{equation}\label{R0_uniform}
\lpn{\rho^{-\sigma}f\partial_\lambda^{\wt k}R_{0,j}\rho^{-\alpha}(\cdot,s)}{q}\le C\max\{\lambda^{n-2-\wt k},\lambda^{\frac{n-3}{2}}\}
\end{equation}
uniformly in $s$, provided that $\alpha \ge \max\{k - \frac{n-1}{2},0\}.$

 We can now iteratively apply \lemref{young} to $\|\partial_\lambda^{k_1}A(\cdot,s)\|_{L^2}$. To do this, we must choose $q = \frac{2M}{2M-1}$ so that $\frac{M}{q} = \frac{1}{2}+(M-1)$. We also require $1\le q \le \frac{n}{n-2}$, which is equivalent to taking $M \ge \frac{n}{4}$. This then implies that we must take $\sigma >  \frac{n(2M-1)}{2M}+k_1$ in order for \propref{Lq_imaginary} and \propref{Lq_nonimaginary} to apply. In particular, we can take $M = \left\lceil\frac{n}{4}\right\rceil$, the smallest integer larger than $\frac{n}{4}$. Using \eqref{RV_sigmacondition}, we see that 
\[\sigma > 4\left\lceil\frac{n}{4}\right\rceil - 2 + k = n\lp\frac{4M-2}{n}\rp + k \ge n\lp\frac{4M-2}{4M}\rp+k \ge n\lp\frac{2M-1}{2M}\rp+k_1,\]
and so the following argument holds under this condition on $\sigma.$ Repeatedly applying \lemref{young} to $\ltwo{\partial_\lambda^{k_1} A(\cdot,s)}$ and using that $f$ is uniformly bounded, we obtain
\[\ltwo{\partial_\lambda^{k_1}A(\cdot,s)} \le C\lambda^{M(n-2)}. \]
The analogous estimate for $\|\partial_\lambda^{k_2}A(\cdot, r)\|$ combined with \eqref{pairing_est} gives
\begin{equation}\label{remainder_highfreq}
\left|\partial_\lambda^k\la(\rho^{-\sigma}R_{V,j}\rho^{-\sigma})A(\cdot,s), \overline{A(\cdot,r)}\ra_{L^2} \right| \le C\lambda^{2M(n-2)-1}
\end{equation}
for $\lambda \ge 1$, and this estimate holds uniformly in $r$ and $s.$

Next, we consider the remainder term in \eqref{Birman-Schwinger} when $0 < \lambda\le 1.$ In this case, taking the imaginary part in the left-hand side of \eqref{pairing_est} is essential, so we must estimate
\begin{equation}
\partial_\lambda^k\Imag\langle \rho^{-\sigma} R_{V,j}\rho^{-\sigma}A(\cdot,s),A(\cdot,r)\rangle_{L^2}.
\end{equation}
First, we note that the above can be written as a finite linear combination of terms where the imaginary part falls on either $R_{V,j}$ or at least one of the factors of $A$. Thus, by \eqref{eqn:RV_L2bd} and \eqref{RV_imaginarypart}, we can write
\begin{align}\label{pairing_imag}
\begin{split}
\left|\partial_\lambda^k\Imag\langle \rho^{-\sigma} R_{V,j}\rho^{-\sigma}A(\cdot,s),A(\cdot,r)\rangle_{L^2} \right|&\le C\max\limits_{k_1+k_2 + k_3\le k}\lambda^{n-2-k_3}\|\partial_\lambda^{k_1}A(\cdot,s)\|_{L^2}\|\partial_\lambda^{k_2}A(\cdot,r)\|_{L^2}\\
& + C\max\limits_{k_1+k_2\le k}\|\partial_\lambda^{k_1}\Imag A(\cdot,s)\|_{L^2}\|\partial_\lambda^{k_2}A(\cdot,r)\|_{L^2}
\end{split}
\end{align}
for $0 < \lambda \le 1.$ To estimate the first term on the right-hand side of \eqref{pairing_est}, we can argue analogously to the $\lambda \ge 1$ case, but now we use the low-frequency estimates from \propref{Lq_nonimaginary}, which give
\[\|\rho^{-\sigma}\partial_\lambda^{\wt k}R_{0,j}\rho^{-\sigma}\|_{L^q}\le C_1\lambda^{-\wt k}\rho^{-\sigma}(r) + C_2\lambda^{n-2-\wt k}(1 + \lambda r)^{\wt k - \frac{n-1}{2}}\rho^{-\sigma}(r) \le C\lambda^{-\wt k}\]
for any $\wt k \le k$ and $1\le q\le \frac{n}{n-2}$ as before. Similarly, we have
\begin{equation}\label{R0_nonim_term}
\|\rho^{-\sigma}\partial_\lambda^{\wt k}R_{0,j}\rho^{-\alpha}\|_{L^q} \le C\lambda^{-\wt k},
\end{equation}
for $\alpha \ge \max\{k-\frac{n-1}{2},0\}.$ Therefore, using \lemref{young}, we have that 
\[\|\partial_\lambda^{\wt k}A(\cdot,s)\|_{L^2} \le C\lambda^{-\wt k} \]
uniformly in $s$, for $0 < \lambda \le 1.$ Therefore, we have
\begin{equation}\label{pairing_1}
\max\limits_{k_1+k_2+k_3\le k}\lambda^{n-2-k_3}\|\partial_\lambda^{k_1}A(\cdot,s)\|_{L^2}\|\partial_\lambda^{k_2}A(\cdot,r)\|_{L^2} \le C\lambda^{n-2-k}.
\end{equation}
Now, to handle the second term on the right-hand side of \eqref{pairing_imag}, we note that one may expand $\Imag A(\cdot,s)$ into a linear combination of terms in which the imaginary part falls on at least one factor of $R_{0,j}.$ Therefore, we can use \propref{Lq_imaginary} to obtain that 
\begin{equation}\label{R0_im_term}
\|\rho^{-\sigma}\partial_\lambda^{\wt k}\Imag R_{0,j}\rho^{-\sigma}\|_{L^q} \le C\lambda^{n-2}(1 + \lambda r)^{k-\frac{n-1}{2}}\rho^{-\sigma} \le C\lambda^{n-2}
\end{equation}
for any $\wt k\le k.$ Thus, applying \lemref{young} in combination with \eqref{R0_nonim_term} and \eqref{R0_im_term} gives 
\[\|\partial_\lambda^{\wt k}\Imag A(\cdot,s)\|\le C\lambda^{n-2-\wt k}\]
for any $\wt k \le k.$ Hence, we have
\begin{equation}\label{pairing_2}
\max\limits_{k_1+k_2\le k}\|\partial_\lambda^{k_1}\Imag A(\cdot,s)\|_{L^2}\|\partial_\lambda^{k_2}A(\cdot,r)\|_{L^2} \le C\lambda^{n-2-k}
\end{equation}
uniformly in $r,s.$ Combining \eqref{pairing_1} and \eqref{pairing_2} with \eqref{pairing_imag} yields
\begin{equation}\label{remainder_lowfreq}
\partial_\lambda^k\Imag\langle \rho^{-\sigma} R_{V,j}\rho^{-\sigma}A(\cdot,s),A(\cdot,r)\rangle_{L^2} \le C\lambda^{n-2-k}, 
\end{equation}
and so the remainder in the Birman-Schwinger expansion of $R_{V,j}$ satisfies the claimed estimate for $0<\lambda\le 1.$

%

Now we consider a generic term in the sum in \eqref{Birman-Schwinger} for $1\le \ell \le 2M-1$. As before, we use the fact that $V = \rho^{-2\sigma}(r)f(r)$ for some $f\in L^\infty(\R^+)$ to write
\begin{align}
\begin{split}
\rho^{-\alpha}\partial_\lambda^k R_{0,j}(VR_{0,j})^\ell\rho^{-\alpha}(r,s)  = \partial_\lambda^k (\rho^{-\alpha}R_{0,j}\rho^{-\sigma})(\rho^{-\sigma}fR_{0,j}\rho^{-\sigma})^{\ell - 1}(\rho^{-\sigma}fR_{0,j}\rho^{-\alpha})(r,s).
\end{split}\end{align}
Writing the above as an $L^2$-pairing, we have
\begin{equation}\label{L2_pairing}\rho^{-\alpha} \partial_\lambda^kR_{0,j}(VR_{0,j})^\ell\rho^{-\alpha}(r,s)  =\partial_\lambda^k \la\rho^{-\alpha}R_{0,j}\rho^{-\sigma}(r,\cdot),(\rho^{-\sigma}fR_{0,j}\rho^{-\sigma})^{\ell-1}(\rho^{-\sigma}fR_{0,j}\rho^{-\alpha})(\cdot,s)\ra_{L^2}.
\end{equation}
Upon taking the imaginary part, we obtain a finite linear combination of terms of the form \eqref{L2_pairing} where at least one factor of $R_{0,j}$ has the imaginary part acting on it. We assume without loss of generality that the leftmost factor on the right-hand side of \eqref{L2_pairing} has the imaginary part, and thus we can apply H\"older's inequality to obtain
\begin{align}\label{holder}
\begin{split}
\left|\partial_\lambda^k \la\rho^{-\alpha}\Imag R_{0,j}\rho^{-\sigma}(r,\cdot),(\rho^{-\sigma}fR_{0,j}\rho^{-\sigma})^{\ell-1}(\rho^{-\sigma}fR_{0,j}\rho^{-\alpha})(\cdot,s)\ra_{L^2}\right|\\ 
& \hspace{-3.7in}\le C\|\rho^{-\alpha}\partial_\lambda^{k_1}\Imag R_{0,j}\rho^{-\sigma}(r,\cdot)\|_{L^{q'}}\\
& \hspace{-3.5in}\times\|(\rho^{-\sigma}f\partial_\lambda^{k_2}R_{0,j}\rho^{-\sigma})\dotsm(\rho^{-\sigma}f\partial_\lambda^{k_{\ell-1}}R_{0,j}\rho^{-\sigma})(\rho^{-\sigma}f\partial_\lambda^{k_\ell}R_{0,j}\rho^{-\alpha})(\cdot,s)\|_{L^p}
\end{split}
\end{align}
for some $1\le q' < \infty$ to be determined, and $p$ given by $\frac{1}{q'} + \frac{1}{p} = 1,$ where $k_1 + k_2 +\dotsm + k_\ell = k.$ 

If $0 < \lambda \le 1$, we recall that by \propref{Lq_imaginary},
\begin{equation}\label{imag_Lq}
\|\rho^{-\alpha}\partial_\lambda^{k_1}\Imag R_{0,j}\rho^{-\sigma}(r,\cdot)\|_{L^{q'}} \le C\lambda^{n-2}
\end{equation}
provided that $\sigma > \frac{n}{q'} + k$ and $\alpha \ge \max\{ k_1 - \frac{n-1}{2},0\}$. Similarly, for any $\wt k\le k$, we have by \propref{Lq_nonimaginary} that for any $1\le q\le \frac{n}{n-2}$,
\begin{equation}\label{nonimag_Lq}
\|\rho^{-\sigma}\partial_\lambda^{\wt k}R_{0,j}\rho^{-\sigma}(r,\cdot)\|_{L^q} \le C\lambda^{-\wt k},
\end{equation}
if $\sigma > \frac{n}{q}+k$, along with the analogous estimate when the norm is taken with respect to the other variable.
Using \eqref{imag_Lq}, \eqref{nonimag_Lq}, and repeated applications of \lemref{young} to the right-hand side of \eqref{holder}, we obtain
\begin{equation}\label{nonremainder_lowfreq}
 \left|\partial_\lambda^k \la\rho^{-\alpha}\Imag R_{0,j}\rho^{-\sigma}(r,\cdot),(\rho^{-\sigma}fR_{0,j}\rho^{-\sigma})^{\ell-1}(\rho^{-\sigma}fR_{0,j}\rho^{-\alpha})(\cdot,s)\ra_{L^2}\right|\le C\lambda^{n-2 - k}
\end{equation}
when $0 < \lambda \le 1,$ as long as we choose $q,q'$ such that $\frac{1}{q'} + \frac{\ell}{q} = \ell$ and provided that $\sigma >\max\{\frac{n}{q'},\frac{n}{q}\}+k$. Since $\frac{n}{q} \ge n-2-k$, we have by \eqref{RV_sigmacondition} that
\[\sigma > 4\left\lceil\frac{n}{4}\right\rceil - 2 + k \ge n-2+k,\]
and so we can ensure that $\sigma > \frac{n}{q} + k$ if $q$ is chosen sufficiently close to, but just below $\frac{n}{n-2}.$ Given this choice of $q,$ we also have that $q'$ lies just above $\frac{n}{2\ell}$, and so 
\[\sigma > 4\left\lceil\frac{n}{4}\right\rceil - 2+k = 2(2M-1) + k,\]
and since $1\le \ell \le 2M-1$, we can ensure that $\sigma > \frac{n}{q'} + k$, since $\frac{n}{q'}+k$ can be made arbitrarily close to $2\ell + k.$ Therefore, under the claimed conditions on $\sigma$ and $\alpha$, we have that 
\[\left| \partial_\lambda^kR_{0,j}(VR_{0,j})^\ell\right| \le C\lambda^{n-2-k}\]
for $0 < \lambda\le 1$ and any $1\le \ell \le 2M-1.$

In the case where $\lambda \ge 1$, we have that
\[\|\rho^{-\alpha}\partial_\lambda^{k_1}\Imag R_{0,j}\rho^{-\sigma}(r,\cdot)\|_{L^{q'}} \le C\max\{ \lambda ^{n - 2- k_1-\frac{n}{q'}},\lambda^{\frac{n-3}{2}}\}\]
uniformly in $r$ as before, provided that $\alpha \ge \max\{k-\frac{n-1}{2},0\}$ and $\sigma > \frac{n}{q'} + k$. Next, choose some $q$ which lies just below $\frac{n}{n-2}$ as above. Then, for any $\wt k \le k,$
\[\|\rho^{-\sigma}\partial_\lambda^{\wt k}R_{0,j}\rho^{-\sigma}(r,\cdot)\|_{L^q}\le C\max\{\lambda^{n-2 - \wt k},\lambda^{\frac{n-3}{2}}\}, \]
uniformly in $r$, provided that $\sigma > \frac{n}{q} + k$, along with the analogous estimate when the $L^q$ norm is taken over the second variable. We also note that for the rightmost factor in \eqref{holder}, we have
\[\|\rho^{-\sigma}\partial_\lambda^{\wt k}R_{0,j}\rho^{-\alpha}(\cdot,s)\|_{L^q}\le C\max\{\lambda^{n-2-\wt k},\lambda^{\frac{n-3}{2}}\},\]
uniformly in $s,$ provided $\alpha \ge \max\{k - \frac{n-1}{2},0\}.$ Given these estimates and \lemref{young}, we can maximize over the possible combinations of $k_1,\dotsc,k_\ell$ to see from \eqref{holder} that
\begin{equation}\label{nonremainder_highfreq}
\begin{split}
\left|\partial_\lambda^k \la\rho^{-\alpha}\Imag R_{0,j}\rho^{-\sigma}(r,\cdot),(\rho^{-\sigma}fR_{0,j}\rho^{-\sigma})^{\ell-1}(\rho^{-\sigma}fR_{0,j}\rho^{-\alpha})(\cdot,s)\ra_{L^2} \right|\\
& \hspace{-2in}\le \lambda^{\ell( n-2)-k}\max\{\lambda^{n-2-\frac{n}{q'}},\lambda^{\frac{n-3}{2}}\}
\end{split}
\end{equation}
provided that $\frac{1}{q'} + \frac{\ell}{q} = \ell$ and $\sigma >\max\{\frac{n}{q'},\frac{n}{q}\}+k$. As shown previously, this condition on $\sigma$ is satisfied under the hypothesis \eqref{RV_sigmacondition} if $q$ is chosen close enough to $\frac{n}{n-2}.$ Furthermore, for this choice of $q$, we have that $q'$ lies just above $\frac{n}{2\ell}$. We claim that this implies that the the bound \eqref{nonremainder_highfreq} is smaller than the estimate \eqref{remainder_highfreq}. To see this, note that if $q'$ is chosen sufficiently close to $\frac{n}{2\ell}$, then $\frac{n}{q'} = 2\ell + \ve$ for some $\ve > 0$. Then, we have
\[\ell(n-2) + n-2 - \frac{n}{q'} = \ell(n-2) + (n-2) - 2\ell -\ve \le \ell(n-4) + (n-2).\]
If $n\le 4$, then the above is smaller than $n-2$ for all $\ell = 1,\dotsc,2M-1$. If $n\ge 4,$ then we have
\[\ell(n-4)+(n-2) \le (2M-1)(n-4) +(n-2) = 2M(n-2) -2(2M-1) \le 2M(n-2) - 1.\]
Furthermore, we note that 
\[\ell(n-2) + \frac{n-3}{2} \le (2M-1)(n-2) - \frac{n-3}{2} \le 2M(n-2) - \frac{n-1}{2} \le 2M(n-2) - 1.\]
Therefore, the exponent on $\lambda$ in \eqref{nonremainder_highfreq} is smaller than that of \eqref{remainder_highfreq} for any $\ell,$ and hence we have
\begin{equation}\label{nonremainder_highfreq2}
\left|\partial_\lambda^k \la\rho^{-\alpha}\Imag R_{0,j}\rho^{-\sigma}(r,\cdot),(\rho^{-\sigma}fR_{0,j}\rho^{-\sigma})^{\ell-1}(\rho^{-\sigma}fR_{0,j}\rho^{-\alpha})(\cdot,s)\ra_{L^2} \right|\le \lambda^{2M(n-2)-1}
\end{equation}
when $\lambda \ge 1$.

Now, if the imaginary part falls on any factor other than the first on the right-hand side of \eqref{L2_pairing}, we simply repeat the preceding argument, but with the $L^{q'}$ norm on that factor.

Finally, we consider the case where $\ell = 0$ in \eqref{Birman-Schwinger}. For this term, we must simply obtain pointwise bounds on $\rho^{-\alpha}(r)\partial_\lambda^k\Imag R_{0,j}(r,s)\rho^{-\alpha}(s)$. Recall that by \lemref{stone}, we have
\[\Imag R_{0,j}(r,s) = \frac{\pi }{2} \lambda^{n-2}(\lambda r\lambda s)^{-\frac{n-2}{2}} J_{\nu_j}(\lambda r)J_{\nu_j}(\lambda s).\]
Therefore, $\partial_\lambda^k R_{0,j}(r,s)$ can be written as a finite linear combination of terms of the form 
\begin{equation}\label{R0_linearcomb}
\lambda^{n-2-k}(\lambda r)^{\ell-\frac{n-2}{2}}(\lambda s)^{m - \frac{n-2}{2}}J_{\nu_j+\alpha}(\lambda r)J_{\nu_j+\beta}(\lambda s)
\end{equation}
for $\ell + m = k$, $|\alpha|\le \ell$, and $|\beta| \le m$. Using the standard asymptotics of the Bessel functions, we have that the above is bounded in absolute value by a constant times 
\begin{equation}\label{R0_unweighted}
\lambda^{n-2-k}(1 + \lambda r)^{\ell - \frac{n-1}{2}}(1 + \lambda s)^{m - \frac{n-1}{2}}.
\end{equation}
Next, we note that 
\[\rho^{-\alpha}(r)(1 + \lambda r)^{\ell - \frac{n-1}{2}} \le C(1 + \lambda)^\ell,\]
for all $\lambda,$ uniformly in $r$, under the assumption that $\alpha \ge \max\{k - \frac{n-1}{2},0\}$. The analogous estimate holds for $\rho^{-\alpha}(s)(1 + \lambda s)^{m-\frac{n-1}{2}}$, and therefore, we have that 
\begin{equation}\label{R0_pointwisebound}
\left|\rho^{-\alpha}(r)\partial_\lambda^k R_{0,j}(r,s)\rho^{-\alpha}(s)\right| \le C\lambda^{n-2-k}(1 + \lambda)^k.
\end{equation}
Combining \eqref{R0_pointwisebound} with \eqref{remainder_lowfreq}, \eqref{remainder_highfreq}, \eqref{nonremainder_lowfreq}, and \eqref{nonremainder_highfreq}, the proof of \propref{RV_pointwise} is complete.
\end{proof}

\section{Dispersive estimates}
\label{dispersive}

In this section, we prove the main estimate in \thmref{disp_est}. To accomplish this, we write the spectral measure for $-\Delta_{C(X)} + V$  as 
\[d\Pi_V(\lambda;x,y) = \frac{1}{\pi i}[R_V(\lambda^2+i0;x,y) - R_V(\lambda^2 - i0;x,y)]\lambda\,d\lambda = \frac{1}{\pi}\Im R_V(\lambda^2+i0;x,y)\lambda\,d\lambda.\]
 Then, we can write
\[\left[e^{it(-\Delta_{C(X)} + V)}P_c\right](x,y) = \int\limits_0^\infty e^{it\lambda^2}d\Pi_V(\lambda;x,y)= \frac{1}{\pi }\int\limits_0^\infty e^{it\lambda^2}\Imag R_{V}(\lambda^2+i0;x,y) \lambda\,d\lambda,\]
where we recall that $P_c$ denotes projection onto the continuous spectrum of $-\Delta_{C(X)}+V.$ Projecting further onto the span of $\varphi_j$, we obtain
\begin{equation}
\left[e^{it(-\Delta_{C(X)} + V)}P_cE_j\right](r,s) = \frac{1}{\pi}\int\limits_0^\infty e^{it\lambda^2}\Im R_{V,j}(\lambda^2+i0;r,s)\lambda\,d\lambda
\end{equation}
since $V$ is radial. Therefore, the estimate in \thmref{disp_est} is equivalent to
\begin{equation}\label{spectral_measure_est}
\left|\frac{1}{\pi }\int\limits_0^\infty e^{it\lambda^2} \rho^{-\alpha}(r)\Im R_{V,j}(\lambda^2+i0;r,s)\rho^{-\alpha}(s)\lambda\,d\lambda\right| \le Ct^{-\frac{n}{2}}
\end{equation}
for $\alpha > 2\left\lceil\frac{n}{4}\right\rceil(n-2) - \frac{n-1}{2} + 2$.

\begin{proof}[Proof of \thmref{disp_est}]
Assume that $n$ is odd, and let $\chi\in C_0^\infty(\R)$ be a cutoff function which is identically one on $[-1/2,1/2]$ and zero outside $[-1,1]$. We then consider the low-frequency component of the left-hand side of \eqref{spectral_measure_est}, given by 
\begin{equation}\label{low_frequency}
 \frac{1}{\pi }\int\limits_0^\infty e^{it\lambda^2} \,\chi(\lambda)\rho^{-\alpha}(r)\Im R_{V,j}(\lambda^2+i0;r,s)\rho^{-\alpha}(s)\lambda\,d\lambda.
\end{equation}
 Noting that the operator $\frac{1}{2it\lambda}\partial_\lambda$ preserves $e^{it\lambda^2}$, we may integrate by parts $N = \frac{n-1}{2}$ times in $\lambda$ to obtain
\begin{equation}\label{IBP}
\frac{C_N}{t^N}\int\limits_0^\infty e^{it\lambda^2}\partial_\lambda\lp\frac{1}{\lambda}\partial_\lambda\rp^{N-1}\left[\chi(\lambda)\rho^{-\alpha}(r)\Im R_{V,j}(\lambda^2+i0;r,s)\rho^{-\alpha}(s)\right]\,d\lambda
\end{equation}
for some $C_N\in\C\setminus 0.$ That no boundary terms appear at $\lambda = 0$ follows from the fact that all derivatives of $\chi(\lambda)$ vanish identically near the origin and that $\rho^{-\alpha}(r)\partial_\lambda^k\Im R_{V,j}(\lambda^2+i0)\rho^{-\alpha}(s)$ vanishes to order $n-2-k$. To be more precise, all boundary terms at $\lambda = 0$ must involve factors of the form
\[\lp\frac{1}{\lambda}\partial_\lambda\rp^k\left[\chi(\lambda)\rho^{-\alpha}(r)\Imag R_{V,j}(\lambda^2+i0)\rho^{-\alpha}(s)\right]\] 
for some $0 \le k \le N-1 = \frac{n-3}{2}$. If any derivatives fall on $\chi$, then the corresponding term obviously vanishes at $\lambda = 0$. If instead, all derivatives fall on $\Im R_{V,j}$, then by \propref{RV_pointwise}, the corresponding term is bounded by a constant times $\lambda^{n-2-2k}\le \lambda^{n-2-(n-3)} = \lambda$, which vanishes at $\lambda = 0.$ Therefore, all boundary terms are necessarily zero.

 Now, observe that when expanding the integrand in \eqref{IBP} via the product rule, any terms in which a derivative falls on the factor of $\chi(\lambda)$ can be written as 
\[C_Nt^{-N}\int\limits_{0}^\infty e^{it\lambda^2}G(\lambda;r,s)\,d\lambda\]
for some $G(\lambda;r;s)$ which is smooth and compactly supported away from $0$ in $\lambda,$ and bounded uniformly in $r,s$ by \propref{RV_pointwise}. Applying the standard dispersive estimate for the Schr\"odinger equation on $\R,$ we have
\begin{equation}\label{G_terms}
t^{-N}\left|\int\limits_{0}^\infty e^{it\lambda^2}G(\lambda;r,s)\,d\lambda\right|\le Ct^{-N-\tfrac{1}{2}}\|\widehat G(\cdot\,;,r,s)\|_{L^1},
\end{equation}
where $\widehat G$ denotes the Fourier transform in $\lambda$ (extend $G$ by zero to a function on $\R$ to compute this Fourier transform). Since $n$ is odd, we may choose $N = \frac{n-1}{2}$, so the right-hand side of \eqref{G_terms} is bounded by $C t^{-\frac{n}{2}}$ as claimed, after possibly increasing $C$.

Now, any terms obtained from expanding \eqref{IBP} where no derivatives fall on the factor of $\chi$ must be of the form 
\begin{equation}\label{lowfreq_linearcomb}
\lambda^{1-2N + k}\chi(\lambda)\rho^{-\alpha}(r)\partial_\lambda^k\Im R_{V,j}(\lambda^2+i0;r,s)\rho^{-\alpha}(s)
\end{equation}
for some $k = 1,2,\dotsc,N$ since at least one derivative always falls on the factor of $\Im R_{V,j}.$ By \propref{RV_pointwise}, we have that each of the above terms is bounded in absolute value by a constant times 
\[\lambda^{1-2N + k}\chi(\lambda)\lambda^{n-2-k} = \lambda^{n-1 - 2N}\chi(\lambda)\]
uniformly for $r,s >0.$ For our choice of $N = \frac{n-1}{2}$, we have that $\lambda^{n-1 - 2N} = 1$, and hence \eqref{lowfreq_linearcomb} is a smooth function of $\lambda$, and so its Fourier transform is bounded in $L^1$. Once again, using the standard $L^1\to L^\infty$ dispersive estimate for the free one-dimensional Schr\"odinger equation, we have that 
\begin{align}\label{low_frequency_bound}
\begin{split}
t^{-N}\left|\int\limits_{0}^\infty e^{it\lambda^2}\lambda^{1-2N + k}\chi(\lambda)\rho^{-\alpha}(r)\partial_\lambda^k\Im R_{V,j}(\lambda^2+i0)(r,s)\rho^{-\alpha}(s)\,d\lambda\right| \le C t^{-N-\frac{1}{2}} = Ct^{-\frac{n}{2}},
\end{split}
\end{align}
uniformly in $r,s.$ We remark that it is in this calculation that the choice of $N = \frac{n-1}{2}$, and hence the power of $t^{-\frac{n}{2}}$, cannot be improved, since any additional derivatives which fall on $\Im R_{V,j}(\lambda^2+i0)$ would yield an integrand which is not bounded smooth near $\lambda = 0$. We also note that for this portion of the argument, we only require that $\alpha \ge 0$, since we did not differentiate $\Im R_{V,j}$ more than $\frac{n-1}{2}$ times.

Next, we consider the ``high-frequency" component of \eqref{spectral_measure_est}, which we define by 
\begin{equation}\label{high_frequency}
\frac{1}{\pi}\int\limits_{0}^\infty e^{it\lambda^2}\chi(\lambda/R)(1 - \chi(\lambda)) \rho^{-\alpha}(r)\Im R_{V,j}(\lambda^2+i0;r,s)\rho^{-\alpha}(s)\,\lambda\,d\lambda
\end{equation}
for any $R\in[1,\infty)$. To control this term, we integrate by parts as before to obtain
\begin{align*}
&\frac{1}{\pi}\int\limits_{0}^\infty e^{it\lambda^2}\chi(\lambda/R)(1 - \chi(\lambda)) \rho^{-\alpha}(r)\Im R_{V,j}(\lambda^2+i0;r,s)\rho^{-\alpha}(s)\,\lambda\,d\lambda\\
& \hskip .05in = C_N t^{-N}\int\limits_{0}^\infty e^{it\lambda^2}\partial_\lambda\lp\frac{1}{\lambda}\partial_\lambda\rp^{N-1}\left[\chi(\lambda/R)(1 - \chi(\lambda))\rho^{-\alpha}(r)\Im R_{V,j}(\lambda^2+i0;r,s)\rho^{-\alpha}(s)\right]\,d\lambda
\end{align*}
for any $N > 0$ and some corresponding constant $C_N$. We aim to show that the integrand can be bounded uniformly in $L^1(\R,d\lambda)$ as $R\to\infty.$ We also claim that it is sufficient to consider the case where all the derivatives in $\lambda$ fall on the factor of $\Imag R_{V,j}$. To see this, note that $\partial_\lambda(1-\chi(\lambda))$ is supported in a fixed compact set which is bounded away from $\lambda = 0,$ and that $\partial_\lambda\chi(\lambda/R) = \frac{1}{R}\chi'(\lambda/R)$ is supported away from $\lambda = 0$ in a set of size $\mathcal O(R).$ Therefore, we need only show that 
\begin{equation}\label{high_freq_integrand}\frac{1}{\lambda^{N-1}}\chi(\lambda/R)(1-\chi(\lambda))\rho^{-\alpha}(r)\partial_\lambda^N\Imag R_{V,j}(\lambda^2+i0;r,s)\rho^{-\alpha}(s)
\end{equation}
has bounded $L^1$ norm, and that the estimate is uniform with respect to $r,\,s,$ and $\,R$. For this, we utilize \propref{RV_pointwise}, which implies that if $\alpha \ge \max\{N -\frac{n-1}{2},0\}$ and $\sigma > 4\left\lceil\frac{n}{4}\right\rceil -2 + N$, then \eqref{high_freq_integrand} is bounded by a constant times $\langle\lambda\rangle^{1-N+L},$ uniformly in $r,\,s,$ and $R$, where $L = 2\left\lceil\frac{n}{4}\right\rceil(n-2)-1.$ Thus, choosing
\[N = 2\left\lceil\frac{n}{4}\right\rceil(n-2)+2,\]
guarantees that \eqref{high_freq_integrand} is uniformly bounded in $L^1$. Noting that $N\ge \frac{n}{2}$ if $N$ is chosen as above, we obtain
\begin{equation}\label{high_frequency_bound}
\lim\limits_{R\to\infty}\left|\frac{1}{\pi}\int\limits_{0}^\infty e^{it\lambda^2}\chi(\lambda/R)(1 - \chi(\lambda)) \Im R_{V,j}(\lambda^2+i0;r,s)\,\lambda\,d\lambda\right|\le Ct^{-\frac{n}{2}},
\end{equation} 
where $C > 0$ is independent of $r,\,s$ and $R.$ Our choice of $N$ also determines the maximum number of derivatives of $\Im R_{V,j}(\lambda^2+i0;r,s)$ that must be taken, which yields 
\[\sigma > 4\left\lceil\frac{n}{4}\right\rceil -2 + 2\left\lceil\frac{n}{4}\right\rceil(n-2)+2 = 2n\left\lceil\frac{n}{4}\right\rceil \]
as the sufficient condition on the decay rate of $V$. Also, the condition on $\alpha$ becomes 
\[\alpha > N - \frac{n-1}{2} = 2\left\lceil\frac{n}{4}\right\rceil(n-2) +2 - \frac{n-1}{2},\]
as stated in \thmref{disp_est}. Under these conditions on the weights, we can combine \eqref{low_frequency_bound} and \eqref{high_frequency_bound} to obtain \eqref{spectral_measure_est}, which completes the proof of \thmref{disp_est}.
\end{proof}

\begin{rmk}\textnormal{
In the case where $n$ is even, we find that in repeating the argument just prior to \eqref{low_frequency_bound}, the largest $N$ we can choose is $\frac{n-2}{2}$, which leads to a decay rate of $t^{-\frac{n-1}{2}}$ in the $L^1\to L^\infty$ estimate. The remainder of the argument goes through without modification, yielding \eqref{disp_est_even}.
}
\end{rmk}

\appendix


\section{Absence of embedded resonances on product cones}
\label{app:embres}

\subsection{Radial potentials and ODE methods}

In this appendix, we establish the absence of embedded resonances and eigenvalues result claimed in \thmref{absence.thm} using the results from Theorem XIII.56 of \cite{RSv4}.  To prove this, we need to use the fact that $V$ is decaying to treat $Vu$ as a perturbative term in the limit and prove that if $u \in L^2$, then hence $Vu$ is perturbative and we can write
\[
u_\pm (r)  = \frac{e^{ \pm i \lambda r}}{r^{\frac{n-1}{2}}} \mp \int_r^\infty   \frac{\sin (\lambda (r-s)) }{ \lambda (rs)^{\frac{n-1}{2}}} V (s) u_\pm (s) ds,
\]
where by $u_\pm$ we mean the outgoing/incoming functions converging to the Jost solution asymptotic of the form
\[
 \frac{e^{ \pm i \lambda r}}{r^{\frac{n-1}{2}}}.
 \]
This gives the exact integrability condition in Theorem XIII.56 of \cite{RSv4}.  It is also an integral equation that can also be solved using Picard iteration.  Using the variation of parameters formula to solve
\[
(-\Delta_{C(X)} -\lambda^2) u = V(r) u 
\]
for any $V$ with $\int_a^\infty V(r) dr < \infty$ for some $a > 0$, we can write
\begin{align*}
 & u(r) = c_+ u_+ (r) + c_- u_- (r)  \\
 & \hspace{1cm} + \int_r^\infty \frac{u_+ (r) u_- (s) }{W(s)}  V(s) u(s) s^{n-1} ds + \int_r^\infty \frac{u_- (r) u_+ (s) }{W(s)}  V(s) u(s) s^{n-1} ds,   
\end{align*}
where $W(s) = 2 s^{n-1}$.  Hence, if we have a resonance $u \in L^{2,\sigma}$ for $\sigma > \frac12$, we have that indeed we can see the integral terms on the right converge and hence derive a contradiction to the existence of resonances that are not eigenvalues.  

For radial potentials, we have observed that there are no embedded eigenvalues using the ODE based tools of Theorem XIII.56 of \cite{RSv4}, but the issue of absence of embedded resonances down to $\lambda = 0$ still must be established.  We state the result here.   

\begin{proposition}\label{res.crit.lemma}
For $V \in \rho^{-2 \sigma}L^\infty(\R^+)$ with $ \sigma > \frac12$, if $R_{V,j} (z^2)$ has a pole at 
$\tau$ for $\tau \in \RR\backslash\{0\}$, 
then $ \tau^2$ is an embedded eigenvalue for $-\Delta_{C(X)} + V$.
\end{proposition}

\begin{proof}[Proof of Theorem \ref{absence.thm}]

By \propref{res.crit.lemma}, it suffices to prove that $R_V(z^2)$ has no real poles corresponding to eigenvalues of $-\Delta_{C(X)}+V$ embedded in the continuous spectrum. It also suffices to prove this fact for $R_{V,j}(z^2)$, since the fact that $V$ is radial means that $R_V$ respects the decomposition into harmonics on the link. The method of proof from Theorem XIII.56 of \cite{RSv4} proves that for $V$ sufficiently decaying, a radial operator has no positive eigenvalues.  This relies on a formal analysis of Jost solutions, proving that no linear combination of them can be an $L^2$ function.   In particular, if we assume we have an embedded eigenvalue at energy $\lambda^2$ with corresponding radial eigenfunction $\phi$ on the $j$th harmonic, it satisfies
\[
\left(  \partial_r^2 + \frac{n-1}{r} \partial_r + \lambda^2 - \frac{\mu_j^2}{r^2} \right) \phi = 0 
\]
meaning that $\infty$ is a regular singular point.  As a result, as $r \to \infty$ the Jost solutions are of the form
\[
\phi_\pm (r) = e^{\pm i \lambda  r} r^{-\frac{n-1}{2}} + O(r^{\frac{n-3}{2}} ).
\]
Hence, linear combinations generically take the form
\[
A \sin (\lambda r + \theta )  r^{-\frac{n-1}{2}} ,
\]
which is easily seen to not be in $L^2 (C(X))$, which yields a contradiction. Further discussion of the Jost solutions can be computed as in \cite{regge1965potential}, Ch. 5.

\end{proof}

\subsection{Absence of Embedded Resonances for Non-Radial Potentials}

For the purposes of ruling out embedded resonances, a geometrically robust approach is to use a boundary pairing formula on radially compactified space, as in \cite[\S 2.3]{melrose1995geometric}, to prove that an embedded resonance is an embedded eigenvalue and hence has at least $L^2$ decay. The boundary pairing formula for Schr\"odinger operators on Euclidean space is derived from the observation that a pole of the resolvent corresponds to a solution to 
\[(-\Delta_{\R^n} +V - \lambda^2)u = f,\]
say for $f \in \mathcal{S}$ a Schwartz class function. The solution to this equation takes the form 
\[
u = e^{i \frac{\lambda}{x}} x^{\frac12 (n-1)} w_+ + e^{-i \frac{\lambda}{x}} x^{\frac12 (n-1)} w_- , \ \ w_\pm \in C^\infty (\mathbb{S}^n_+)
\]
where $x = \frac{1}{r}$ and $\mathbb{S}^n_+$ is the upper hemisphere of the sphere $\mathbb{S}^n$. The boundary pairing formula states that for solutions
 \[
(-\Delta + V - \lambda^2 ) u^{(\ell)} = f^{(\ell)}
\] 
with $\ell = 1,2$, we have
\[
2 i \lambda \int_{\mathbb{S}^{n-1}} (v_+^{(1)} \overline{v_+^{(2)}} - v_-^{(1)} \overline{v_-^{(2)}}) dz = \int_{\mathbb{R}^n} ( f^{(1)} \overline{u^{(2)}} + u^{(1)} \overline{f^{(2)}} ) dz
\]
with $v_{\pm}^{(\ell)} = w_{\pm}^{(\ell)} |_{\partial \mathbb{S}_+^n}$.  For a pole of the outgoing resolvent, we observe that $v_+$ vanishes identically.

Hence, any embedded resonance can be seen to be an embedded eigenvalue.  To eliminate embedded eigenvalues, we follow the work of Froese et al \cite{froese1982absence} to prove that $L^2$ eigenfunctions must in fact exhibit super-polynomial decay.  The arguments there involve constructing a series of positive commutator arguments to obtain this rapid decay.  To begin, take $\varepsilon, \gamma  > 0$ and define $\rho(|x|) = \langle x \rangle$.  Then, the function 
\[
F (x) = \gamma \ln ( \rho (1 + \varepsilon \rho)^{-1}).
\]
Then, $\nabla F = x g$ for $g = \gamma \rho^{-2} (1 + \varepsilon \rho)^{-1}$ and 
\[
(x \cdot \nabla)^2 g - (x \cdot \nabla )(\nabla F)^2 \leq 4 \gamma (\gamma +2) \rho^{-2}.
\]

Defining $\psi_R = e^F \psi$, a conjugated form of the equation can be written as a modified quadratic form.  Coupling this form with a Mourre estimate (positive commutator using $r \partial r$ (the radial version of $x \cdot \nabla$) we can prove that the set of polynomial weights for which $\psi \in L^2$ is open and can be extended to $\infty$.  The Mourre estimate serves as a means to construct the weak limit of the resolvent at the real axis.

Once super-polynomial behavior is established, a similar open set for exponential decay can be established using the assumption that $e^{\alpha_0 \rho} \rho^\lambda \psi \in L^2$ for all $\lambda$ and showing that this implies then that $e^{(\alpha_0 + \gamma) \rho} \psi \in L^2$.  To do this, build the function
\[
F(x) = \alpha_0 \rho + \lambda \ln (1 + \gamma \rho/\lambda)
\]
and derive a similar contradiction.  

Once super-exponential decay is established, the strategy of Vasy-Wunsch \cite{VWsurexp} can be applied to prove a unique continuation argument by conjugating the operator to
\[
P_\alpha = e^{\alpha r'} (-\Delta + V - \lambda) e^{-\alpha r'} 
\]
for $r'$ some smoothed version of $r$ to be determined.  Then, 
\[
0 = \| P_\alpha \phi_\alpha \|^2 = \| \Re P_\alpha \psi_\alpha \|^2 + \| \Im P_\alpha \psi_\alpha \|^2 + \langle i [ \Re P_\alpha, \Im P_\alpha] \psi_\alpha, \psi_\alpha \rangle.
\]
Hence, one uses that $ i [ \Re P_\alpha, \Im P_\alpha] $ is a positive commutator term.  
We require that
\[
[ \Delta, 2 \partial_r + (\partial_r  \log A)] \geq c \frac{1}{r^2} \Delta_{\theta} + R
\]
for $R$ in the calculus of first order conic vector fields.

\subsection{The Boundary Pairing Formula}

Following a suggestion of Dean Baskin, we can interpret embedded resonances for more general conic Schr\"odinger operators through the boundary pairing formula of Melrose.  

\subsubsection{Existence of the Boundary Pairing Formula on Cones}

We outline the necessary generalizations to the presentation of the Boundary Pairing formula from the book of Melrose \cite{melrose1995geometric}.  First, we must consider the radial compactification of a cone to a compactified manifold $\hat C = X \times [0,1]$ for $X$ the link of the cone, which is similar to that in \cite{baskin2019radiation}.   We again consider solutions of the equation
\[(-\Delta_{\R^n} +V - \lambda^2)u =f\]
for $f \in \mathcal{S}$ for instance say a Schwartz class function.

Let 
\[
u = u_+  + u_- ,
\]
namely a sum of the outgoing and incoming solutions,
with 
\begin{equation}
    \label{iosolns}
u_{\pm} = e^{\pm i \lambda/x} x^{\frac{n-1}{2}}  w_\pm
\end{equation}
for $w_{\pm} \in C^\infty (\hat C)$.  This formula appears in a variety of settings in the literature, starting with the foundational work of Melrose \cite{melrose1994spectral} on asymptotically Euclidean manifolds, then Melrose-Zworski \cite{MelroseZworski1996} on general scattering manifolds with smooth boundary, Hassell-Vasy \cite{hassellvasy1999} on scattering manifolds with conic points, and the corresponding discussion of Guillarmou-Hassell-Sikora  in \cite{guillarmou2013resolvent}, Section $5$.  See also \cite{baskin2019scattering,yang2020diffraction} for a recent discussion on product cones that contains formulae from which such a decomposition can be obtained.

Then, we claim that
\begin{equation}
    \label{bpformula}
2 i \lambda \int_X [  v^{(1)}_+ \overline{v^{(2)}_+} - v^{(1)}_- \overline{v^{(2)}_-} ] dv_h = \int_{C(X)}  [  f^{(1)} \overline{u^{(2)}} - u^{(1)} \overline{f^{(2)}} ] dr dv_h
\end{equation}
with $v^{(i)}_{\pm} = w^{(i)}_{\pm} |_X$.  We will need to consider behaviors both at $x=0$ and $1$ whereas on Euclidean space the compactification really only sees $\infty$.  See Ch. $2.3$ and Ch. $6$ of Melrose \cite{melrose1995geometric}.  Compactify via the stereographic projection to the quarter circle 
$$S^1_{+,+} = \{ (z_1, z_2 ) \in S^1 \subset \RR^2 | z_1 \geq 0, z_2 \geq 0 \}$$ 
with $r \to (r,1)/\sqrt{1 + r^2}$. This is a manifold with boundaries of the form
\[ z_1 = r/\sqrt{1 + r^2}, \ \ z_2  = 1/\sqrt{1 + r^2}.\]

\noindent Define $M = S^1_{+,+} \times X$ and we get the compactified structure.  

The formula \eqref{bpformula} then follows from integration by parts on the expression
\[
 \int_{C(X)}  [  f^{(1)} \overline{u^{(2)}} - u^{(1)} \overline{f^{(2)}} ] \chi(\ve r) dr dv_h
\]
for $\chi$ a smooth cut-off function localized near $0$.  Integrating by parts in $r$, applying formula \eqref{iosolns}, and taking the limit as $\ve \to 0$ the formula follows after an application of the Riemann-Lebesgue lemma.

Note, the boundary pairing formula is strongly related to structure of the Jost solutions through the existence of polyhomogeneous expansions (power series solutions near the boundary).

\subsubsection{Outline of the remaining arguments}

To prove the absence of embedded resonances, we use the boundary pairing formula with $f=0$ to prove that any outgoing solution vanishes to leading order on the $x_1=0$ boundary.  This may be iterated to show that in fact the power series vanishes to arbitrary order and hence the solution is indeed Schwarz on the cone.  To see this, differentiate the equation with respect to $x_1$ and look at the resulting inhomogeneous equation in the boundary pairing.  Otherwise, the power series solution depends uniquely on the first terms in the expansion.

In most circumstances, eliminating embedded eigenvalues requires unique continuation.  In the radial problem, we may apply the Theorem XIII.56 of \cite{RSv4} built around Jost solutions as mentioned above.  For conic metrics, we must follow the procedure of Froese-Herbst \cite{froese1982absence} built around seperable metric, which has been extended and formalized by Vasy \cite{vasy1997propagation,vasy2004exponential}.

\section{Construction of the Free Resolvent}
\label{app:free_res}
\noindent In this appendix, we provide a detailed construction integral kernel for the free resolvent operator 
\begin{equation}
R_0(z^2) = (-\Delta_{C(X)} - z^2)^{-1}: L^2(C(X))\to L^2(C(X)),
\end{equation}
for $\Im z \ne 0,$ closely following the exposition of \cite{baskin2019scattering}. This is equivalent to analyzing solutions of the equation
\begin{equation}\label{free_resolvent}
(-\Delta_{C(X)} - z^2)u = f
\end{equation}
for $f\in L^2(C(X)).$ To proceed, we decompose $u$ and $f$ into the basis $\{\varphi_j\}$ of eigenfunctions on $X$ as
$$f(r,\theta) = \sum\limits_{j=1}^\infty f_j(r)\varphi_j(\theta), \quad u(r,\theta) = \sum\limits_{j=1}^\infty u_j(r)\varphi_j(\theta).$$ 
Denote by $-\mu_j^2$ the eigenvalues of $\Delta_h$ associated to each $\varphi_j$. Then, we obtain that \eqref{free_resolvent} is equivalent to the collection of equations
\begin{equation}\lp\partial_r^2 + \frac{n-1}{r}\partial_r +z^2 - \frac{\mu_j^2}{r^2}\rp u_j(r) = -f_j(r) ,\hskip 0.2in j = 0,1,2,\dotsc.\label{jth_resolvent_eqn}
\end{equation}
Therefore, we can express the resolvent $R_0(z^2)$ as 
\[R_0(z^2)f(r,\theta) = \sum\limits_{j=0}^\infty u_j(r)\varphi_j(\theta),\]
with $u_j$ as above. If we define the $j$th \textit{radial resolvent} $R_{0,j}(z^2)$ by
\begin{equation}\label{radial_resolvents}
R_{0,j}(z^2) = \lp\partial_r^2 + \frac{n-1}{r}\partial_r + z^2 - \frac{\mu_j^2}{r^2}\rp^{-1}
\end{equation}
as an operator on $L^2(\R^+,r^{n-1}\,dr)$, then the full resolvent is given by
\[R_0(z^2)f(r,\theta) = \sum\limits_{j=0}^\infty R_{0,j}(z^2)f_j(r)\varphi_j(\theta).\]
For each $j$, the defining equation \eqref{jth_resolvent_eqn} for $R_{0,j}(z^2)f_j$ is an ODE with a regular singular point at zero, and so by applying the Frobenius method we find that the indicial roots of the equation are $-\frac{n-2}{2} \pm \sqrt{\lp\frac{n-2}{2}\rp^2 + \mu_j^2}.$ For this reason, we introduce the notations $\delta = -\frac{n-2}{2}$ and $\nu_j = \sqrt{\lp\frac{n-2}{2}\rp^2  +\mu_j^2} $. The structure of the indicial roots suggests that we rescale by $r^{\delta}$, and so we define $\omega_j$ by $u_j(r) = r^{\delta}\omega_j(r)$ so that $\omega_j$ is analytic near $r = 0.$ Then, \eqref{jth_resolvent_eqn} becomes
\[\partial_r^2\omega_j + \frac{1}{r}\partial_r\omega_j  + \left( z^2 - \frac{\nu_j^2}{r^2}\right)\omega_j = -r^{-\delta}f_j(r), \hskip 0.4in j = 0,1,2,\dotsc.\]
At this point it is helpful to restrict to particular class of $f_j$, namely those for which the Fourier transform $\widehat f_j$ is compactly supported (in order to compute the Fourier transform, we simply extend $f_j$ by zero to a function on all of $\R$). For such $f_j$, we know that there exists a holomorphic extension to all of $\C$ by the Paley-Weiner-Schwartz Theorem. We continue to denote this extension by $f_j.$ That we can make this restriction without loss of generality follows from the fact that such functions are dense in $L^2.$ Given this, if $z\ne 0$, we make the change variables via $\zeta = zr$ to obtain the following inhomogeneous Bessel equation of order $\nu_j:$
\begin{equation}\label{Bessel_eqn}
\wt\omega_j'' + \frac{1}{\zeta}\wt\omega_j'  + \left( 1 - \frac{\nu_j^2}{\zeta^2}\right)\wt\omega_j = -\frac{\zeta^{-\delta}}{z^2}f_j(\zeta/z),
\end{equation}
where $\wt\omega_j(\zeta) = \omega_j(\zeta/z)$, and the ``prime" notation denotes the complex derivative with respect to $\zeta$. Here, we define $\zeta^{-\delta}$ using the principal branch of the square root. For notational convenience, we define $f_{j,z}(\zeta):= -\frac{\zeta^{-\delta}}{z^2}f_j(\zeta/z)$, which is holomorphic for ${\zeta\in\C\setminus(-\infty,0].}$

The solutions to the homogeneous Bessel equation of order $\nu$ are the well-known Bessel functions of the first and second kind, denoted $J_{\nu}$ and $Y_{\nu}$, respectively. Closely related to these are the Hankel functions $H_{\nu}^{(1)}$ and $H_{\nu}^{(2)}$, given by 
\[\Hone = J_{\nu} + i Y_\nu, \hskip 0.3in \Htwo = J_\nu - i Y_\nu.\]
Any two of these Bessel and/or Hankel functions can be used to form a fundamental solution set for the homogeneous equation. Given an appropriate choice of fundamental solution set, we use the method of variation of parameters to construct solutions to the inhomogeneous problem. So let $y_1,y_2$ be a fundamental solution set for the homogeneous problem associated to \eqref{Bessel_eqn} for some fixed $j$. We then construct our solution $\wt\omega_j$ as 
\[\wt\omega_j = v_1y_1 + v_2y_2\]
where all objects above are functions of $\zeta$. Straightforward calculations show that if
\[v_1'(\zeta) = -\frac{y_2(\zeta)f_{j,z}(\zeta)}{\mathscr W(y_1,y_2)(\zeta)}, \hskip 0.3in \text{and }\hskip 0.3in v_2'(\zeta) =  \frac{y_1(\zeta)f_{j,z}(\zeta)}{\mathscr W(y_1,y_2)(\zeta)},\]
then $\wt\omega_j$ as given above solves the inhomogeneous equation \eqref{Bessel_eqn}, where $\mathscr W(y_1,y_2)(\zeta)$ denotes the Wronskian determinant of $y_1$ and $y_2$ evaluated at $\zeta$. Therefore, we may compute $v_1$ and $v_2$ by taking path integrals in the complex plane, which yields 
\[\wt\omega_j(\zeta) = \lp\int_{\mathscr C_1(\zeta)} -\frac{y_2(\xi)f_{j,z}(\xi)}{\mathscr W(y_1,y_2)(\xi)}\,d\xi\rp y_1(\zeta) + \lp\int_{\mathscr C_2(\zeta)} \frac{y_1(\xi)f_{j,z}(\xi)}{\mathscr W(y_1,y_2)(\xi)}\,d\xi\rp y_2(\zeta)\]
where $\mathscr C_1(\zeta)$, $\mathscr C_2(\zeta)$ are any complex contours connecting fixed points $c_1,c_2\in \C\setminus(-\infty,0]$ to $\zeta$, respectively. In fact, it suffices to take $c_1,c_2\in\R^+$. We then choose our contours to be the piecewise linear paths defined by 
\[\mathscr C_1(\zeta) = \{(1-t)c_1 + t \Re \zeta: t\in[0,1]\}\cup \{\Re\zeta + it\Im\zeta : t\in [0,1]\}\]
and 
\[\mathscr C_2(\zeta) = \{(1-t)c_2 + t\Re \zeta:t\in [0,1]\}\cup \{\Re \zeta + it\Im\zeta : t\in [0,1]\}.\]
Of particular interest are the boundary values of the resolvent near the continuous spectrum of $-\Delta_{C(X)}+V$. Therefore, if we consider $z^2 = \lambda^2 \pm i\ve$, we have 
\begin{align*}
\wt\omega_j(zr) & = y_1(zr)\lp\int\limits_{c_1}^{\lambda r}\frac{-y_2(t)f_{j,z}(t)}{\mathscr W(y_1,y_2)(t)}\,dt  + i\int\limits_0^{\pm \ve r} \frac{-y_2(\lambda r + it)f_{j,z}(\lambda r + it)}{\mathscr W(y_1,y_2)(\lambda r+ it)}\,dt\rp \\
& \hskip 0.5in + y_2(zr) \lp\int\limits_{c_2}^{\lambda r}\frac{y_1(t)f_{j,z}(t)}{\mathscr W(y_1,y_2)(t)}\,dt + i\int\limits_0^{\pm \ve r}\frac{y_1(\lambda r + it)f_{j,z}(\lambda r + it)}{\mathscr W(y_1,y_2)(\lambda r + it)}\,dt\rp 
\end{align*}
All that remains is to determine that appropriate fundamental solution set $y_1$, $y_2$ and constants $c_1,c_2$ so that our solution is a well defined element of $L^2(C(X))$. If we take $y_2 = J_{\nu_j}$ and $c_1 = 0,$ then $\wt\omega_j$ is bounded as $r \to 0$, provided that the coefficient integrals converge. We then choose $y_1$ to be either $H_{\nu_j}^{(1)}$ or $H_{\nu_j}^{(2)}$, depending on the sign of $\Im z$. By the asymptotic forms of the Hankel functions, we have 
\[H_{\nu_j}^{(1)}(\zeta) \sim \sqrt{\frac{2}{\pi \zeta}} e^{i\lp \zeta - \frac{\nu_j\pi}{2} - \frac{\pi}{4}\rp}\]
and 
\[H_{\nu_j}^{(2)}(\zeta) \sim \sqrt{\frac{2}{\pi \zeta}} e^{-i \lp \zeta - \frac{\nu_j\pi}{2} - \frac{\pi}{4}\rp }\]
for $-\pi < \arg \zeta < \pi$, and the branch of the square root is defined by $\zeta^{1/2} = e^{\frac{1}{2}(\ln|\zeta| + i \arg \zeta)}$ for such $\zeta.$ We can now see that if $z^2 = \lambda^2 + i\ve$, then $\zeta = z r$ also has positive imaginary part, and so $H_{\nu_j}^{(1)}(zr)$ decays exponentially as $r\to \infty$, while $H_{\nu_j}^{(2)}$ exhibits exponential growth. Hence, when $z^2 = \lambda^2 + i\ve$ we take $y_1 = H_{\nu_j}^{(1)}$ and $c_2 = \infty$, which yields 
\begin{align*}
 \wt\omega_j(zr) & = H_{\nu_j}^{(1)}(zr)\lp\int\limits_{0}^{\lambda r}\frac{J_{\nu_j}(t)f_{j,z}(t)}{2i/(\pi t)}\,dt  + i\int\limits_0^{\pm \ve r} \frac{J_{\nu_j}(\lambda r + it)f_{j,z}(\lambda r + it)}{2i/[\pi(\lambda r+ it)]}\,dt\rp \\
& \hskip 0.5in + J_{\nu_j}(zr) \lp\int\limits_{\lambda r}^{\infty}\frac{H_{\nu_j}^{(1)}(t)f_{j,z}(t)}{2i/(\pi t)}\,dt - i\int\limits_0^{\pm \ve r}\frac{H_{\nu_j}^{(1)}(\lambda r + it)f_{j,z}(\lambda r + it)}{2i/[\pi(\lambda r +it )]}\,dt\rp,
\end{align*}
since $\mathscr W(H_{\nu_j}^{(1)},J_{\nu_j})(\xi) = -\frac{2i}{\pi \xi}$. We can then take the limit as $\ve \to 0$ to obtain
\[\wt\omega_j(\lambda r) =\frac{\pi}{2i}H_{\nu_j}^{(1)}(zr)\int\limits_{0}^{\lambda r}tJ_{\nu_j}(t)f_{j,z}(t)\,dt   + \frac{\pi}{2i}J_{\nu_j}(zr) \int\limits_{\lambda r}^{\infty}tH_{\nu_j}^{(1)}(t)f_{j,z}(t)\,dt.\]
Recalling that $u_j(r) = (zr)^\delta\wt\omega_j(zr)$ and $f_{j,z}(t) = -\frac{t^{\frac{n-2}{2}}}{z^2}f_j(t/z)$, we get that the outgoing solution corresponding to the $j$th resolvent is
\begin{align*}
  &  u_j(r) =  \frac{\pi i}{2}(\lambda r)^{-\frac{n-2}{2}}H_{\nu_j}^{(1)}(\lambda r)\int\limits_{0}^{\lambda r}\frac{t^{\frac{n}{2}}J_{\nu_j}(t)f_{j}(t/\lambda)}{\lambda^2}\,dt  \\
  & \hspace{2cm} + \frac{\pi i}{2}(\lambda r)^{-\frac{n-2}{2}}J_{\nu_j}(\lambda r) \int\limits_{\lambda r}^{\infty}\frac{t^{\frac{n}{2}}H_{\nu_j}^{(1)}(t)f_{j}(t/\lambda)}{\lambda^2}\,dt.
    \end{align*}
    If we then change variables via $t = \lambda s$, we can rewrite the above as
\[u_j( r) = \frac{\pi i}{2} r^{-\frac{n-2}{2}}H_{\nu_j}^{(1)}(\lambda r) \int\limits_0^{r} s^{\frac{n}{2}}J_{\nu_j}(\lambda s) f_j(s)\,ds + \frac{\pi i}{2}r^{-\frac{n-2}{2}}J_{\nu_j}(\lambda r)\int\limits_{r}^\infty s^{\frac{n}{2}}H_{\nu_j}^{(1)}(\lambda s)f_j(s)\,ds.\]
The integral kernel of $R_{0,j}(\lambda^2+i0)$ with respect to the measure $s^{n-1}\,ds$ is therefore given by 
\begin{equation}
\label{radial_resolvents_plus}R_{0,j}(\lambda^2+i0;r,s) = \begin{cases}
\frac{\pi i}{2}(rs)^{-\frac{n-2}{2}} J_{\nu_j}(\lambda s)H_{\nu_j}^{(1)}(\lambda r), & s < r\\
\frac{\pi i}{2}(rs)^{-\frac{n-2}{2}}J_{\nu_j}(\lambda r)H_{\nu_j}^{(1)}(\lambda s), & s > r,
\end{cases}
\end{equation}
since $s^{\frac{n}{2}} = s^{n-1}s^{-\frac{n-2}{2}}$.

We can repeat this analysis for $z^2 = \lambda^2 - i\ve$, and we find that we must take use $H_{\nu_j}^{(2)}$ instead of $H_{\nu_j}^{(1)}$ due to the asymptotic behavior at infinity, which also causes the Wronskian to change sign, but otherwise the calculations are identical. We therefore obtain
\begin{equation}\label{radial_resolvents_minus}
R_{0,j}(\lambda^2 - i0;r,s) = \begin{cases}
\frac{\pi }{2i}(rs)^{-\frac{n-2}{2}}J_{\nu_j}(\lambda s) H_{\nu_j}^{(2)}(\lambda r), & s < r\\
\frac{\pi }{2i}(rs)^{-\frac{n-2}{2}}J_{\nu_j}(\lambda r)H_{\nu_j}^{(2)}(\lambda s), & s > r.
\end{cases}.
\end{equation}

\begin{rmk}\textnormal{
We note that one could also obtain the formula for $R_{0,j}(\lambda^2 -i0)$ from that of $R_{0,j}(\lambda^2 + i0)$ by using the analytic continuation formulae
\[J_\nu(ze^{\pi i}) = e^{\nu\pi i}J_\nu(z)\quad \text{and}\quad H_{\nu}^{(1)}(ze^{-\pi i}) = -e^{-\nu \pi i}H_{\nu}^{(2)}(z).\]
}
\end{rmk}

Given \eqref{radial_resolvents_plus} and \eqref{radial_resolvents_minus}, we can express the imaginary part of the resolvent kernels $R_{0,j}$ as follows. 
\begin{lemma}\label{stone}
For $\lambda$ real, we have
\[\Imag R_{0,j}(\lambda^2+i0;r,s)= \frac{\pi }{2}(rs)^{-\frac{n-2}{2}}J_{\nu_j}(\lambda r)J_{\nu_j}(\lambda s)\]
as an integral kernel with respect to the measure $s^{n-1}ds$.
\end{lemma}

\begin{proof}
This follows immediately from the fact that 
\[H_{\nu_j}^{(1)} + H_{\nu_j}^{(2)} = (J_{\nu_j} + iY_{\nu_j}) + (J_{\nu_j} -i Y_{\nu_j}) = 2J_{\nu_j}.\]
\end{proof}

We can now write down an expression for the spectral measure of $-\Delta_{C(X)}$ as in \cite{Cheeger1979}, which follows from Stone's formula.

\begin{lemma}
For $\lambda$ real, 
\[\Imag R_{0}(\lambda^2+i0;x,y) =  \frac{\pi}{2}(rs)^{-\frac{n-2}{2}}\sum\limits_{j=0}^\infty J_{\nu_j}(\lambda r)J_{\nu_j}(\lambda s)\varphi_j(\theta_1)\overline{\varphi_j(\theta_2)} \]
where $x = (r,\theta_1)$ and $y = (s,\theta_2)$ are points in $C(X)$. Moreover, the absolutely continuous part of the spectral measure of $-\Delta_{C(X)}$, with the convention that $\lambda^2$ is the spectral parameter, is given by
\begin{align*}
d\Pi_0(\lambda;x,y) & = \frac{1}{2\pi i}\left[R_0(\lambda^2+i0;x,y)- R_0(\lambda^2 - i0;x,y)\right]2\lambda\,d\lambda \\
&= \sum\limits_{j=0}^\infty (rs)^{-\frac{n-2}{2}}J_{\nu_j}(\lambda r)J_{\nu_j}(\lambda s)\varphi_j(\theta_1)\overline{\varphi_j(\theta_2)}\lambda\,d\lambda.
\end{align*}
\end{lemma}

\section{Dispersive estimates for the free Schr\"odinger equation}
\label{ell2_dispersive}

Here, we discuss bounds on solutions of the unperturbed Schr\"odinger equation, given by
\begin{equation} \label{free_schrodinger} 
\begin{cases}
\lp \frac{1}{i}\partial_t  - \Delta_{C(X)}\rp u  = 0, \\
u|_{t=0} = f.
\end{cases}
\end{equation}
We prove that the solution to this equation satisfies a dispersive estimate analogous to \thmref{disp_est}, but without the need for projection onto the harmonics of the link, provided that the solution is measured in $L^\infty(\R^+;L^2(X))$, rather than simply $L^\infty (C(X))$. We do this by using a modification of the techniques outlined in \cite{Ford2010}, which handled flat two-dimensional cones, to obtain an explicit asymptotic formula for the kernel of $e^{it\Delta_{C(X)}}$ as a function of a rescaled variable.

\begin{theorem}\label{free_dispersive_bound}
Let $C(X) = \R^+\times X$ be the product cone on $X$, for $(X,h)$ a compact Riemannian manifold of dimension $n-1$. Then the solution to \eqref{free_schrodinger} satisfies
\[\|e^{it\Delta_{C(X)}}f\|_{L^{\infty}\lp\R^+; L^2(X)\rp} \le C t^{-\frac{n}{2}}\|f\|_{L^1(\R^+;L^2(X))}, \hskip 0.2in t > 0,\]
for some $C> 0.$ Here, $L^1(\R^+)$ is defined with respect to the measure $r^{n-1}\,dr.$
\end{theorem}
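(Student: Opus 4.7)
The plan is to reduce the estimate to a uniform bound on the radial kernels $K_j(t,r,s)$ appearing in the spectral decomposition of $e^{it\Delta_{C(X)}}$, and then to evaluate these kernels explicitly via the Weber-Schafheitlin identity, which converts the problem into a uniform bound on rescaled Bessel functions.

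First I would expand the initial data in the eigenbasis on the link, $f(s,\theta)=\sum_j f_j(s)\varphi_j(\theta)$. Using the spectral measure of $-\Delta_{C(X)}$ computed in \secref{free}, the propagator becomes
\[
e^{it\Delta_{C(X)}}f(r,\theta_1) = \sum_j \varphi_j(\theta_1)\int_0^\infty K_j(t,r,s)\,f_j(s)\,s^{n-1}\,ds,
\]
with
\[
K_j(t,r,s) = (rs)^{-\frac{n-2}{2}}\int_0^\infty e^{-it\lambda^2}J_{\nu_j}(\lambda r)J_{\nu_j}(\lambda s)\,\lambda\,d\lambda.
\]
Applying Parseval on $X$, so that $\|e^{it\Delta_{C(X)}}f(r,\cdot)\|_{L^2(X)}^2=\sum_j\big|\int_0^\infty K_j(t,r,s)f_j(s)s^{n-1}\,ds\big|^2$, and then Minkowski's inequality for integrals valued in $\ell^2$, reduces the theorem to the pointwise uniform bound
\[
\sup_{j\ge 0,\ r,s>0}\,|K_j(t,r,s)|\le C\,t^{-\frac{n}{2}}\qquad(t>0).
\]

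Next I would evaluate $K_j$ in closed form using the Weber-Schafheitlin formula
\[
\int_0^\infty e^{-a\lambda^2}J_\nu(\lambda r)J_\nu(\lambda s)\,\lambda\,d\lambda = \frac{1}{2a}\exp\lp -\frac{r^2+s^2}{4a}\rp I_\nu\lp\frac{rs}{2a}\rp,
\]
which holds for $\Re a>0$ and $\nu>-1$. Taking $a=\ve+it$ and passing to the limit $\ve\to 0^+$ (as in the treatment of flat $2$-cones in \cite{Ford2010}), and combining this with the identity $I_\nu(-iy)=i^{-\nu}J_\nu(y)$ for $y>0$, yields
\[
K_j(t,r,s) = \frac{i^{-\nu_j-1}}{2t}\,\exp\lp i\,\frac{r^2+s^2}{4t}\rp (rs)^{-\frac{n-2}{2}}J_{\nu_j}\lp\frac{rs}{2t}\rp.
\]
Introducing the rescaled variable $x=rs/(2t)$ and collecting powers of $t$, this becomes $|K_j(t,r,s)|=2^{-n/2}\,t^{-n/2}\cdot x^{-(n-2)/2}|J_{\nu_j}(x)|$, so the desired estimate is equivalent to the $j$- and $x$-uniform bound
\[
\sup_{\nu\ge(n-2)/2,\,x>0}\,x^{-\frac{n-2}{2}}|J_\nu(x)|<\infty.
\]

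Finally I would establish this uniform bound by splitting into two regimes. For $x\le 1$, the elementary series bound $|J_\nu(x)|\le (x/2)^\nu/\Gamma(\nu+1)$ gives $x^{-(n-2)/2}|J_\nu(x)|\le 2^{-\nu}x^{\nu-(n-2)/2}/\Gamma(\nu+1)\le 2^{-(n-2)/2}/\Gamma(n/2)$, using $\nu\ge(n-2)/2$ and the monotonicity of $2^{-\nu}/\Gamma(\nu+1)$ on $[(n-2)/2,\infty)$. For $x\ge 1$, the classical inequality $|J_\nu(x)|\le 1$ for $\nu\ge 0$ and $x\in\R$ gives $x^{-(n-2)/2}|J_\nu(x)|\le 1$. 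Combined, these yield $|K_j(t,r,s)|\le Ct^{-n/2}$ with $C$ depending only on $n$. The main technical delicacy I anticipate is the justification of the Weber-Schafheitlin evaluation at the purely imaginary parameter $a=it$, where the defining integral is only conditionally convergent; this is handled by the regularization-and-limit argument mentioned above, which requires only the uniform polynomial decay of $I_\nu$ on the imaginary axis to apply dominated convergence.
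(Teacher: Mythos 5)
Your proof is correct and reaches the same explicit kernel (via Weber's second exponential integral formula and the analytic continuation $I_\nu(iz)=i^\nu J_\nu(z)$) as the paper, but the way you dispose of the sum over link modes is genuinely simpler. The paper defines the rescaled kernel $S(x,\theta_1,\theta_2)=x^{-(n-2)/2}\sum_j i^{\nu_j}J_{\nu_j}(x)\varphi_j(\theta_1)\overline{\varphi_j(\theta_2)}$ and proves, in Proposition~\ref{small_x_asymptotics}, a \emph{pointwise} bound on $S(x,\theta_1,\theta_2)$ uniform in $\theta_1,\theta_2$ for small $x$; to sum the series it needs the $L^\infty$ eigenfunction estimate $\|\varphi_j\|_{L^\infty}\lesssim\mu_j^{(n-2)/2}$ together with the Weyl law, and only for $x\geq 1$ does it switch to orthogonality. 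You instead apply Parseval on $X$ from the outset, converting the $L^2(X)$-norm into an $\ell^2$-norm in $j$ and then using Minkowski's inequality for $\ell^2$-valued integrals; this reduces everything to $\sup_{\nu\geq(n-2)/2,\,x>0}x^{-(n-2)/2}|J_\nu(x)|<\infty$, which you establish by purely scalar Bessel estimates (series bound for $x\leq 1$, $|J_\nu(x)|\leq 1$ for $x\geq 1$). In effect you observe that the orthogonality argument the paper uses only for $x\geq 1$ in Corollary~\ref{S_L2_bound} in fact works for all $x$, since $x^{2-n}J_{\nu_j}^2(x)$ is uniformly bounded for $x\leq 1$ as well, so Proposition~\ref{small_x_asymptotics} (and the $L^\infty$ eigenfunction bounds and Weyl law it invokes) can be avoided entirely for the purpose of this theorem. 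The one place both you and the paper are a bit terse is the passage from $a=\varepsilon+it$ to $a=it$ in Weber's formula: the integral is only conditionally convergent on the boundary, so ``dominated convergence'' as you state it does not literally apply; the standard fix is to regard both sides as distributional (or improper) limits and invoke analytic continuation of both sides in $a$ in the half-plane $\Re a>0$ with a boundary-value argument, which is what the paper implicitly does. That is a minor technicality shared by both arguments and does not affect correctness.
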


\begin{rmk}\textnormal{
We note that this result is somewhat weaker than similar estimates obtained in \cite{zhang2018strichartz}, but we include it here because the proof is quite short and requires significantly less machinery. 
}
\end{rmk}

 Since $X$ is compact, there exists an orthonormal basis $\{\varphi_j\}_{j=0}^\infty$ of $L^2(X)$, satisfying 
 \[-\Delta_h\varphi_j = \mu_j^2\varphi_j\] 
 for $0 = \mu_0^2 < \mu_1^2 \le \dotsc$ repeated according to multiplicity. By the functional calculus of Cheeger \cite{Cheeger1979} discussed in Section \ref{free}, we can define the shifted eigenvalues 
\[\nu_j = \sqrt{\mu_j^2 + \lp\frac{n-2}{2}\rp^2},\]
in order to write the spectral measure of $-\Delta_{C(X)}$ as
\[d\Pi_0(r_1,\theta_1,r_2,\theta_2) = (r_1r_2)^{-\frac{n-2}{2}}\sum\limits_{j=0}^\infty  J_{\nu_j}(\lambda r_1)J_{\nu_j}(\lambda r_2)\varphi_j(\theta_1)\overline{\varphi_j(\theta_2)}\,\lambda\,d\lambda,\]
where $J_\nu$ is the Bessel function of the first kind of order $\nu.$ Hence, the fundamental solution to \eqref{free_schrodinger} has the form
\begin{equation}\label{fund_solution}
K_{e^{it\Delta_{C(X)}}}(r_1,\theta_1,r_2,\theta_2) = (r_1r_2)^{-\lp\frac{n-2}{2}\rp}\sum\limits_{j=0}^\infty\lp \int\limits_0^\infty e^{it\lambda^2}J_{\nu_j}(\lambda r_1)J_\nu(\lambda r_2)\lambda\,d\lambda\rp\varphi_j(\theta_1)\overline{\varphi_j(\theta_2)},
\end{equation}
with respect to the standard measure on the cone, $r^{n-1}\,dr\,dv_h(\theta)$, where $dv_h$ is the Riemannian volume measure on $X$. As in \cite{Ford2010}, we let $t = is$ in the above expression to obtain a formula for the heat kernel $e^{-s\Delta_{C(X)}}$. By Weber's second exponential integral formula, we have that
\[\int\limits_0^\infty e^{-s\lambda^2} J_\nu(\lambda r_1)J_\nu(\lambda r_2)\lambda\,d\lambda = \frac{1}{2s}e^{-\frac{r_1^2 + r_2^2}{4s}}I_\nu\lp\frac{r_1r_2}{2s}\rp,\]
where $I_\nu$ is the modified Bessel function of order $\nu,$ defined by 
\[I_\nu(x) = \sum\limits_{k=0}^\infty \frac{1}{k!\,\Gamma(\nu+k+1)}\lp\frac{x}{2}\rp^{2k+\nu}.\]
Analytic continuation in $s$ and taking $s = -it$ gives us
\[K_{e^{it\Delta_{C(X)}}}(r_1,\theta_1,r_2,\theta_2) = \frac{ie^{\frac{r_1^2 + r_2^2}{4it}}}{2t(r_1r_2)^{\frac{n-2}{2}}}\sum\limits_{j=0}^\infty i^{\nu_j}J_{\nu_j}\lp\frac{r_1r_2}{2t}\rp\varphi_j(\theta_1)\overline{\varphi_j(\theta_2)},\]
since $I_\nu(ix) = i^\nu J_\nu(x)$. For non-integer values of $\nu,$ we choose $z^\nu$ to have its branch cut along the negative real axis. For convenience, we define $x = \frac{r_1r_2}{2t}$ and let
\[S(x,\theta_1,\theta_2) = x^{-\lp\frac{n-2}{2}\rp}\sum\limits_{j=0}^\infty i^{\nu_j}J_{\nu_j}\lp x\rp\varphi_j(\theta_1)\overline{\varphi_j(\theta_2)},\]
so that 
\begin{equation}\label{K_S_relation}
K_{e^{it\Delta_{C(X)}}}(r_1,\theta_1,r_2,\theta_2) = \frac{i\exp\lp\frac{r_1^2 + r_2^2}{4it}\rp}{(2t)^{\frac{n}{2}}}S(x,\theta_1,\theta_2). 
\end{equation}
Furthermore, we define the family of operators $S(x):C^\infty(X)\to \mathcal D'(X)$ by
\[S(x)f(\theta_1) = \int\limits_X S(x,\theta_1,\theta_2)f(\theta_2)\,dv_h(\theta_2) = x^{-\lp\frac{n-2}{2}\rp}\sum\limits_{j=0}^\infty i^{\nu_j}J_{\nu_j}(x)\langle f,\varphi_j\rangle\varphi_j(\theta).\]

Next, we make note of an asymptotic expansion for $K_{e^{it\Delta_{C(X)}}}$ in the regime where $x\to 0$, which is analogous to \cite[Prop 4.1]{Ford2010}. 

\begin{proposition}\label{small_x_asymptotics}
The free Schr\"odinger propagator has the asymptotic behavior
\[K_{e^{it\Delta_{C(X)}}}(r_1,\theta_1,r_2,\theta_2) = \frac{i\exp\lp\frac{r_1^2 + r_2^2}{4it}\rp}{(2t)^{\frac{n}{2}}}\left[ \frac{(i/2)^{\frac{n-2}{2}}}{\Gamma(\frac{n}{2})\textnormal{vol}(X)} + \mathcal O\lp\lp\frac{r_1r_2}{2t}\rp^\alpha\rp\right],\hskip 0.2in \text{as }\frac{r_1r_2}{2t}\to 0,\]
where $\alpha = \min\{2,\nu_1 - \frac{n-2}{2}\}$.
\end{proposition}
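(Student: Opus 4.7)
The plan is to work directly with the expression
\[
S(x,\theta_1,\theta_2) = x^{-(n-2)/2}\sum_{j=0}^\infty i^{\nu_j}\, J_{\nu_j}(x)\,\varphi_j(\theta_1)\overline{\varphi_j(\theta_2)}
\]
from \eqref{K_S_relation} and show
\[
S(x,\theta_1,\theta_2) = \frac{(i/2)^{(n-2)/2}}{\Gamma(n/2)\,\vol(X)} + \mathcal O(x^\alpha)
\]
uniformly in $(\theta_1,\theta_2)\in X\times X$ as $x\to 0$; the stated asymptotic for $K_{e^{it\Delta_{C(X)}}}$ then follows by multiplying through by $i(2t)^{-n/2}\exp\bigl((r_1^2+r_2^2)/(4it)\bigr)$.

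First I would isolate the $j=0$ summand. Because $\mu_0=0$ and $(X,h)$ is (connected and) compact, the zeroth eigenfunction is the constant $\varphi_0 = \vol(X)^{-1/2}$ and $\nu_0=(n-2)/2$, so the $j=0$ term is
\[
\frac{x^{-(n-2)/2}\,i^{(n-2)/2}\,J_{(n-2)/2}(x)}{\vol(X)}.
\]
Plugging in the power series
\[
J_\nu(z) = \sum_{k=0}^\infty \frac{(-1)^k}{k!\,\Gamma(\nu+k+1)}\lp\frac{z}{2}\rp^{2k+\nu}
\]
with $\nu=(n-2)/2$ gives $(1/2)^{(n-2)/2}/\Gamma(n/2)\cdot\bigl(1-\tfrac{x^2}{2n}+\mathcal O(x^4)\bigr)$, so the $j=0$ summand contributes the expected constant plus an error of size $\mathcal O(x^2)$.

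Next I would estimate the tail $j\ge 1$. From the same power series, each term satisfies
\[
\bigl|x^{-(n-2)/2}\,J_{\nu_j}(x)\bigr| \le \frac{(x/2)^{\nu_j-(n-2)/2}}{\Gamma(\nu_j+1)}\bigl(1+\mathcal O(x^2)\bigr)
\]
for $0<x\le 1$. Since $\nu_j\ge \nu_1$ for $j\ge 1$, factoring out $x^{\nu_1-(n-2)/2}$ reduces the tail bound to the purely spectral estimate
\[
\sum_{j\ge 1} \frac{|\varphi_j(\theta_1)|\,|\varphi_j(\theta_2)|}{\Gamma(\nu_j+1)} \le C,
\]
uniformly in $\theta_1,\theta_2$. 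Combining H\"ormander's pointwise bound $\|\varphi_j\|_{L^\infty(X)} \lesssim (1+\mu_j)^{(n-2)/2}$ with Weyl's law $\mu_j \lesssim j^{1/(n-1)}$ on the compact manifold $X$, one has $|\varphi_j(\theta_1)\varphi_j(\theta_2)| \lesssim (1+\mu_j)^{n-2}$, while $\Gamma(\nu_j+1)$ grows super-exponentially in $j$. Thus the series converges absolutely and uniformly, producing an $\mathcal O(x^{\nu_1-(n-2)/2})$ contribution to $S$.

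Merging the two steps yields the overall error $\mathcal O(x^\alpha)$ with $\alpha = \min\{2,\,\nu_1-(n-2)/2\}$, as the $j=0$ term contributes $\mathcal O(x^2)$ and the $j\ge 1$ tail contributes $\mathcal O(x^{\nu_1-(n-2)/2})$. The main obstacle is ensuring the tail estimate is genuinely uniform in $(\theta_1,\theta_2)$ rather than merely $L^2$; this is where H\"ormander's sup-norm bound (or any comparably crude pointwise bound on the $\varphi_j$) is essential. Once that bound is in hand, the super-exponential decay coming from $1/\Gamma(\nu_j+1)$ trivializes the convergence, and the proposition follows.
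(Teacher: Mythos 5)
Your proposal is correct and follows essentially the same route as the paper's proof: isolate the $j=0$ term via $\varphi_0=\vol(X)^{-1/2}$, expand $J_{(n-2)/2}$ in its power series to extract the constant and an $\mathcal O(x^2)$ error, then bound the $j\ge 1$ tail by combining $|J_\nu(x)|\le (x/2)^\nu/\Gamma(\nu+1)$ with the H\"ormander sup-norm estimate $\|\varphi_j\|_{L^\infty}\lesssim \mu_j^{(n-2)/2}$ and Weyl's law, letting $1/\Gamma(\nu_j+1)$ dominate. The only cosmetic divergence is that the paper writes the tail coefficient as $\mu_j^{2(n-1)}$ while you (correctly, if one applies the H\"ormander bound directly) get $\mu_j^{n-2}$; either power is immaterial since convergence is driven by the $\Gamma(\nu_j+1)$ denominator, so this is not a substantive difference.
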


\begin{proof}
It suffices to show that 
\begin{equation}\label{S_asymptotics}
S(x,\theta_1,\theta_2) = \frac{(i/2)^{\frac{n-2}{2}}}{\Gamma(\frac{n}{2})\textnormal{vol}(X)}  + \mathcal O\lp x^{\alpha}\rp,\hskip 0.2in \text{as }x\to 0,
\end{equation}
uniformly in $\theta_1,\theta_2.$ Since the $\varphi_j$ are $L^2$-normalized, we have that $\varphi_0 = \frac{1}{\sqrt{\text{vol}(X)}}$, and so
\[S(x,\theta_1,\theta_2) = \frac{(i/2)^{\frac{n-2}{2}}}{\text{vol}(X)}\lp\frac{x}{2}\rp^{-\lp\frac{n-2}{2}\rp}J_{\frac{n-2}{2}}(x) +  x^{-\lp\frac{n-2}{2}\rp}\sum\limits_{j=1}^\infty i^{\nu_j}J_{\nu_j}(x)\varphi_j(\theta_1)\overline{\varphi_j(\theta_2)}.\]
By the standard power series representation for $J_{\frac{n-2}{2}}$, we have
\begin{align*}
& \left|S(x,\theta_1,\theta_2) - \frac{(i/2)^{\frac{n-2}{2}}}{\Gamma(\frac{n}{2})\textnormal{vol}(X)}\right|  = \\
& \hspace{.5cm} \left|\frac{(i/2)^{\frac{n-2}{2}}}{\text{vol}(X)}\sum\limits_{k=1}^\infty \frac{(-1)^k}{k!\,\Gamma(\frac{n}{2}+k)}\lp\frac{x}{2}\rp^{2k} + x^{-\lp\frac{n-2}{2}\rp}\sum\limits_{j=1}^\infty i^{\nu_j}J_{\nu_j}(x)\varphi_j(\theta_1)\overline{\varphi_j(\theta_2)}\right|.\\
\end{align*}
Using the bound $\left|J_\nu(x)\right|\le \frac{1}{\Gamma(\nu+1)}\lp\frac{x}{2}\rp^\nu$ for $x$ real and $\nu > 0$, along with the standard $L^\infty$ eigenfunction estimate $\|\varphi_j\|_{L^{\infty}} \le C \mu_j^{\frac{n-2}{2}}$, shows that for $0\le x < 2,$ we have
\begin{align}
\begin{split}\label{S_kernel_bound}
\left|S(x,\theta_1,\theta_2) - \frac{(i/2)^{\frac{n-2}{2}}}{\Gamma(\frac{n}{2})\textnormal{vol}(X)}\right| & \le \frac{1}{\text{vol}(X)}\sum\limits_{k=1}^\infty \lp\frac{x^2}{4}\rp^k + \sum\limits_{j=1}^\infty \frac{\mu_j^{2(n-1)}}{2^{\frac{n-2}{2}}\Gamma(\nu_j+1)}\lp\frac{x}{2}\rp^{\nu_j-\frac{n-2}{2}}\\
& \le \frac{x^2}{\text{vol}(X)(4-x^2)} + \lp\frac{x}{2}\rp^{\nu_1-\frac{n-2}{2}}\sum\limits_{j=1}^\infty\frac{\mu_j^{2(n-1)}}{\Gamma(\nu_j+1)}.
\end{split}
\end{align}
Note that $\mu_j\sim Cj^{\frac{1}{n-1}}$ by the Weyl law for the eigenvalues of $-\Delta_h$. Since $\nu_j \ge \mu_j$ for all $j,$ the summation in the last inequality of \eqref{S_kernel_bound} converges, which demonstrates \eqref{S_asymptotics} with $\alpha = \min\{2,\nu_1-\frac{n-2}{2}\}.$ We observe that by definition, $\nu_1 - \frac{n-2}{2} > 0.$ Hence, by \eqref{K_S_relation}, the proof is complete. 
\end{proof}

\begin{cor}\label{S_L2_bound}
The family of operators $S(x)$ satisfies
\[\|S(x)f\|_{L^2(X)} \le C\|f\|_{L^2(X)}\]
for all $x \ge 0$ and for some $C> 0$ which is uniform in $x$.
\end{cor}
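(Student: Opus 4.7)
The operator $S(x)$ is diagonalized by the orthonormal basis $\{\varphi_j\}$ of $L^2(X)$, acting by multiplication by $m_j(x) = x^{-(n-2)/2}i^{\nu_j}J_{\nu_j}(x)$ in the $\varphi_j$-coordinates. My plan is to reduce the claim to a uniform sup-norm bound on the multiplier sequence: namely,
\[
C := \sup_{j \ge 0,\, x > 0}\, x^{-(n-2)/2}|J_{\nu_j}(x)| < \infty.
\]
Given this, Parseval's identity immediately gives $\|S(x)f\|_{L^2(X)}^2 = \sum_{j=0}^\infty |m_j(x)|^2|\langle f,\varphi_j\rangle|^2 \le C^2\|f\|_{L^2(X)}^2$, and the boundary case $x=0$ is covered by the asymptotic formula in \propref{small_x_asymptotics}.

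To establish the uniform Bessel estimate I would split into two regimes. For $0 < x \le 2$, I invoke the sharp small-argument bound coming from the Poisson integral representation,
\[
|J_\nu(x)| \le \frac{(x/2)^\nu}{\Gamma(\nu+1)}, \qquad \nu \ge -\tfrac{1}{2},\ x \ge 0.
\]
Since $\nu_j \ge \tfrac{n-2}{2}$ for all $j$, this yields
\[
x^{-(n-2)/2}|J_{\nu_j}(x)| \le \frac{2^{-(n-2)/2}}{\Gamma(\nu_j+1)}\bigl(x/2\bigr)^{\nu_j - (n-2)/2} \le 2^{-(n-2)/2},
\]
uniformly in $j$ and $x \in (0,2]$. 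For $x \ge 2$, I would invoke the classical universal bound $|J_\nu(x)| \le 1$ valid for all $\nu \ge 0$ and $x \ge 0$ (a standard fact from Watson's treatise on Bessel functions). Combined with $x^{-(n-2)/2} \le 2^{-(n-2)/2}$ for $x \ge 2$ and $n \ge 2$, this yields the same constant on the high-frequency range, and taking the maximum proves the claim.

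The main point of difficulty is the uniformity in $j$. A naive approach using the large-argument asymptotic $J_\nu(x) \sim \sqrt{2/(\pi x)}$ for $x \gg \nu$ breaks down in the transition region $x \sim \nu_j$, which migrates to infinity as $j \to \infty$. This is why the universal bound $|J_\nu(x)| \le 1$ is essential: it holds across all regimes for $\nu \ge 0$ and $x \ge 0$ with no reference to any transition zone. Once this is in hand, Parseval cleanly produces a constant $C$ independent of both $x$ and $f$, and no further spectral information about the link manifold $X$ is needed beyond the orthonormality of $\{\varphi_j\}$ and the lower bound $\nu_j \ge (n-2)/2$.
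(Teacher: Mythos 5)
Your proof is correct, and for $x \ge 1$ it coincides with the paper's argument (Parseval plus the uniform bound $|J_\nu(x)| \le 1$ for $\nu \ge 0$). For the small-$x$ regime, however, you take a genuinely simpler route. The paper handles $0 \le x < 2$ by falling back on the kernel-level bound $|S(x,\theta_1,\theta_2)| \le C$ from Proposition~\ref{small_x_asymptotics} and then applying Cauchy--Schwarz; that kernel bound in turn relies on $L^\infty$ eigenfunction estimates and the Weyl law for $-\Delta_h$ to sum the series in $j$. You instead stay entirely in the multiplier picture and use the Poisson-integral bound $|J_\nu(x)| \le (x/2)^\nu/\Gamma(\nu+1)$ together with $\nu_j \ge (n-2)/2$, which gives uniformity in $j$ with no spectral information about $X$ at all. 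This is cleaner and more self-contained. One small slip: the final inequality $\le 2^{-(n-2)/2}$ quietly assumes $\Gamma(\nu_j+1) \ge 1$, which can fail when $\nu_0 = (n-2)/2 < 1$ (e.g.\ $n=3$, where $\Gamma(3/2) = \sqrt{\pi}/2 < 1$). The fix is trivial -- $\Gamma(\nu_j+1)$ is bounded below by the global minimum of $\Gamma$ on $[1,\infty)$, approximately $0.886$ -- so the uniform constant just needs an extra harmless factor; the conclusion is unaffected.
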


\begin{proof}
For $x < 2- \ve$ with $\ve > 0$, the estimate follows from the proof of Proposition \ref{small_x_asymptotics}, which shows that $|S(x,\theta_1,\theta_2)|\le C$ for some $C$ which is uniform in $\theta_1,\theta_2$. Thus, for such $x$,
\begin{align*}\|S(x)f\|_{L^2(X)}^2 & = \int\limits_X \left|\int\limits_X S(x,\theta_1,\theta_2)f(\theta_2)\,dv_h(\theta_2)\right|^2\,dv_h(\theta_1)\\
& \le \int\limits_X \|S(x,\theta_1,\cdot)\|_{L^2(X)}^2 \|f\|_{L^2(X)}^2\,dv_h(\theta_1)\\
& \le C^2\text{vol(X)}^2\|f\|_{L^2(X)}^2.
\end{align*}
For $x \ge 1,$ we can simply use the fact that $|J_\nu(x)|$ is bounded uniformly in both $x$ and $\nu$ to see that 
\[\|S(x)f\|_{L^2(X)}^2 = x^{2-n}\sum\limits_{j=0}^\infty J_{\nu_j}^2(x)\langle f,\varphi_j\rangle^2 \le C\sum\limits_{j=0}^\infty \langle f,\varphi_j\rangle^2 = C\|f\|_{L^2(X)}^2,\]
for some $C>0$, since $n \ge 2.$ 
\end{proof}

We are now ready to present the proof of the dispersive estimate in Theorem \ref{free_dispersive_bound}.
\begin{proof}[Proof of Theorem \ref{free_dispersive_bound}]
Recalling that $K_{e^{it\Delta_{C(X)}}}(r_1,\theta_1,r_2,\theta_2)$ is the Schwartz kernel of $e^{it\Delta_{C(X)}}$ and applying $\eqref{K_S_relation}$, we have that for any $r_1,t > 0$,
\begin{align*}
&\|e^{it\Delta_{C(X)}}f(r_1,\cdot)\|_{L^2(X)}^2 \\
&= \int\limits_X\left|\int\limits_0^\infty\int\limits_X K_{e^{it\Delta_{C(X)}}}(r_1,\theta_1,r_2,\theta_2) f(r_2,\theta_2)r_2^{n-1}\,dv_h(\theta_2)\,dr_2\right|^2\,dv_h(\theta_1)\\
& = \int\limits_X \lp\int\limits_0^\infty \frac{ie^{\lp\frac{r_1^2+r_2^2}{4it}\rp}}{(2t)^{\frac{n}{2}}}\left[S\lp\frac{r_1r_2}{2t}\rp f(r_2,\theta_1)\right] r_2^{n-1}\,dr_2\rp  \\
& \hspace{1.5cm}\times \overline{\lp\int\limits_0^\infty \frac{ie^{\lp\frac{r_1^2+r_3^2}{4it}\rp}}{(2t)^{\frac{n}{2}}}\left[S\lp\frac{r_1r_3}{2t}\rp f(r_3,\theta_1)\right] r_3^{n-1}\,dr_3\rp}\,dv_h(\theta_1)\\
&= (2t)^{-n}\int\limits_0^\infty\int\limits_0^\infty e^{\lp\frac{r_2^2-r_3^2}{4it}\rp}\lp \int\limits_X S\lp\frac{r_1r_2}{2t}\rp f(r_2,\theta_1)\overline{S\lp\frac{r_2r_3}{2t}\rp f(r_3,\theta_1)}\,dv_h(\theta_1)\rp\,(r_2r_3)^{n-1} \,dr_2\,dr_3\\
&  \le (2t)^{-n}\int\limits_0^\infty\int\limits_0^\infty \left\|S\lp\frac{r_1r_2}{2t}\rp f(r_2,\cdot)\right\|_{L^2(X)}\left\|S\lp\frac{r_1r_3}{2t}\rp f(r_3,\cdot)\right\|_{L^2(X)}\,(r_2r_3)^{n-1} \,dr_2\,dr_3.\\
\end{align*}
In the last inequality, we are able to omit the complex exponential factor by taking absolute values, since the integral is known to be real-valued and non-negative. By Corollary \ref{S_L2_bound}, the above is bounded by 
\[C t^{-n}\int\limits_0^\infty\int\limits_0^\infty \|f(r_2,\cdot)\|_{L^2(X)}\|f(r_3,\cdot)\|_{L^2(X)}\,(r_2r_3)^{n-1}\,dr_2\,dr_3 = Ct^{-n}\|f\|_{L^1(L^2(X),r^{n-1}\,dr)}^2, \]
for some $C>0$ which is independent of $r_1$. Taking square roots completes the proof of \thmref{free_dispersive_bound}.
\end{proof}

\bibliographystyle{abbrv}
\bibliography{MMT-bib1}

\def\cprime{$'$} \def\cftil#1{\ifmmode\setbox7\hbox{$\accent"5E#1$}\else
  \setbox7\hbox{\accent"5E#1}\penalty 10000\relax\fi\raise 1\ht7
  \hbox{\lower1.15ex\hbox to 1\wd7{\hss\accent"7E\hss}}\penalty 10000
  \hskip-1\wd7\penalty 10000\box7}
\begin{thebibliography}{10}

\bibitem{baskin2016locating}
D.~Baskin and J.~L. Marzuola.
\newblock Locating resonances on hyperbolic cones.
\newblock {\em arXiv preprint arXiv:1608.05278}, 2016.

\bibitem{baskin2019radiation}
D.~Baskin and J.~L. Marzuola.
\newblock The radiation field on product cones.
\newblock {\em arXiv preprint arXiv:1906.04769}, 2019.

\bibitem{baskin2019scattering}
D.~Baskin and M.~Yang.
\newblock Scattering resonances on truncated cones.
\newblock {\em arXiv preprint arXiv:1903.02654}, 2019.

\bibitem{beceanu2012schrodinger}
M.~Beceanu and M.~Goldberg.
\newblock Schr{\"o}dinger dispersive estimates for a scaling-critical class of
  potentials.
\newblock {\em Communications in Mathematical Physics}, 314(2):471--481, 2012.

\bibitem{blair2012strichartz}
M.~D. Blair, G.~A. Ford, S.~Herr, and J.~L. Marzuola.
\newblock Strichartz estimates for the schr{\"o}dinger equation on polygonal
  domains.
\newblock {\em Journal of Geometric Analysis}, 22(2):339--351, 2012.

\bibitem{blair2013strichartz}
M.~D. Blair, G.~A. Ford, and J.~L. Marzuola.
\newblock Strichartz estimates for the wave equation on flat cones.
\newblock {\em International Mathematics Research Notices}, 2013(3):562--591,
  2013.

\bibitem{blair2019quasimode}
M.~D. Blair, Y.~Sire, and C.~D. Sogge.
\newblock Quasimode, eigenfunction and spectral projection bounds for
  schr{\"o}dinger operators on manifolds with critically singular potentials.
\newblock {\em The Journal of Geometric Analysis}, pages 1--38, 2019.

\bibitem{borthwick2015dispersive}
D.~Borthwick and J.~L. Marzuola.
\newblock Dispersive estimates for scalar and matrix schr{\"o}dinger operators
  on $\mathbb{H}^{n+1}$.
\newblock {\em Mathematical Physics, Analysis and Geometry}, 18(1):22, 2015.

\bibitem{bouclet2018sharp}
J.-M. Bouclet and N.~Burq.
\newblock Sharp resolvent and time decay estimates for dispersive equations on
  asymptotically {E}uclidean backgrounds.
\newblock {\em arXiv preprint arXiv:1810.01711}, 2018.

\bibitem{Cheeger1979}
J.~Cheeger.
\newblock On the spectral geometry of spaces with cone-like singularities.
\newblock {\em Proceedings of the National Academy of Sciences},
  76(5):2103--2106, 1979.

\bibitem{cheeger1982diffraction}
J.~Cheeger and M.~Taylor.
\newblock On the diffraction of waves by conical singularities. {I}.
\newblock {\em Communications on Pure and Applied Mathematics}, 35(3):275--331,
  1982.

\bibitem{chen2020semiclassical}
X.~Chen.
\newblock The semiclassical resolvent on conic manifolds and application to
  {S}chr{\"o}dinger equations.
\newblock {\em arXiv preprint arXiv:2009.12895}, 2020.

\bibitem{donninger2012pointwise}
R.~Donninger, W.~Schlag, and A.~Soffer.
\newblock On pointwise decay of linear waves on a schwarzschild black hole
  background.
\newblock {\em Communications in Mathematical Physics}, 309(1):51--86, 2012.

\bibitem{erdougan2013dispersive}
M.~Erdo{\u{g}}an and W.~Green.
\newblock Dispersive estimates for {S}chr{\"o}dinger operators in dimension two
  with obstructions at zero energy.
\newblock {\em Transactions of the American Mathematical Society},
  365(12):6403--6440, 2013.

\bibitem{erdougan2014dispersive}
M.~B. Erdo{\u{g}}an, M.~Goldberg, and W.~R. Green.
\newblock Dispersive estimates for four dimensional {S}chr{\"o}dinger and wave
  equations with obstructions at zero energy.
\newblock {\em Communications in Partial Differential Equations},
  39(10):1936--1964, 2014.

\bibitem{erdougan2010dispersive}
M.~B. Erdo{\u{g}}an and W.~R. Green.
\newblock Dispersive estimates for the {S}chr{\"o}dinger equation for $
  c^{\frac{n-3}2}$ potentials in odd dimensions.
\newblock {\em International Mathematics Research Notices},
  2010(13):2532--2565, 2010.

\bibitem{erdougan2013weighted}
M.~B. Erdo{\u{g}}an and W.~R. Green.
\newblock A weighted dispersive estimate for schr{\"o}dinger operators in
  dimension two.
\newblock {\em Communications in Mathematical Physics}, 319(3):791--811, 2013.

\bibitem{ES1}
M.~B. Erdo{\u{g}}an and W.~Schlag.
\newblock Dispersive estimates for {S}chr\"odinger operators in the presence of
  a resonance and/or an eigenvalue at zero energy in dimension three. {II}.
\newblock {\em J. Anal. Math.}, 99:199--248, 2006.

\bibitem{Ford2010}
G.~A. Ford.
\newblock The fundamental solution and strichartz estimates for the
  schr{\"o}dinger equation on flat euclidean cones.
\newblock {\em Communications in Mathematical Physics}, 299(2):447--467, 2010.

\bibitem{ford2015wave}
G.~A. Ford, A.~Hassell, and L.~Hillairet.
\newblock Wave propagation on euclidean surfaces with conical singularities. i:
  Geometric diffraction.
\newblock {\em arXiv preprint arXiv:1505.01043}, 2015.

\bibitem{FrankSimon2017}
R.~Frank and B.~Simon.
\newblock Eigenvalue bounds for schr{\"o}dinger operators with complex
  potentials. ii.
\newblock {\em J. Spectr. Theory}, 7:633--658, 2017.

\bibitem{froese1982absence}
R.~Froese, I.~Herbst, M.~Hoffmann-Ostenhof, and T.~Hoffmann-Ostenhof.
\newblock On the absence of positive eigenvalues for one-body schr{\"o}dinger
  operators.
\newblock {\em Journal d’Analyse Math{\'e}matique}, 41(1):272--284, 1982.

\bibitem{germain2018nonlinear}
P.~Germain, F.~Pusateri, and F.~Rousset.
\newblock The nonlinear schr{\"o}dinger equation with a potential.
\newblock {\em Annales de l'Institut Henri Poincar{\'e} C, Analyse non
  lin{\'e}aire}, 35(6):1477--1530, 2018.

\bibitem{goldberg2006dispersive}
M.~Goldberg.
\newblock Dispersive bounds for the three-dimensional {S}chr{\"o}dinger
  equation with almost critical potentials.
\newblock {\em Geometric and Functional Analysis}, 16(3):517--536, 2006.

\bibitem{goldberg2006dispersivergh}
M.~Goldberg.
\newblock Dispersive estimates for the three-dimensional schr{\"o}dinge
  equation with rough potentials.
\newblock {\em American journal of mathematics}, 128(3):731--750, 2006.

\bibitem{goldberg2014dispersive}
M.~Goldberg and W.~R. Green.
\newblock Dispersive estimates for higher dimensional schr$\backslash$" odinger
  operators with threshold eigenvalues ii: The even dimensional case.
\newblock {\em arXiv preprint arXiv:1409.6328}, 2014.

\bibitem{goldberg2015dispersive}
M.~Goldberg and W.~R. Green.
\newblock Dispersive estimates for higher dimensional schr{\"o}dinger operators
  with threshold eigenvalues i: The odd dimensional case.
\newblock {\em Journal of Functional Analysis}, 269(3):633--682, 2015.

\bibitem{goldberg2016lp}
M.~Goldberg and W.~R. Green.
\newblock The lp boundedness of wave operators for schr{\"o}dinger operators
  with threshold singularities.
\newblock {\em Advances in Mathematics}, 303:360--389, 2016.

\bibitem{goldberg2004dispersive}
M.~Goldberg and W.~Schlag.
\newblock Dispersive estimates for schr{\"o}dinger operators in dimensions one
  and three.
\newblock {\em Communications in mathematical physics}, 251(1):157--178, 2004.

\bibitem{goldberg2006counterexample}
M.~Goldberg and M.~Visan.
\newblock A counterexample to dispersive estimates for schr{\"o}dinger
  operators in higher dimensions.
\newblock {\em Communications in mathematical physics}, 266(1):211--238, 2006.

\bibitem{green2012dispersive}
W.~R. Green et~al.
\newblock Dispersive estimates for matrix and scalar {S}chr{\"o}dinger
  operators in dimension five.
\newblock {\em Illinois Journal of Mathematics}, 56(2):307--341, 2012.

\bibitem{guillarmou2013resolvent}
C.~Guillarmou, A.~Hassell, and A.~Sikora.
\newblock Resolvent at low energy iii: the spectral measure.
\newblock {\em Transactions of the American Mathematical Society},
  365(11):6103--6148, 2013.

\bibitem{guillope1995upper}
L.~Guillop{\'e} and M.~Zworski.
\newblock Upper bounds on the number of resonances for non-compact {R}iemann
  surfaces.
\newblock {\em Journal of Functional Analysis}, 129(2):364--389, 1995.

\bibitem{hafner2019linear}
D.~H{\"a}fner, P.~Hintz, and A.~Vasy.
\newblock Linear stability of slowly rotating kerr black holes.
\newblock {\em arXiv preprint arXiv:1906.00860}, 2019.

\bibitem{hassellvasy1999}
A.~Hassell and A.~Vasy.
\newblock The spectral projections and the resolvent for scattering metrics.
\newblock {\em J. Anal. Math.}, 79:241--298, 1999.

\bibitem{hassell2016global}
A.~Hassell and J.~Zhang.
\newblock Global-in-time strichartz estimates on nontrapping, asymptotically
  conic manifolds.
\newblock {\em Analysis \& PDE}, 9(1):151--192, 2016.

\bibitem{hintz2020resolvents}
P.~Hintz.
\newblock Resolvents and complex powers of semiclassical cone operators.
\newblock {\em arXiv preprint arXiv:2010.01593}, 2020.

\bibitem{hintz2015semilinear}
P.~Hintz and A.~Vasy.
\newblock Semilinear wave equations on asymptotically de sitter, kerr--de
  sitter and minkowski spacetimes.
\newblock {\em Analysis \& PDE}, 8(8):1807--1890, 2015.

\bibitem{hintz2018global}
P.~Hintz, A.~Vasy, et~al.
\newblock The global non-linear stability of the kerr--de sitter family of
  black holes.
\newblock {\em Acta mathematica}, 220(1):1--206, 2018.

\bibitem{ifrim2015global}
M.~Ifrim and D.~Tataru.
\newblock Global bounds for the cubic nonlinear schr{\"o}dinger equation (nls)
  in one space dimension.
\newblock {\em Nonlinearity}, 28(8):2661, 2015.

\bibitem{journe1991decay}
J.-L. Journ{\'e}, A.~Soffer, and C.~D. Sogge.
\newblock Decay estimates for {S}chr{\"o}dinger operators.
\newblock {\em Communications on Pure and Applied mathematics}, 44(5):573--604,
  1991.

\bibitem{killip2015energy}
R.~Killip, C.~Miao, M.~Visan, J.~Zhang, and J.~Zheng.
\newblock The energy-critical nls with inverse-square potential.
\newblock {\em arXiv preprint arXiv:1509.05822}, 2015.

\bibitem{krieger2020stability}
J.~Krieger, S.~Miao, and W.~Schlag.
\newblock A stability theory beyond the co-rotational setting for critical wave
  maps blow up.
\newblock {\em arXiv preprint arXiv:2009.08843}, 2020.

\bibitem{krieger2008renormalization}
J.~Krieger, W.~Schlag, and D.~Tataru.
\newblock Renormalization and blow up for charge one equivariant critical wave
  maps.
\newblock {\em Inventiones mathematicae}, 171(3):543--615, 2008.

\bibitem{lawrie2018local}
A.~Lawrie, J.~Luhrmann, S.-J. Oh, and S.~Shahshahani.
\newblock Local smoothing estimates for schr$\backslash$" odinger equations on
  hyperbolic space.
\newblock {\em arXiv preprint arXiv:1808.04777}, 2018.

\bibitem{lawrie2019asymptotic}
A.~Lawrie, J.~Luhrmann, S.-J. Oh, and S.~Shahshahani.
\newblock Asymptotic stability of harmonic maps on the hyperbolic plane under
  the {S}chr{\"o}dinger maps evolution.
\newblock {\em arXiv preprint arXiv:1909.06899}, 2019.

\bibitem{lawrie2016cauchy}
A.~Lawrie, S.-J. Oh, and S.~Shahshahani.
\newblock The cauchy problem for wave maps on hyperbolic space in dimensions.
\newblock {\em International Mathematics Research Notices}, 2018(7):1954--2051,
  2016.

\bibitem{lawrie2016gap}
A.~Lawrie, S.-J. Oh, and S.~Shahshahani.
\newblock Gap eigenvalues and asymptotic dynamics of geometric wave equations
  on hyperbolic space.
\newblock {\em Journal of Functional Analysis}, 271(11):3111--3161, 2016.

\bibitem{lawrie2016profile}
A.~Lawrie, S.-J. Oh, and S.~Shahshahani.
\newblock Profile decompositions for wave equations on hyperbolic space with
  applications.
\newblock {\em Mathematische Annalen}, 365(1-2):707--803, 2016.

\bibitem{lawrie2017stability}
A.~Lawrie, S.-J. Oh, and S.~Shahshahani.
\newblock Stability of stationary equivariant wave maps from the hyperbolic
  plane.
\newblock {\em American Journal of Mathematics}, 139(4):1085--1147, 2017.

\bibitem{melrose2004propagation}
R.~Melrose and J.~Wunsch.
\newblock Propagation of singularities for the wave equation on conic
  manifolds.
\newblock {\em Inventiones mathematicae}, 156(2):235--299, 2004.

\bibitem{MelroseZworski1996}
R.~Melrose and M.~Zworski.
\newblock Scattering metrics and geodesic flow at infinity.
\newblock {\em Invent. Math.}, 124(1-3):389--436, 1996.

\bibitem{melrose1994spectral}
R.~B. Melrose.
\newblock Spectral and scattering theory for the laplacian on asymptotically
  euclidian spaces.
\newblock {\em Lecture Notes in Pure and Applied Mathematics}, pages 85--85,
  1994.

\bibitem{melrose1995geometric}
R.~B. Melrose.
\newblock {\em Geometric scattering theory}, volume~1.
\newblock Cambridge University Press, 1995.

\bibitem{miao2013strichartz}
C.~Miao, J.~Zhang, and J.~Zheng.
\newblock Strichartz estimates for wave equation with inverse square potential.
\newblock {\em Communications in Contemporary Mathematics}, 15(06):1350026,
  2013.

\bibitem{miao2014maximal}
C.~Miao, J.~Zhang, and J.~Zheng.
\newblock Maximal estimates for schr{\"o}dinger equations with inverse-square
  potential.
\newblock {\em Pacific Journal of Mathematics}, 273(1):1--19, 2014.

\bibitem{mizutani2019uniform}
H.~Mizutani, J.~Zhang, and J.~Zheng.
\newblock Uniform resolvent estimate for {S}chr\"odinger operator with an
  inverse-square potential.
\newblock {\em arXiv preprint arXiv:1903.05040}, 2019.

\bibitem{RSv4}
M.~Reed and B.~Simon.
\newblock {\em Methods of Modern Mathematical Physics IV. Analysis of
  Operators}.
\newblock Academic Press, 1978.

\bibitem{regge1965potential}
T.~Regge and V.~De~Alfaro.
\newblock {\em Potential scattering}.
\newblock 1965.

\bibitem{schlag2007dispersive}
W.~Schlag.
\newblock Dispersive estimates for schr{\"o}dinger operators: a survey.
\newblock {\em Mathematical aspects of nonlinear dispersive equations},
  163:255--285, 2007.

\bibitem{schlag2010decay1}
W.~Schlag, A.~Soffer, and W.~Staubach.
\newblock Decay for the wave and {S}chr{\"o}dinger evolutions on manifolds with
  conical ends, part {I}.
\newblock {\em Transactions of the American Mathematical Society},
  362(1):19--52, 2010.

\bibitem{schlag2010decay2}
W.~Schlag, A.~Soffer, and W.~Staubach.
\newblock Decay for the wave and {S}chr{\"o}dinger evolutions on manifolds with
  conical ends, part {II}.
\newblock {\em Transactions of the American Mathematical Society},
  362(1):289--318, 2010.

\bibitem{vasy1997propagation}
A.~Vasy.
\newblock {\em Propagation of singularities in three-body scattering}.
\newblock PhD thesis, Massachusetts Institute of Technology, 1997.

\bibitem{vasy2004exponential}
A.~Vasy.
\newblock Exponential decay of eigenfunctions in many-body type scattering with
  second-order perturbations.
\newblock {\em Journal of Functional Analysis}, 209(2):468--492, 2004.

\bibitem{vasy2021limiting}
A.~Vasy.
\newblock Limiting absorption principle on riemannian scattering
  (asymptotically conic) spaces, a lagrangian approach.
\newblock {\em Communications in Partial Differential Equations},
  46(5):780--822, 2021.

\bibitem{VWsurexp}
A.~Vasy and J.~Wunsch.
\newblock Absence of super-exponentially decaying eigenfunctions on riemannian
  manifolds with pinched negative curvature.
\newblock {\em arXiv preprint math/0411001}, 2004.

\bibitem{yang2020diffraction}
M.~Yang.
\newblock Diffraction and scattering on product cones.
\newblock {\em arXiv preprint arXiv:2004.07030}, 2020.

\bibitem{zhang2014scattering}
J.~Zhang and J.~Zheng.
\newblock Scattering theory for nonlinear schr{\"o}dinger equations with
  inverse-square potential.
\newblock {\em Journal of Functional Analysis}, 267(8):2907--2932, 2014.

\bibitem{zhang2017globalcone}
J.~Zhang and J.~Zheng.
\newblock Global-in-time strichartz estimates and cubic schrodinger equation on
  metric cone.
\newblock {\em arXiv preprint arXiv:1702.05813}, 2017.

\bibitem{zhang2017global}
J.~Zhang and J.~Zheng.
\newblock Global-in-time strichartz estimates for schr{\"o}dinger on scattering
  manifolds.
\newblock {\em Communications in Partial Differential Equations},
  42(12):1962--1981, 2017.

\bibitem{zhang2018strichartz}
J.~Zhang and J.~Zheng.
\newblock Strichartz estimates and wave equation in a conic singular space.
\newblock {\em arXiv preprint arXiv:1804.02390}, 2018.

\end{thebibliography}

\end{document}